\date{}
\title{\vspace{-0.8cm}Pancyclic subgraphs of random graphs}
\author{
Choongbum Lee \thanks{Department of Mathematics, UCLA, Los
Angeles, CA, 90095. Email: choongbum.lee@gmail.com. Research
supported in part by Samsung Scholarship.}
\and
Wojciech Samotij \thanks{Department of Mathematics,
University of Illinois, Urbana, IL, 61801, USA. E-mail address:
samotij2@illinois.edu. Research supported in part by Schark Fellowship and Parker Fellowship.} }
\newcommand{\abs}[1]{\lvert #1 \rvert}
\newcommand{\Ex}{\mathbb{E}}
\newcommand{\Pd}{{\cal P}_\delta}
\newcommand{\modulon}{[\underline{n}]}
\DeclareMathOperator{\Bi}{Bi}
\theoremstyle{plain}
\newtheorem{THM}{Theorem}[section]
\newtheorem{PROP}[THM]{Proposition}
\newtheorem{LEMMA}[THM]{Lemma}
\newtheorem{COR}[THM]{Corollary}
\newtheorem{CLAIM}[THM]{Claim}
\newtheorem*{REM}{Remark}
\theoremstyle{definition}
\newtheorem{DFN}[THM]{Definition}
\numberwithin{figure}{section}
\begin{document}
\maketitle

\begin{abstract}
An $n$-vertex graph is called pancyclic if it contains a cycle of length $t$
for all $3 \leq t \leq n$. In this paper, we study pancyclicity of random graphs
in the context of resilience, and prove that if $p \gg n^{-1/2}$, then
the random graph $G(n,p)$ a.a.s.~satisfies the following property: Every Hamiltonian subgraph
of $G(n,p)$ with more than $(\frac{1}{2} + o(1)){n \choose 2}p$ edges
is pancyclic. This result is best possible in two ways. First, the range of $p$ is asymptotically tight;
second, the proportion $\frac{1}{2}$ of edges cannot be reduced. Our theorem
extends a classical theorem of Bondy, and is closely related to a
recent work of Krivelevich, Lee, and Sudakov. The proof 
uses a recent result of Schacht (also independently obtained by Conlon and Gowers).
\end{abstract}
%An $n$-vertex graph is called pancyclic if it contains a cycle of length $t$
%for all $3 \leq t \leq n$. In this paper, we study pancyclicity of random graphs
%in the context of resilience, and prove the following theorem. If $p \gg n^{-1/2}$,
%then $G(n,p)$ a.a.s.~satisfies the following property: Every Hamiltonian subgraph
%of $G(n,p)$ with more than $(\frac{1}{2} + o(1)){n \choose 2}p$ edges
%is pancyclic. This result is best possible in two ways. First, the range of $p$ is asymptotically tight;
%second, the proportion $\frac{1}{2}$ of edges cannot be reduced. Our theorem
%extends a classical theorem of Bondy, and is closely related to a
%recent work of Krivelevich, Lee, and Sudakov.

\section{Introduction}
\label{section_introduction}

The problem of finding cycles of various lengths in graphs is one
of the main topics of study in extremal graph theory.
For a fixed odd integer $l$ with $l \geq 3$, Erd\H{o}s and Stone \cite{ErSt} proved
that an $n$-vertex graph with more than $(\frac{1}{2} + o(1)){n \choose 2}$ edges
contains a cycle of length $l$, and for an even integer $l$, Bondy and
Simonovits \cite{BoSi} proved that $c n^{1+2/l}$ edges suffice,
for some constant $c$ depending on $l$. It is remarkable that the
same graph parameter can behave so differently according to the parity of $l$.
When the length of the cycle grows with the order of the graph,
the most studied problem is that of finding a Hamilton cycle, i.e.,
a cycle passing through every vertex of the graph. This can be a very
difficult problem since determining whether a given graph is Hamiltonian is an NP-complete problem.
However, there are some simple sufficient conditions which force a graph
to be Hamiltonian. For example, Dirac (see, e.g., \cite{Diestel}) proved
that a graph on $n$ vertices with minimum degree at least
$\left\lceil \frac{n}{2} \right\rceil$ contains a Hamilton cycle.

An $n$-vertex graph is called {\em pancyclic} if it contains a cycle of length $t$ for all
$3 \leq t \leq n$. Clearly, every pancyclic graph is also Hamiltonian and the converse is not true.
Quite surprisingly though, many times a condition which forces a
graph to be Hamiltonian also forces it to be pancyclic.
For instance, Bondy~\cite{Bondy2} extended Dirac's theorem and
proved that a graph on $n$ vertices with minimum degree greater than
$\left\lceil \frac{n}{2} \right\rceil$ is pancyclic. In fact, he proved the
following stronger statement.

\begin{THM}
  \label{thm_Bondy}
  Every Hamiltonian graph with $n$ vertices and more than $\frac{1}{2}{n \choose 2}$ edges is pancyclic.
\end{THM}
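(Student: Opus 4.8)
The plan is to follow Bondy's original argument, which in fact establishes the sharp statement: a Hamiltonian graph on $n$ vertices with at least $\lfloor n^2/4\rfloor$ edges is pancyclic \emph{unless} it equals the balanced complete bipartite graph $K_{\lfloor n/2\rfloor,\lceil n/2\rceil}$ — the latter, being bipartite, missing every odd cycle, which is also why the fraction $\tfrac12$ cannot be lowered. So suppose $G$ is Hamiltonian but not pancyclic, fix a Hamilton cycle $C=v_1v_2\cdots v_nv_1$ (so that $G$ has more than $\tfrac12\binom n2-n$ chords of $C$), and let $\ell\in\{3,\dots,n-1\}$ be the \emph{largest} length for which $G$ has no cycle of length $\ell$; then $G$ contains a cycle $D$ of length $\ell+1$. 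The goal is to deduce $e(G)\le\lfloor n^2/4\rfloor$, with equality forcing $G=K_{\lfloor n/2\rfloor,\lceil n/2\rceil}$.

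The engine is that chords create cycles of controlled lengths. A chord $v_iv_j$ splitting $C$ into arcs of lengths $k$ and $n-k$ yields cycles of lengths $k+1$ and $n-k+1$; a chord of $D$ splitting it into arcs of lengths $a$ and $\ell+1-a$ yields cycles of lengths $a+1$ and $\ell+2-a$; and two crossing chords $v_av_c,v_bv_d$ of $C$ with $a<b<c<d$ splice (along the arcs $[v_a,v_b]$ and $[v_c,v_d]$) into a cycle of length $(b-a)+(d-c)+2$, together with a complementary cycle on the remaining arcs. First I would list the chord configurations that the absence of $C_\ell$ rules out: no chord of $C$ cuts off an arc of length $\ell-1$ (equivalently of length $n-\ell+1$); no chord of $D$ cuts off an arc of length $2$ or $\ell-1$; the ``spans'' of the chords incident to any single vertex avoid the difference $\ell-2$; and likewise for the various spliced configurations. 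Each excluded pattern bounds the number of chords of the corresponding type that $G$ may contain.

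The crux, and the step I expect to be the main obstacle, is to convert this list of \emph{local} exclusions into the \emph{sharp} bound $e(G)\le\lfloor n^2/4\rfloor$ and to pin down equality. Summing the ``difference-free neighbourhood'' constraint over all vertices gives only $e(G)=\tfrac12\sum_i\deg(v_i)\le\tfrac12\binom n2+O(n)$, the right order but with an extraneous additive $O(n)$; eliminating it requires playing the constraints on chords at different vertices against one another, followed by a case analysis on the cyclic pattern of the chords of $C$ to show that saturating the bound forces $V(G)$ to split into two parts spanning no internal edge — so $G\subseteq K_{a,n-a}$, whence $e(G)\le a(n-a)\le\lfloor n^2/4\rfloor$ with equality only for $a=\lfloor n/2\rfloor$ and $G$ complete bipartite. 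This contradicts $e(G)>\tfrac12\binom n2$ except when $G=K_{\lfloor n/2\rfloor,\lceil n/2\rceil}$, so that graph is the unique non-pancyclic Hamiltonian graph with this many edges, and the theorem follows. (For any \emph{fixed} $\ell$ the existence of $C_\ell$ can instead be read from the Erd\H os--Stone and Bondy--Simonovits estimates recalled above; the substance of the theorem lies in the cycles of length proportional to $n$, where one cannot dispense with the Hamilton cycle.)
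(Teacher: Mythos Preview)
The paper does not prove Theorem~\ref{thm_Bondy}; it is quoted in the introduction as Bondy's classical result, cited to~\cite{Bondy2}, and serves only as motivation for the main theorem (the random-graph analogue). There is therefore no proof in the paper to compare your proposal against.

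On the proposal itself: the overall strategy---exploit a fixed Hamilton cycle and count forbidden chord configurations---is the right one, and you are honest that the passage from the local exclusions to the sharp bound $e(G)\le\lfloor n^2/4\rfloor$ is the unresolved crux. Your organisation is more elaborate than necessary, though: picking the \emph{largest} missing length $\ell$, bringing in an auxiliary $(\ell+1)$-cycle $D$, and tracking several forbidden patterns at once leaves you, as you say, an additive $O(n)$ short. Bondy's actual argument is a clean induction on $n$ that only ever analyses the single length $n-1$. If $G$ has no $C_{n-1}$, then for consecutive Hamilton-cycle vertices $v_i,v_{i+1}$ the edges $v_iv_k$ and $v_{i+1}v_{k+2}$ cannot coexist (together they splice into a $C_{n-1}$ omitting $v_{k+1}$), so the neighbourhood of $v_i$ and the suitably shifted neighbourhood of $v_{i+1}$ are disjoint; this gives $\deg(v_i)+\deg(v_{i+1})\le n$ \emph{exactly}, and summing over $i$ yields $4e(G)\le n^2$ with no error term. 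If $G$ does contain a $C_{n-1}$, delete the leftover vertex and invoke the induction hypothesis on the resulting Hamiltonian $(n-1)$-vertex graph (with a short extra argument when that subgraph equals $K_{(n-1)/2,(n-1)/2}$). The induction absorbs the case analysis you anticipated, and the equality characterisation drops out of the same degree-sum identity.
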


For more results on cycles in graphs, see~\cite{Bondy1}.

The results mentioned above can be also stated in the framework of {\em resilience},
defined by Sudakov and Vu \cite{SuVu}.

\begin{DFN} Let $\mathcal{P}$ be a monotone increasing graph property and $G$ be a graph having $\mathcal{P}$
\begin{itemize}
  \setlength{\itemsep}{1pt}
  \setlength{\parskip}{0pt}
  \setlength{\parsep}{0pt}
\item[(i)] (Global resilience) The {\em global resilience} of $G$ with respect to $\mathcal{P}$ is the minimum number $r$ such that by deleting $r$ edges from $G$ one can obtain a graph not having $\mathcal{P}$.
\item[(ii)] (Local resilience) The {\em local resilience} of $G$ with respect to $\mathcal{P}$ is the minimum number $r$ such that by deleting at most $r$ edges at each vertex of $G$ one can obtain a graph not having $\mathcal{P}$.
\end{itemize}
\end{DFN}

In this framework, Dirac's theorem can be reformulated as:
``The complete graph on $n$ vertices $K_n$ has local resilience $\left\lfloor \frac{n}{2} \right\rfloor$ with respect to Hamiltonicity''.
Theorem~\ref{thm_Bondy} can also be formulated as a global resilience result with an additional constraint:
``If one deletes fewer than $\frac{1}{2}{n \choose 2}$ edges from $K_n$ while preserving Hamiltonicity, then the resulting graph is always pancyclic''.
Sudakov and Vu pointed out that many classical results in extremal graph theory can be viewed
in the context of resilience, and initiated a systematic
study of resilience of random graphs. For example, in~\cite{SuVu} they showed that if
$p > \frac{\log^4 n}{n}$, then a.a.s.~every subgraph of $G(n,p)$ with minimum
degree at least $(\frac{1}{2}+o(1))np$ is Hamiltonian. Note that this is an extension
of Dirac's theorem, since $K_n$ can be regarded as the random graph $G(n,1)$.
For more results of this flavor,
see \cite{BaCsSa,BeKrSu,BoKoTa,DeKoMaSt,FrKr,HaLeSu,KrLeSu,SuVu}.

In this paper, we study the global resilience of random graphs with respect
to pancyclicity by extending Theorem~\ref{thm_Bondy} to binomial random graphs.
The {\em binomial random graph} $G(n,p)$ denotes the probability distribution on the
set of all graphs with vertex set $\{1,\ldots,n\}$ such that each pair of
vertices forms an edge randomly and independently with probability
$p$. We say that $G(n,p)$ possesses a graph property $\mathcal{P}$
{\em asymptotically almost surely} (a.a.s.~for short) if the
probability that a graph drawn from $G(n,p)$ possesses $\mathcal{P}$ tends to $1$ as $n$
tends to infinity. The pancyclicity
of random graphs has been studied in several papers, including
 \cite{Cooper1, Cooper2, CoFr, KrLeSu, Luczak}.
Krivelevich, Lee, and Sudakov~\cite{KrLeSu} proved that if $p \gg n^{-1/2}$, then $G(n,p)$ a.a.s.~has local resilience
$(\frac{1}{2} + o(1))np$ with respect to being pancyclic.
Since pancyclicity is a global property, it is more meaningful
to measure the local resilience than the global resilience; in a `typical' graph from $G(n,p)$,
one can easily remove all
Hamilton cycles by deleting about $np$ edges incident to a single vertex and
isolating that vertex. However, similarly as in Theorem~\ref{thm_Bondy},
if one adds the additional constraint that the remaining
graph should be Hamiltonian, then removing all cycles of some length $t \in \{3, \ldots, n-1\}$
requires deleting many more edges. We establish the global resilience of random
graphs with respect to pancyclicity in this sense.
Our main result is the following extension of Theorem~\ref{thm_Bondy}.

\begin{THM} \label{thm_mainthm}
If $p \gg n^{-1/2}$, then $G(n,p)$ a.a.s.~satisfies the following. Every Hamiltonian subgraph
$G' \subset G(n,p)$ with more than $(\frac{1}{2}+o(1))\frac{n^2p}{2}$ edges is pancyclic.
\end{THM}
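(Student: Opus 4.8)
Fix $\epsilon > 0$; by a standard argument it suffices to prove that a.a.s.\ every Hamiltonian $G' \subseteq G(n,p)$ with $e(G') \ge \left(\tfrac{1}{2} + \epsilon\right)\tfrac{n^2 p}{2}$ contains a cycle of length $t$ for every $t \in \{3, \dots, n\}$. As in Theorem~\ref{thm_Bondy}, the obstruction lives in two different places: the \emph{short} cycles, of constant length, where the relevant extremal phenomenon is local (the extremal configuration being a balanced complete bipartite graph, which is triangle-free), and the \emph{long} cycles, of length tending to infinity with $n$, where we exploit the fact that a Hamilton cycle of $G'$ is explicitly available. I would split the range $\{3, \dots, n\}$ accordingly, into $\{3, \dots, t_0\}$ and $\{t_0 + 1, \dots, n\}$ for a suitable constant $t_0 = t_0(\epsilon)$.

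\emph{Short cycles.} Here I would apply the transference theorem of Schacht (independently Conlon--Gowers). For a fixed $t$, the cycle $C_t$ has $2$-density $m_2(C_t) \le m_2(C_3) = 2$ and Tur\'an density $\pi(C_t) \le \pi(C_3) = \tfrac{1}{2}$; the first inequality shows that the threshold $n^{-1/m_2(C_t)}$ is at most $n^{-1/2} \ll p$, so the theorem applies and yields that a.a.s.\ every subgraph of $G(n,p)$ with more than $\left(\pi(C_t) + \epsilon\right)\binom{n}{2}p$ edges contains $C_t$. Since $e(G') > \left(\tfrac{1}{2} + \epsilon\right)\tfrac{n^2p}{2} > \left(\pi(C_t) + \epsilon\right)\binom{n}{2}p$, the graph $G'$ contains $C_t$ for every $3 \le t \le t_0$. (For even $t$ the chromatic number is $2$, so $\pi(C_t) = 0$ and even a vanishing proportion of the edges would do; it is the odd cycles, through $C_3$, that force both $p \gg n^{-1/2}$ and the constant $\tfrac{1}{2}$ — which is precisely why these cannot be improved.) In particular $G'$ contains a triangle, a fact we shall reuse below.

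\emph{Long cycles.} Fix a Hamilton cycle $C = v_1 v_2 \cdots v_n v_1$ of $G'$; all but $n$ of the more than $\left(\tfrac{1}{2} + \epsilon\right)\tfrac{n^2p}{2}$ edges of $G'$ are chords of $C$. Given a target $t$ with $t_0 < t < n$, put $s = n - t$; the task is to exhibit a few chords whose use ``short-cuts away'' exactly $s$ vertices of $C$. A single chord $v_a v_b$ spanning an arc with $m$ interior vertices yields a cycle of length $n - m$; pairwise non-crossing chords yield a cycle whose deficiency from $n$ is the sum of the corresponding $m$'s; and a pair of crossing chords permits small corrections of mixed sign. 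Combining a bounded number of such chords — using the triangle from the short-cycle step when the parity of $s$ needs adjusting — one can, in principle, realise every value of $s$ in the required range. What must be made quantitative is the supply: $G(n,p)$ contains, for each admissible $t$, an abundance of such constant-size chord certificates (a first-moment count together with the usual concentration of $G(n,p)$), and since $G'$ misses fewer than half of the edges of $G(n,p)$, it cannot destroy all certificates for any single $t$.

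The main obstacle is precisely this last step. Because $G'$ is restricted only by its edge count and Hamiltonicity — not by any minimum-degree or pseudorandomness hypothesis — it is free to delete, say, \emph{all} chords $\{v_i, v_{i+d}\}$ of some fixed difference $d$, or all members of some other natural family, and thereby to kill the ``obvious'' cycle of a particular length. The core of the argument is thus to set up a sufficiently redundant family of alternative constant-size configurations for each target length and to prove, by a case analysis over which chord-differences are ``dead'' and an averaging over the many translates of each configuration around $C$, that at least one surviving certificate always remains; keeping track of parities throughout (again via the short odd cycle) is the delicate part of the bookkeeping. Together with the short-cycle step this shows that $G'$ has cycles of all lengths $3, \dots, n$, i.e.\ is pancyclic.
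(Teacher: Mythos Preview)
Your outline has the right spirit but contains a genuine gap at exactly the point you flag as ``the main obstacle.'' The split at a \emph{constant} $t_0$ leaves almost the entire range $\{t_0+1,\dots,n\}$ to the chord argument, and for the hardest lengths --- those within $O(1)$, or more generally within $\delta n$, of $n$ --- ``a first-moment count together with the usual concentration of $G(n,p)$'' does not suffice. For a fixed small $l$ and a fixed Hamilton cycle there are $\Theta(n^4)$ four-chord configurations (``$l$-shortcuts'') yielding a cycle of length $n-l$, hence $\Theta(n^4p^4)$ in $G(n,p)$; but each edge of $G(n,p)$ lies in $\Theta(n^2p^3)$ of them, so deleting $(\tfrac12-\varepsilon)\tfrac{n^2p}{2}$ edges can in principle destroy $\Theta(n^4p^4)$ shortcuts --- the same order of magnitude. (With two-chord certificates the situation is strictly worse; see the paper's Section~4.2.) Thus no averaging over translates, and no parity trick via an auxiliary triangle, rescues the count by itself.

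What the paper actually does is apply Schacht's transference theorem a \emph{second} time, not to cycles $C_t$ but to the $4$-uniform hypergraph $H^l_n$ whose vertices are edges of $K_n$ and whose hyperedges are the $l$-shortcuts. The substantive work (Lemma~3.2, Corollary~3.4, Lemma~3.5) is to prove a supersaturation statement for shortcuts --- every $G'\subset K_n$ with $(\tfrac12+\varepsilon')\binom{n}{2}$ edges contains $\Omega_\varepsilon(n^4)$ $l$-shortcuts for every labelling --- and to verify $(K,n^{-1/2})$-boundedness, both uniformly in $l$. This simultaneously handles cycles of length $8,\dots,\delta n$ and $(1-\delta)n,\dots,n$ (an $l$-shortcut yields cycles of lengths $l+8$ and $n-l$). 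The intermediate range $[\delta n,(1-\delta)n]$ is treated by a separate, genuinely elementary counting argument (Section~3.2) using two-chord ``close crossings'' and a partition of $E(K_n)$ into ``directions''; here the constants do work out, but only because $l$ is bounded away from $0$ and $n$. Your sketch conflates these two regimes and omits the second transference step, which is the heart of the proof.
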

\begin{REM}
Theorem \ref{thm_mainthm} is in fact stronger than the local resilience result
obtained by Krivelevich, Lee, and Sudakov due to Sudakov and Vu's result
on resilient Hamiltonicity of random graphs. Let $p \gg n^{-1/2}$ and $G'$ be
a spanning subgraph of $G(n,p)$ with minimum degree at least $(\frac{1}{2} + o(1))np$. Then
$e(G') \geq (\frac{1}{2}+o(1))\frac{n^2p}{2}$ and, by Sudakov and Vu's theorem, $G'$ is Hamiltonian.
Therefore, $G'$ must be pancyclic by Theorem \ref{thm_mainthm} and this establishes the local resilience.
\end{REM}

Theorem \ref{thm_mainthm} is best possible in two ways. First, the range of $p$ is asymptotically tight.
To see this, assume that $p \ll n^{-1/2}$ and fix a Hamilton cycle $H$ in $G(n,p)$.
Next, from every triangle in $G(n,p)$, remove one edge which does not belong to $H$
 (this can be done as long as $n > 3$). Since a.a.s.~there are at most
$n^3p^3=o(n^2p)$ triangles in the graph, only a small proportion of edges is
removed and the resulting graph is triangle-free, thus not pancyclic.
Second, Hamilton subgraphs with fewer than $(\frac{1}{2}+o(1))\frac{n^2p}{2}$ edges need not be pancyclic. Assume that
$p \gg n^{-1/2}$ and again fix a Hamilton cycle $H$ in $G(n,p)$. If $n$ is even, then
properly color the vertices of $H$ with two colors and let $V_1, V_2$ be the color classes.
Remove all the edges of $G(n,p)$ within $V_1$ and $V_2$. A.a.s.~we will remove at most
$(\frac{1}{2} + o(1))\frac{n^2p}{2}$ edges, and the resulting graph will be bipartite, hence
not pancyclic. If $n$ is odd, then color the vertices of $H$ with two colors so that there is only one
(monochromatic) edge $e$ in the color class $V_1$ and no edges in the other color class $V_2$.
Assume that $e = xy$ and let $e_x$ and $e_y$ be the edges of $H - e$ incident to $x$ and $y$, respectively. 
Remove all the edges from $G(n,p)$ within $V_1$ except $e$, all the edges within $V_2$, and all the edges incident
to $e$ except $e_x$ and $e_y$ to obtain a graph $G'$. Note that all the odd cycles in $G'$ contain the edge $e$;
and since we removed most of the edges incident to $e$, there will be no triangles in $G'$.
Moreover, $H \subset G'$ so $G'$ is Hamiltonian, and a.a.s.~the above process produces a graph $G'$ with
$e(G') \ge (\frac{1}{2} + o(1))\frac{n^2p}{2}$. This shows that
the proportion $\frac{1}{2}$ in Theorem \ref{thm_mainthm} is optimal.

To simplify the presentation, we often omit floor 
and ceiling signs whenever these are not crucial
and make no attempts to optimize absolute constants 
involved. We also assume that the order $n$ of all
graphs tends to infinity and therefore is sufficiently 
large whenever necessary. \\

%% arXiv Version
\noindent\textbf{Notation.} Let $G$ be a graph with vertex
set $V$ and edge set $E$. For a vertex $v$, its degree will be denoted by $\deg(v)$,
and for a set $X$, its total degree $\deg(X)$ is defined as
$\deg(X) = \sum_{v \in X} \deg(v)$. The maximum degree and the minimum degree of $G$ are denoted
by $\Delta(G)$ and $\delta(G)$, respectively. For a set $X \subset V$,
let $N(X)$ be the collection of all vertices $v$ which are adjacent
to at least one vertex in $X$. If $X=\{v\}$, we
denote its neighborhood by $N(v)$. Let $N^{(0)}(v) = \{ v \}$ and
$N^{(k)}(v)$ be the set of vertices at distance exactly $k$ from $v$. This
can also be recursively defined as $N^{(k)}(v) = N(N^{(k-1)}(v))
\backslash (N^{(k-1)}(v) \cup N^{(k-2)}(v))$. Note that $N^{(1)}(v)
= N(v)$. For a set $X$, we denote by $E(X)$ the set of edges in the
induced subgraph $G[X]$, and let $e(X)= |E(X)|$. Similarly,
for two sets $X$ and $Y$, we denote by $E(X,Y)$ the set of ordered
pairs $(x,y) \in E$ such that $x \in X$ and $y \in Y$, and let $e(X,Y)
= |E(X,Y)|$. Note that $e(X,X) = 2e(X)$. Sometimes we will use
subscripts to prevent ambiguity. For example, $N^{(k)}_G(v)$ is
the $k$-th neighborhood of $v$ in the graph $G$. 
The set $\modulon$ is
the set of remainders modulo $n$, and we use an integer $i$ to represent
the modulo class $i \mod n$ (one modulo class can have several different representations).

%%
%%
%%
%%  PRELIMINARIES
%%
%%
%%
%%
\section{Preliminaries}
\label{section_preliminaries}

In this section, we collect several known results which are later used in the proof of the main theorem.
First theorem, proved by Haxell, Kohayakawa, and {\L}uczak~\cite{HaKoLu}, establishes
the global resilience of $G(n,p)$ with respect to the property of
containing a cycle of fixed odd length.

%The following theorem, which we restate here in the language of
%resilience, was proved by Haxell, Kohayakawa, and {\L}uczak \cite{HaKoLu}.
%%
%% random graphs small cycle
%%
\begin{THM} \label{thm_resilientsmallcycle}
For any fixed odd integer $l$ with $l \geq 3$, and positive $\varepsilon$, there exists a
constant $C=C(l, \varepsilon)$ such that if $p \geq C n^{-1+1/(l-1)}$, then $G(n,p)$ a.a.s.~satisfies
the following property: Every subgraph $G' \subset G(n,p)$ with at least $(\frac{1}{2} + \varepsilon)\frac{n^2p}{2}$
edges contains a cycle of length $l$.
\end{THM}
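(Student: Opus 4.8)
The plan is to deduce this from the transference principle for extremal results in sparse random graphs --- the recent result of Schacht mentioned in the abstract, also obtained independently by Conlon and Gowers --- using the classical Erd\H{o}s--Stone theorem for odd cycles as the ``dense'' input. Recall that this principle says the following. Let $F$ be a graph with $m_2(F) > 1$, where $m_2(F) = \max\{(e_H - 1)/(v_H - 2) : H \subseteq F,\ v_H \geq 3\}$ is the maximum $2$-density, and let $\pi(F)$ denote the Tur\'an density of $F$. If for every $\gamma > 0$ every sufficiently large $n$-vertex graph with at least $(\pi(F) + \gamma)\binom{n}{2}$ edges contains a copy of $F$, then for every $\delta > 0$ there is a constant $C > 0$ such that, whenever $p \geq C n^{-1/m_2(F)}$, a.a.s.\ every subgraph of $G(n,p)$ with at least $(\pi(F) + \delta) e(G(n,p))$ edges contains a copy of $F$.

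First I would compute the relevant parameters for $F = C_l$ with $l \geq 3$ odd. Since every subgraph of $C_l$ other than $C_l$ itself is a forest, and a forest on $v_H \geq 3$ vertices has at most $v_H - 1$ edges and hence $2$-density at most $1$, the maximum defining $m_2(C_l)$ is attained by $C_l$ itself. Thus $m_2(C_l) = (l-1)/(l-2) > 1$ and $n^{-1/m_2(C_l)} = n^{-(l-2)/(l-1)} = n^{-1+1/(l-1)}$, which is precisely the range of $p$ in the statement. Since $C_l$ is $3$-chromatic, the Erd\H{o}s--Stone theorem quoted in the introduction both gives $\pi(C_l) = \frac12$ and verifies the dense hypothesis above for $F = C_l$: every $n$-vertex graph with more than $(\frac12 + \gamma)\binom{n}{2}$ edges contains $C_l$.

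With these facts established, I would apply the transference principle to $F = C_l$ with $\delta = \varepsilon/2$, obtaining a constant $C = C(l, \varepsilon)$. The only remaining point is to pass between the two normalizations of the edge count: a.a.s.\ $e(G(n,p)) = (1+o(1))\binom{n}{2}p = (1+o(1))\frac{n^2 p}{2}$, so for large $n$ any subgraph $G' \subseteq G(n,p)$ with $e(G') \geq (\frac12 + \varepsilon)\frac{n^2 p}{2}$ automatically satisfies $e(G') \geq (\frac12 + \varepsilon/2) e(G(n,p))$ and therefore contains $C_l$, as required.

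I expect that the only genuine difficulty is the one packaged inside the transference principle itself. A self-contained proof would instead proceed via the sparse regularity lemma together with a counting (embedding) lemma guaranteeing many copies of $C_l$ across sparse regular pairs --- the route of Haxell, Kohayakawa, and {\L}uczak --- and the hard part there is establishing such a counting lemma at the critical density $p \asymp n^{-1+1/(l-1)}$, which is exactly the K{\L}R-type statement for odd cycles. Quoting the Schacht (or Conlon--Gowers) theorem circumvents this entirely, reducing the argument to the bookkeeping about $m_2(C_l)$ and $\pi(C_l)$ outlined above.
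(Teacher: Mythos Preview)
Your derivation is correct. Note, however, that the paper does not supply its own proof of this statement: it is quoted as a known theorem of Haxell, Kohayakawa, and \L uczak. Their original argument predates the transference principle and proceeds via the sparse regularity lemma together with an embedding lemma for odd cycles in sparse regular pairs --- exactly the route you sketch in your final paragraph. Your approach via the Schacht/Conlon--Gowers theorem is the now-standard shortcut: once one checks $m_2(C_l)=(l-1)/(l-2)$ and $\pi(C_l)=\tfrac12$, the result is an immediate corollary, with only the passage between the normalizations $e(G(n,p))$ and $\tfrac{n^2p}{2}$ left to note. Since the paper already imports Schacht's framework (Theorem~\ref{thm_transference}) for the $l$-shortcut argument, your route is arguably the more economical one in this context; the trade-off is that the Haxell--Kohayakawa--\L uczak proof is self-contained for odd cycles and does not require the full transference machinery.
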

%For any fixed odd integer $l$ with $l \geq 3$, and positive $\varepsilon$, there exists a
%constant $C=C(l, \varepsilon)$ such that if $p \geq C n^{-1+1/(l-1)}$, then $G(n,p)$ a.a.s.~has global resilience
%at least $(\frac{1}{2} - \varepsilon)\frac{n^2p}{2}$ with respect to the property of containing $C_l$.

%%
%% Bondy & Simonovits
%%
The following extremal result was proved by Bondy and Simonovits \cite{BoSi}.
\begin{THM} \label{thm_bondysimonovits}
Let $k$ be a positive integer and $G$ be a graph on $n$ vertices with more than $90kn^{1 + 1/k}$ edges. Then $G$ contains a cycle of length $2k$.
\end{THM}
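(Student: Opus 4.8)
\emph{Proof proposal.} The statement is the classical bound $\mathrm{ex}(n,C_{2k})=O(kn^{1+1/k})$ of Bondy and Simonovits, and I would reproduce their breadth-first-search argument, not worrying about the constant (hence $90k$ rather than the optimal value). The first move is to reduce to a graph of large minimum degree: since $e(G)>90kn^{1+1/k}$, the standard iterated-deletion argument produces a subgraph $H\subseteq G$ with $\delta(H)\ge e(G)/v(G)>90k\,n^{1/k}\ge 90k\,v(H)^{1/k}$; as a copy of $C_{2k}$ in $H$ is a copy in $G$, it suffices to show that any $H$ on $N$ vertices with $\delta(H)\ge d:=90k\,N^{1/k}$ contains a cycle of length $2k$. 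It is convenient to aim instead for the equivalent configuration of two internally vertex-disjoint paths of length $k$ sharing both endpoints, whose union is a $C_{2k}$.

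Fix a vertex $v$ and run a BFS from it, with layers $L_0=\{v\},L_1,L_2,\dots$, BFS tree $T$, and balls $B_i=L_0\cup\dots\cup L_i$. The basic observation is a lack of room: the minimum degree pushes the layers to grow by a factor about $d$, whereas $|B_k|\le N$ and $d/(90k)=N^{1/k}$, so the layers cannot even sustain growth by the factor $N^{1/k}$ for $k$ consecutive steps. Picking the first step $i\le k$ at which $|L_i|\le N^{1/k}|L_{i-1}|$ and comparing the at least $d\,|L_{i-1}|$ edge-endpoints at $L_{i-1}$ with the $|L_{i-1}|+|L_i|$ tree edges there, one finds $\Omega(d\,|L_{i-1}|)$ non-tree edges confined to $B_i$; since the earlier layers did expand geometrically, $B_i$ is small, so it is a locally very dense piece of $H$ equipped with the BFS tree $T$. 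From such a dense piece -- an edge-dense graph on few vertices together with a spanning tree of small depth -- one extracts, by a careful combinatorial analysis, two internally disjoint paths from some $u$ to some $w$ that can be trimmed and padded (rerouting short stretches through sibling branches or through extra edges changes lengths by small controlled amounts) until each has length exactly $k$; this is precisely the step carried out by Bondy and Simonovits, and, depending on whether the expansion failure occurs near the last layer or much earlier, it is organised either directly or by a variant of the same scheme inside $B_i$.

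I expect the genuine obstacle to be this last extraction: breadth-first search hands over short cycles and short paths in profusion, but forcing the length to be \emph{exactly} $2k$ -- both the value and the parity -- is delicate, since the cycles one reads off most directly (a non-tree edge together with the two tree-paths to its endpoints' common ancestor) tend to be shorter than $2k$ unless the expansion happens to break down only at the final layer. Making the padding and trimming argument go through is what fixes the precise expansion threshold demanded in the dichotomy and the size of the constant one carries ($90k$ in place of something like $2k$), and it is also the reason for working inside the high-minimum-degree subgraph $H$, where both the unavoidable collisions and the local rerouting that adjusts path lengths are guaranteed, rather than in $G$, which can be locally sparse while still containing cycles of many irrelevant lengths.
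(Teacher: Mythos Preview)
The paper does not supply a proof of this statement: Theorem~\ref{thm_bondysimonovits} is quoted as a known result of Bondy and Simonovits~\cite{BoSi} and used only as a black box (to guarantee the short even cycles $C_4$ and $C_6$ in Section~\ref{section_mainthm}). There is therefore no proof in the paper to compare your attempt against.

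As for your sketch itself: the overall shape --- pass to a subgraph of minimum degree $\Omega(kn^{1/k})$, run BFS from a vertex, and observe that the layers cannot all expand by a factor $n^{1/k}$ for $k$ consecutive steps --- does match the Bondy--Simonovits strategy. But you correctly identify, and then do not resolve, the actual difficulty: converting ``many non-tree edges confined to a small ball $B_i$'' into a cycle of length \emph{exactly} $2k$ is essentially the whole content of the theorem, and ``trimmed and padded \ldots by a careful combinatorial analysis'' is not a proof. In the original paper this step is handled by a separate structural lemma (on two-colourings of the vertices of a graph with sufficiently many edges, guaranteeing paths of every length up to a prescribed bound between the colour classes), and it is that lemma, not the BFS growth count, that does the real work and fixes the constant. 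Without it, the BFS part of your argument only yields \emph{some} short cycle through the dense ball, not $C_{2k}$ specifically; so as written the proposal is an outline with the key lemma missing rather than a proof.
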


%%
%%  Transference principle
%%

One of the key ingredients in the proof of Theorem~\ref{thm_mainthm} is a recent very general result of Schacht~\cite{Schacht} (independently obtained by Conlon and Gowers~\cite{CoGo}) that allows one to transfer various extremal results about discrete structures (such as graphs) to the setting of random binomial structures (such as binomial random graphs). Before we state this theorem, we need a few definitions on hypergraphs.

\begin{DFN}
\label{dfn_alphadense}
  Let $H$ be a $k$-uniform hypergraph, let $\alpha$ and $\varepsilon_0$ be reals, and let $f \colon (0,1) \rightarrow (0,1)$ be a non-decreasing function. We say that $H$ is {\em $(\alpha, f, \varepsilon_0)$-dense} if for every $\varepsilon \ge \varepsilon_0$ and $U \subset V(H)$ with $|U| \geq (\alpha + \varepsilon)|V(H)|$, we have
\[
|E(H[U])| \geq f(\varepsilon) |E(H)|.
\]
\end{DFN}

For a $k$-uniform hypergraph $H$, $i \in \{1, \ldots, k-1\}$, $v \in V(H)$, and $U \subset V(H)$, we denote by $\deg_i(v,U)$ the number of edges of $H$ containing $v$ and at least $i$ vertices in $U \setminus \{v\}$. More precisely,
\[
\deg_i(v,U) = |\{ A \in E(H) \colon |A \cap (U \setminus \{v\})|  \geq i \text{ and } v \in A \}|.
\]
Given an arbitrary set $A$ and a real number $q \in [0,1]$, we denote by $A_q$ the random subset of $A$, where each element is included with probability $q$ independently of all other elements.

\begin{DFN}
\label{dfn_Kpbounded} 
 Let $H$ be a $k$-uniform hypergraph, let $p \in (0,1)$, and let $K$ be a positive real. We say that $H$ is $(K,p)$-bounded if for every $i \in \{1, \ldots, k-1\}$ and $q \in [p,1]$, $H$ satisfies
\[
\Ex\left[ \sum_{v \in V(H)} \deg_i(v,V(H)_q)^2 \right] \leq Kq^{2i} \frac{|E(H)|^2}{|V(H)|}.
\]
\end{DFN}

The following statement can be easily read out from the proof of~\cite[Lemma 3.4]{Schacht}.

\begin{THM}
 \label{thm_transference} 
Let $(p_n)_{n \in \mathbb{N}}$ be a sequence of probabilities and let $(v_n)_{n \in \mathbb{N}}$ and $(e_n)_{n \in \mathbb{N}}$ be sequences of integers such that $p_n v_n \rightarrow \infty$ and $p_n^k e_n \rightarrow \infty$ as $n \rightarrow \infty$. Fix $\alpha$ and $K$ with $\alpha \geq 0$ and $K \ge 1$, and let $f \colon (0,1) \rightarrow (0,1)$ be a fixed non-decreasing function. For every positive $\varepsilon$, there exist $b, \varepsilon_0, C$, and $n_0$ such that for every $n$ and $q$ with $n \ge n_0$ and $n^{-1/3} \ge q \ge Cp_n$, the following holds. 

If $H$ is an $(\alpha, f, \varepsilon_0)$-dense and $(K,p_n)$-bounded hypergraph with $|V(H)| \geq v_n$ and $|E(H)| \ge e_n$, then with probability at least $1 - e^{-bqv_n}$, every subset $W$ of $V(H)_{q}$ with $|W| \ge (\alpha + \varepsilon)|V(H)_{q}|$ contains an edge of $H$.
\end{THM}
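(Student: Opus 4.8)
The plan is not to reprove this from scratch but to extract it, with explicit constants, from the proof of~\cite[Lemma~3.4]{Schacht} (equivalently, from the work of Conlon and Gowers~\cite{CoGo}). In that argument one fixes a sequence $(H_n)$ of $k$-uniform hypergraphs whose density and boundedness profiles are governed by Definitions~\ref{dfn_alphadense} and~\ref{dfn_Kpbounded}, and shows that, for $q$ in the stated range, the random induced subhypergraph $H_n[V(H_n)_q]$ a.a.s.\ has independence number smaller than $(\alpha+\varepsilon)|V(H_n)_q|$ --- which is exactly the statement that every $W\subseteq V(H_n)_q$ with $|W|\ge(\alpha+\varepsilon)|V(H_n)_q|$ spans an edge of $H_n$. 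To turn this into the formulation above, three things must be checked: (i) our hypotheses imply the ones in~\cite{Schacht}; (ii) the conclusion there already comes with an exponentially small failure probability of the form $e^{-bqv_n}$; and (iii) the constants $b,\varepsilon_0,C,n_0$ can be taken to depend only on $\alpha$, $K$, $f$, $\varepsilon$ and, through $v_n$ and $e_n$, on the divergence rates $p_nv_n\to\infty$ and $p_n^ke_n\to\infty$, so that the conclusion holds uniformly over all hypergraphs $H$ with $|V(H)|\ge v_n$ and $|E(H)|\ge e_n$, rather than only along a prescribed sequence.

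Point (i) should be essentially automatic: Definition~\ref{dfn_alphadense} is Schacht's density notion, and the weighted second-moment bound in Definition~\ref{dfn_Kpbounded}, namely $\Ex\big[\sum_v\deg_i(v,V(H)_q)^2\big]\le Kq^{2i}|E(H)|^2/|V(H)|$, is --- up to a cosmetic reformulation and a constant factor that can be absorbed into $K$ --- the codegree-boundedness hypothesis used in~\cite{Schacht}. For point (ii), the exponential bound arises from the following mechanism, which I would re-read while tracking constants: a Chernoff estimate pins $|V(H)_q|$ to $(1\pm o(1))q|V(H)|$; the $\alpha$-density guarantees that every subset of $V(H)$ of size $(\alpha+\varepsilon')|V(H)|$ spans at least $f(\varepsilon')|E(H)|$ edges, for a suitable $\varepsilon'=\varepsilon'(\varepsilon)>0$; and the $(K,p_n)$-boundedness feeds a Janson-type / second-moment deletion estimate in which the clustering term is controlled exactly by the $\deg_i$-sums appearing in Definition~\ref{dfn_Kpbounded}. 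The hypothesis $p_n^ke_n\to\infty$ makes the expected number of surviving edges diverge, while $p_nv_n\to\infty$ together with $q\ge Cp_n$ makes the Chernoff slack and the relevant exponents scale like $qv_n$; this is the source of the $e^{-bqv_n}$ bound, and one reads off $b$.

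The delicate ingredient --- and the step I would expect to be the real obstacle when carrying this out --- is the treatment of the exponentially many candidate sets $W$: a crude union bound over the $\binom{|V(H)_q|}{(\alpha+\varepsilon)|V(H)_q|}=e^{\Theta(qv_n\log(1/q))}$ choices loses a factor $\log(1/q)$ in the exponent and does not defeat $e^{-bqv_n}$. Schacht handles this with a staged-exposure argument --- revealing $V(H)_q$ in several rounds and, whenever a would-be large independent set appears, propagating the resulting edge deficiency through the rounds via the density hypothesis --- and the quantitative output of that argument is precisely the statement above. Concretely, I would rerun that staged-exposure estimate with $q$, $v_n$, $e_n$, $K$, $f$, $\varepsilon$ kept as free parameters throughout, check line by line that nothing about $H$ beyond $|V(H)|\ge v_n$ and $|E(H)|\ge e_n$ enters the bounds, and record the resulting $b,\varepsilon_0,C,n_0$; the upper restriction $q\le n^{-1/3}$ is merely the range in which the boundedness estimates in~\cite{Schacht} are formulated and applied. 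I expect this to be routine but tedious bookkeeping, with all the genuine mathematical content already contained in~\cite{Schacht}.
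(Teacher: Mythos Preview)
Your proposal is correct and matches the paper's own treatment: the paper does not prove Theorem~\ref{thm_transference} at all but simply asserts that it ``can be easily read out from the proof of~\cite[Lemma~3.4]{Schacht},'' adding only the remark that the upper bound $n^{-1/3}$ on $q$ is a convenience choice. Your outline of how the extraction goes---verifying that Definitions~\ref{dfn_alphadense} and~\ref{dfn_Kpbounded} are Schacht's hypotheses, that the staged-exposure argument yields the $e^{-bqv_n}$ failure probability, and that the constants depend only on the stated data---is more explicit than what the paper itself provides, but is exactly the intended route.
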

\begin{REM}
The upper bound $n^{-1/3}$ on $q$ is a function that we chose for convenience.
In fact, it can be replaced by $1/\omega(n)$ for an arbitrary function $\omega(n)$ which tends to 
infinity as $n$ goes to infinity. Note that $n_0$ will depend on this choice of $\omega(n)$.
\end{REM}

%%
%% Chernoff's inequality
%%
Next theorem is a well-known concentration result (see, e.g.,
\cite[Theorem 2.3]{JaLuRu}) that will be used several times
in the proof. We denote by $\Bi(n,p)$ the binomial random variable with parameters
$n$ and $p$.
\begin{THM}[Chernoff's inequality]
  \label{thm_Chernoff}
  If $X \sim \Bi(n,p)$ and $\varepsilon>0$, then
  \[ P\big(|X - \mathbb{E}[X]| \geq \varepsilon \mathbb{E}[X]\big) \leq e^{-\Omega_\varepsilon(\mathbb{E}[X])}. \]
\end{THM}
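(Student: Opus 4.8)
The plan is to use the standard exponential-moment (Bernstein--Chernoff) method. Write $\mu = \mathbb{E}[X] = np$ and decompose $X = \sum_{i=1}^n X_i$ as a sum of independent indicator variables, each equal to $1$ with probability $p$. For any real $t$ and any threshold $a$, Markov's inequality applied to $e^{tX}$ gives $P(X \ge a) \le e^{-ta}\,\mathbb{E}[e^{tX}]$ when $t>0$, and $P(X \le a) \le e^{-ta}\,\mathbb{E}[e^{tX}]$ when $t<0$. By independence, $\mathbb{E}[e^{tX}] = \prod_{i=1}^n \mathbb{E}[e^{tX_i}] = (1-p+pe^t)^n = \bigl(1+p(e^t-1)\bigr)^n \le \exp\!\bigl(np(e^t-1)\bigr) = \exp\!\bigl(\mu(e^t-1)\bigr)$, using $1+x \le e^x$. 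This reduces everything to choosing $t$ well.

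First I would handle the upper tail. Taking $a = (1+\varepsilon)\mu$ and $t = \ln(1+\varepsilon) > 0$ yields
\[
P\bigl(X \ge (1+\varepsilon)\mu\bigr) \le \exp\!\Bigl(\mu\bigl(\varepsilon - (1+\varepsilon)\ln(1+\varepsilon)\bigr)\Bigr).
\]
It remains to note that the elementary function $g(\varepsilon) := (1+\varepsilon)\ln(1+\varepsilon) - \varepsilon$ is strictly positive for every $\varepsilon > 0$: it vanishes at $\varepsilon = 0$ and has derivative $g'(\varepsilon) = \ln(1+\varepsilon) > 0$. Hence, with $c_1 := g(\varepsilon) > 0$ depending only on $\varepsilon$, we get $P\bigl(X \ge (1+\varepsilon)\mu\bigr) \le e^{-c_1 \mu}$.

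Next the lower tail. If $\varepsilon \ge 1$, then $(1-\varepsilon)\mu \le 0$ and, since $X \ge 0$, the event $\{X \le (1-\varepsilon)\mu\}$ is contained in $\{X = 0\}$, which has probability $(1-p)^n \le e^{-\mu}$; so assume $0 < \varepsilon < 1$. Taking $a = (1-\varepsilon)\mu$ and $t = \ln(1-\varepsilon) < 0$ gives, by the same computation,
\[
P\bigl(X \le (1-\varepsilon)\mu\bigr) \le \exp\!\Bigl(\mu\bigl(-\varepsilon - (1-\varepsilon)\ln(1-\varepsilon)\bigr)\Bigr),
\]
and again $h(\varepsilon) := -\varepsilon - (1-\varepsilon)\ln(1-\varepsilon)$ satisfies $h(0) = 0$ and $h'(\varepsilon) = \ln(1-\varepsilon) < 0$, so $h(\varepsilon) < 0$ and $c_2 := -h(\varepsilon) > 0$ depends only on $\varepsilon$. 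Combining the two tails by a union bound,
\[
P\bigl(|X - \mu| \ge \varepsilon\mu\bigr) \le e^{-c_1\mu} + e^{-c_2\mu} \le 2e^{-\min(c_1,c_2)\mu} = e^{-\Omega_\varepsilon(\mu)},
\]
which is the claimed bound. I do not anticipate a genuine obstacle here: the only points requiring a little care are choosing the optimal value of $t$ (so that the exponent becomes the Legendre transform of the log-moment-generating function of $X$) and verifying the two one-line elementary inequalities $g(\varepsilon) > 0$ and $h(\varepsilon) < 0$ that guarantee a positive rate constant, together with the trivial edge case $\varepsilon \ge 1$ in the lower tail.
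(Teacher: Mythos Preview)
Your argument is correct and is the standard exponential-moment (Cram\'er--Chernoff) proof. The paper does not actually prove this statement; it simply quotes it as a well-known concentration result with a reference to \cite[Theorem~2.3]{JaLuRu}, whose proof is essentially the one you wrote out.
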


The argument of our proof becomes simpler if we assume that the edge probability
$p$ is exactly $C n^{-1/2}$ for some positive constant $C$.
However, if $p' \leq p$, then the fact that $G(n,p')$ has
global resilience $(\frac{1}{2} - \varepsilon)\frac{n^2p'}{2}$ with respect to some property
does not necessarily imply that $G(n,p)$ has global resilience $(\frac{1}{2} - \varepsilon)\frac{n^2p}{2}$
with respect to the same property. The following proposition, which is a global resilience
form of a similar statement, \cite[Proposition 3.1]{KrLeSu}, will allow us
to overcome this difficulty by establishing the monotonicity of (relative) global
resilience. The proof is essentially the same as the proof of~\cite[Proposition 3.1]{KrLeSu};
we repeat it here for the sake of completeness.
%We omit the proof of this proposition which is essentially the same as
%the proof of \cite[Proposition 3.1]{KrLeSu}.

%%
%% Probability restriction proposition
%%
\begin{PROP} \label{prop_probrestriction}
Assume that $0 < p' \leq p \leq 1$ and $n^2p' \gg 1$. If $G(n, p')$ a.a.s.~has global resilience at
least $(\frac{1}{2} - \frac{\varepsilon}{2})\frac{n^2p'}{2}$ with respect to a monotone increasing
graph property $\mathcal{P}$, then $G(n,p)$ a.a.s.~has global
resilience at least $(\frac{1}{2} - \varepsilon)\frac{n^2p}{2}$ with respect to the same
property.
\end{PROP}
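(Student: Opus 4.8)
The plan is to use a standard coupling between $G(n,p)$ and $G(n,p')$: since $p' \le p$, a graph drawn from $G(n,p)$ contains a copy of $G(n,p')$ obtained by independently retaining each edge with probability $p'/p$. So, writing $G \sim G(n,p)$, I would let $G''$ be the random subgraph of $G$ in which each edge of $G$ survives independently with probability $p'/p$, so that $G'' \sim G(n,p')$.

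First I would use the hypothesis: a.a.s.\ every subgraph of $G''$ with at least $(\frac12 - \frac{\varepsilon}{2})\frac{n^2 p'}{2}$ edges has $\mathcal P$. Now suppose, for contradiction, that $G$ does \emph{not} have global resilience at least $(\frac12 - \varepsilon)\frac{n^2 p}{2}$ with respect to $\mathcal P$; this means there is a subgraph $G' \subseteq G$ with $e(G') \ge e(G) - (\frac12 - \varepsilon)\frac{n^2 p}{2}$ (equivalently, at least $(\frac12 - \varepsilon)\frac{n^2 p}{2}$ edges can be removed from $G'$ while keeping $\mathcal P$ destroyed — I'd phrase it via the contrapositive) that fails $\mathcal P$. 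Since $\mathcal P$ is monotone increasing, every subgraph of $G'$ also fails $\mathcal P$. Then I would look at $G' \cap G''$, the edges of $G'$ that survive the second sampling. This is a subgraph of $G''$ that fails $\mathcal P$, so by the a.a.s.\ property of $G''$ we must have $e(G' \cap G'') < (\frac12 - \frac{\varepsilon}{2})\frac{n^2 p'}{2}$.

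The heart of the argument is then a concentration estimate showing that this last inequality is very unlikely if $e(G')$ is large. Conditioned on $G$ (and on the choice of $G'$, which depends only on $G$), the quantity $e(G' \cap G'')$ is a sum of $e(G')$ independent indicator variables each with mean $p'/p$, so $e(G' \cap G'') \sim \mathrm{Bi}(e(G'), p'/p)$ with expectation $e(G') \cdot p'/p$. I need a lower bound on $e(G')$: using standard Chernoff bounds, a.a.s.\ $e(G) \ge (1 - o(1))\binom n2 p \ge (1-o(1))\frac{n^2 p}{2}$, hence for the extremal $G'$ we get $e(G') \ge (\frac12 + \varepsilon - o(1))\frac{n^2 p}{2}$, so $\Ex[e(G' \cap G'')] \ge (\frac12 + \varepsilon - o(1))\frac{n^2 p'}{2} \ge (\frac12 + \frac{\varepsilon}{2})\frac{n^2 p'}{2}$ for $n$ large. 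Since this expectation exceeds the threshold $(\frac12 - \frac{\varepsilon}{2})\frac{n^2 p'}{2}$ by a constant factor, Theorem \ref{thm_Chernoff} gives $P\big(e(G'\cap G'') < (\frac12 - \frac{\varepsilon}{2})\frac{n^2 p'}{2}\big) \le e^{-\Omega_\varepsilon(n^2 p')}$, and this probability is $o(1)$ because $n^2 p' \gg 1$.

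There is one subtlety that I expect to be the main technical point: $G'$ is chosen depending on $G$, and a priori there could be exponentially many candidate ``bad'' subgraphs $G'$, so a naive union bound over all of them could overwhelm the $e^{-\Omega_\varepsilon(n^2 p')}$ saving. I would handle this by not union-bounding over subgraphs at all. Instead, observe that if $G$ has a bad subgraph $G'$ with many edges, then in particular $G'$ can be taken to be \emph{edge-maximal} among bad subgraphs, but more cleanly: the event ``$G''$ has a bad subgraph with $< (\frac12-\frac{\varepsilon}{2})\frac{n^2 p'}{2}$ edges while $G$ has a bad subgraph with $\ge (\frac12 + \frac{\varepsilon}{2})\frac{n^2 p}{2}$ edges'' forces, for \emph{that particular} $G'$, the atypical event $e(G' \cap G'') < (\frac12-\frac{\varepsilon}{2})\frac{n^2p'}{2}$; so I should reveal $G$ first, fix the bad $G'$ as a deterministic function of $G$ (if one exists), and only then reveal the second sampling. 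With this ordering the conditional Chernoff bound above applies to a single fixed graph $G'$, no union bound is needed, and the whole argument goes through. (As noted in the paper, this mirrors the proof of \cite[Proposition 3.1]{KrLeSu}.) \qed
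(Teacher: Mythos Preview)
Your overall strategy --- couple $G'' \sim G(n,p')$ inside $G \sim G(n,p)$ by keeping each edge with probability $p'/p$, apply Chernoff to a single ``bad'' witness fixed as a function of $G$, and combine with the a.a.s.\ resilience of $G''$ --- is exactly the paper's. The conditioning trick you describe in your last paragraph is the contrapositive of the paper's argument: the paper defines $\mathcal{A} = \{G_1 : P(G_2 \text{ has resilience} \mid G_1) \ge \tfrac12\}$, shows $P(G_1 \in \mathcal{A}) = 1-o(1)$ by Markov, and then for each deleted set $H$ uses that with positive probability the sampling simultaneously makes $G_2$ resilient and $e(H')$ small, whence $G_2 - H' \in \mathcal{P}$ and so $G_1 - H \in \mathcal{P}$ deterministically.

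There is, however, a genuine slip in how you unpack the hypothesis. ``Global resilience at least $r$'' means every subgraph obtained by deleting \emph{fewer than} $r$ edges still has $\mathcal{P}$; equivalently, every subgraph with more than $e(G'')-r$ edges has $\mathcal{P}$. It does \emph{not} mean every subgraph with at least $r$ edges has $\mathcal{P}$. So the conclusion you need from the resilience of $G''$ is not $e(G'\cap G'') < (\tfrac12-\tfrac{\varepsilon}{2})\tfrac{n^2p'}{2}$ but rather $e(G'') - e(G'\cap G'') \ge (\tfrac12-\tfrac{\varepsilon}{2})\tfrac{n^2p'}{2}$. With this correct threshold your Chernoff bound on $e(G'\cap G'')$ alone no longer gives a constant-factor gap: the expectation you compute, $(\tfrac12+\tfrac{\varepsilon}{2})\tfrac{n^2p'}{2}$, matches the threshold $e(G'') - (\tfrac12-\tfrac{\varepsilon}{2})\tfrac{n^2p'}{2} \approx (\tfrac12+\tfrac{\varepsilon}{2})\tfrac{n^2p'}{2}$ up to $o(1)$. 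The clean fix --- and this is precisely what the paper does --- is to apply Chernoff to the \emph{deleted} set $H = G \setminus G'$ instead: since $e(H) \le (\tfrac12-\varepsilon)\tfrac{n^2p}{2}$, one has $\Ex[e(H\cap G'') \mid G] \le (\tfrac12-\varepsilon)\tfrac{n^2p'}{2}$, and Chernoff gives $e(H\cap G'') < (\tfrac12-\tfrac{\varepsilon}{2})\tfrac{n^2p'}{2}$ with the required $\Theta(\varepsilon)$ slack. This is exactly the inequality $e(G'') - e(G'\cap G'') < (\tfrac12-\tfrac{\varepsilon}{2})\tfrac{n^2p'}{2}$ you need.
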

\begin{proof}
Let $\mathcal{R(P)}$ be the property of having global resilience at
least $(\frac{1}{2} - \frac{\varepsilon}{2})\frac{n^2p'}{2}$ with respect to
property $\mathcal{P}$. Define $q = p' / p$
and consider the following two-round process of exposing the edges
of $G(n,p')$. In the first round, every edge appears with
probability $p$ (call this graph $G_1$). In the second round,
every edge that appeared in the first round will remain with
probability $q$ and will be deleted with probability $1-q$ (call
this graph $G_2$). Then, $G_1$ has the same distribution as $G(n,p)$
and $G_2$ has the same distribution as $G(n,p')$. By our assumption,
we know that $G_2$ a.a.s.~has property $\mathcal{R(P)}$. Now, let $X$
be the event that $G_1$ satisfies $P(G_2 \notin \mathcal{R(P)} |
G_1) \geq \frac{1}{2}$. Then, $\frac{1}{2}P(X) \leq P(G_2 \notin \mathcal{R(P)})=o(1)$
and therefore $P(X) = o(1)$. Thus, a.a.s.~in $G(n,p)$, $P(G_2 \notin
\mathcal{R(P)} | G_1) < \frac{1}{2}$ or in other words, $P(G_2 \in \mathcal{R(P)} |
G_1) \geq \frac{1}{2}$. Let $\mathcal{A}$ be the collection of graphs
$G_1$ having this property.

Since a.a.s.~$G(n,p) \in \mathcal{A}$, we can condition on the event that $G_1 = G(n,p) \in \mathcal{A}$.
Given a subgraph $H \subset G_1$ with at
most $(\frac{1}{2} - \varepsilon)\frac{n^2p}{2}$ edges, sample every edge of $G_1$ with probability $q$ to obtain
subgraphs $H' \subset G_2 \subset G_1$. Since $G_1 \in \mathcal{A}$, we know that
$P(G_2 \in \mathcal{R(P)} | G_1) \geq \frac{1}{2}$; also, by Theorem~\ref{thm_Chernoff},
$P\big( e(H') \leq (\frac{1}{2} - \frac{\varepsilon}{2})\frac{n^2p'}{2}\big) \geq 1-o(1)$.
Thus, these two events have a non-empty intersection and therefore it is possible to find
subgraphs $H' \subset G_2 \subset G_1$ such that $G_2 \in \mathcal{R(P)}$ and
$e(H') \leq (\frac{1}{2} - \frac{\varepsilon}{2})\frac{n^2p'}{2}$. Then $G_2 - H'$ must have
property $\mathcal{P}$, and hence $G_1 - H$ must also have it by monotonicity.
\end{proof}

%%
%%
%%
%%  MAIN THEOREM
%%
%%
%%
%%
\section{Main Theorem}
\label{section_mainthm}

In this section, we prove the following main theorem.
\begin{THM} \label{thm_mainthm2}
For all positive $\varepsilon $, there exists a constant $C$ such that if $p \geq Cn^{-1/2}$, then $G(n,p)$
a.a.s.~satisfies the following: Every Hamiltonian subgraph
$G'$ of $G(n,p)$ with more than $(\frac{1}{2}+\varepsilon)\frac{n^2p}{2}$ edges is pancyclic.
\end{THM}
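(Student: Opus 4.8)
The plan is to establish pancyclicity of a Hamiltonian subgraph $G'$ with at least $(\tfrac12+\varepsilon)\tfrac{n^2p}{2}$ edges by producing, for every target length $t$ with $3 \le t \le n-1$, a cycle of length exactly $t$ (length $n$ is free since $G'$ is Hamiltonian). It is natural to split into three regimes. First, by Proposition~\ref{prop_probrestriction}, it suffices to handle $p = Cn^{-1/2}$, so we may assume $p$ is essentially at the threshold. For the regime of \emph{short odd cycles} (fixed odd $t$), Theorem~\ref{thm_resilientsmallcycle} applies directly once $C$ is large enough: any subgraph with a $(\tfrac12+\varepsilon)$-proportion of the edges of $G(n,p)$ already contains a cycle of that odd length, with no need to use Hamiltonicity. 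For \emph{short even cycles} (fixed even $t$), Theorem~\ref{thm_resilientsmallcycle} is unavailable, and this is where Hamiltonicity enters: I would fix a Hamilton cycle $H$ in $G'$, and use a chord-plus-arc argument in the spirit of Bondy's proof of Theorem~\ref{thm_Bondy}, exploiting that $G'$ has density above $\tfrac12$ relative to $G(n,p)$ to guarantee many chords of $H$ whose endpoints are at the right cyclic distance.

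The heart of the matter, and the place where I expect the main obstacle, is the regime of \emph{long cycles}, say $\varepsilon' n \le t \le n-1$, and the transition range connecting the fixed-length analysis to the linear-length analysis. Here the strategy is a \emph{rotation-extension / absorption} type argument built on the fixed Hamilton cycle $H$ of $G'$: given $H = v_1 v_2 \cdots v_n v_1$, one wants to find a chord $v_i v_j$ of $H$ in $G'$ with $j - i \equiv t-1 \pmod n$ (or with $j-i$ and $n-(j-i)$ summing appropriately), which immediately yields a $t$-cycle by taking the arc of $H$ together with the chord; more generally, using two or more chords one can realize any length $t$ as a sum of arc-lengths. So the combinatorial task reduces to: \emph{the bipartite-like ``chord-distance'' structure of $G'$ hits every residue $1, \ldots, n-1$}, or at least every residue except those forced to be avoided by an extremal (bipartite) configuration. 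Because $G'$ has more than $\tfrac12$ the edges of $G(n,p)$, an extremal-type argument should show that the set of chord-distances attained is either everything, or (in the near-extremal case) contains a long interval plus enough structure to build short even cycles as well. The $o(1)$ slack and the best-possible bipartite example strongly suggest that the right statement is ``either $G'$ contains all cycle lengths, or $G'$ is essentially bipartite, in which case it already contradicts being above density $\tfrac12$''.

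To make the chord-distance argument quantitative and to cover the full range $3 \le t \le n-1$ uniformly, I would invoke Theorem~\ref{thm_transference} with an appropriately designed auxiliary hypergraph $H_t$ whose vertex set is (a copy of) the edge set of $K_n$ and whose hyperedges encode the minimal edge-sets of $G(n,p)$ that, together with a Hamilton cycle, force a $t$-cycle --- roughly, the ``templates'' consisting of a few chords at prescribed cyclic distances. One then needs to verify the two hypotheses of Theorem~\ref{thm_transference}: the $(\alpha, f, \varepsilon_0)$-density with $\alpha = \tfrac12$, which is exactly a deterministic extremal statement of Bondy--Simonovits / Erd\H{o}s--Stone flavor (Theorems~\ref{thm_Bondy},~\ref{thm_bondysimonovits}) saying a dense enough subgraph of $K_n$ with a Hamilton cycle contains these templates in positive proportion, and the $(K,p)$-boundedness, which is a second-moment computation on how many such templates pass through a fixed edge of $K_n$. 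The hardest and most delicate steps will be (a) choosing the template hypergraph so that a $(\tfrac12+\varepsilon)$-fraction of the edges always contains such a template for \emph{every} $t$ simultaneously --- this is the genuine extremal content and is where the near-bipartite extremal example must be handled --- and (b) controlling the edge probability: since the template for a $2k$-cycle has $\Theta(k)$ chords, the naive computation needs $p^{2k} e_n \to \infty$, which at $p \approx n^{-1/2}$ fails once $k$ grows; the resolution is to use Theorem~\ref{thm_transference} only for \emph{bounded} $t$ and the rotation/absorption argument directly (inside the random graph, not via transference) for $t$ linear in $n$, with the transition range $t = \omega(1) \cap o(n)$ bridged by a two-chord argument where only a constant number of additional edges is required. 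Assembling these three ranges, with $C$ chosen large enough to satisfy all the finitely-many invocations of Theorems~\ref{thm_resilientsmallcycle} and~\ref{thm_transference}, completes the proof of Theorem~\ref{thm_mainthm2}.
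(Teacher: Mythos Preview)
Your overall architecture --- split by cycle length, apply Theorem~\ref{thm_transference} in one regime and a direct chord-counting argument in another, and use Proposition~\ref{prop_probrestriction} to reduce to $p = Cn^{-1/2}$ --- matches the paper. But the assignment of methods to regimes is essentially inverted, and this causes a genuine gap.

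The hard regime is not the transition $t = \omega(1)\cap o(n)$; it is $t$ very close to $n$, say $t = n-1$. Your plan there is ``rotation-extension/absorption'', but rotation--extension produces long paths and Hamilton cycles, not cycles of a prescribed length like $n-1$. A single chord $\{i,j\}$ at cyclic distance $2$ would give a $(n-1)$-cycle, but there are only $n$ such potential chords and about $np$ in $G(n,p)$, all of which the adversary can delete. A two-chord (``2-crossing'') configuration does not help either: as noted in the paper's concluding remarks, there are only about $n^2$ crossing pairs in $K_n$ that yield a cycle of length $n-1$, hence about $n^2p^2 \ll n^2p$ in $G(n,p)$, and again the adversary removes them all. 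So neither your rotation idea nor your two-chord bridge covers lengths in $[(1-\delta)n, n-1]$.

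The paper's resolution is to use a \emph{four}-edge template (an ``$l$-shortcut'') whose union with the Hamilton cycle simultaneously contains cycles of lengths $l+8$ and $n-l$; thus the \emph{same} transference argument handles both the short range $[8,\delta n]$ and the very long range $[(1-\delta)n,n]$ at once, by running $l$ over $\{0,\ldots,\delta n\}$. Because the template has fixed size $4$ regardless of $l$, the $(K,p)$-boundedness check goes through uniformly at $p = n^{-1/2}$, sidestepping exactly the growing-template obstacle you flagged. The medium range $[\delta n,(1-\delta)n]$ is then the \emph{easy} regime: here there are $\Theta(n^3)$ crossing pairs in $K_n$ that produce a cycle of each such length, and a direct first-moment count in $G(n,p)$ (no transference) shows the adversary cannot destroy them all. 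So: swap your regimes --- transference (with a constant-size $4$-edge gadget) for short \emph{and} near-Hamiltonian lengths together, and a direct 2-crossing count for the middle.
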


The proof breaks down into two parts. Let $\delta$ be a positive constant depending on $\varepsilon$
which will be chosen in the first part of the proof (one can take $\delta = \varepsilon / 64$).
First, we show that $G'$ contains very short cycles (lengths from $3$ to $\delta n$) 
and very long cycles (lengths from $(1-\delta)n$ to $n$). 
Second, we show that $G'$ contains all cycles of 
intermediate lengths (from $\delta n$ to $(1-\delta)n$). 
Since the ideas and tools used in each part are quite different,
we split the proof into two subsections. 

%%
%%
%%  Short and Long Cycles
%%
%%

\subsection{Short and Long Cycles}

\begin{THM} \label{thm_mainthmpart3}
For all positive $\varepsilon$, there exist positive constants $\delta$ and $C$
such that if $p \geq Cn^{-1/2}$, then $G(n,p)$
a.a.s.~satisfies the following: Every Hamiltonian subgraph
$G'$ of $G(n,p)$ with more than $(\frac{1}{2}+\varepsilon)\frac{n^2p}{2}$ edges contains
a cycle of length $t$ for all $t \in [3, \delta n] \cup [(1-\delta)n, n]$.
\end{THM}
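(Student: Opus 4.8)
The plan is to reduce to the case $p = Cn^{-1/2}$ via Proposition~\ref{prop_probrestriction}, to fix once and for all a Hamilton cycle $H = v_1 v_2 \cdots v_n v_1$ of $G'$, and to choose $\delta = \delta(\varepsilon)$ small (say $\delta = \varepsilon/64$) while letting $C$ be as large as the transference step below requires. I would then split the target lengths into the \emph{bounded} range $t \in \{3, \dots, L\}$ for a suitable constant $L = L(\varepsilon)$ and the \emph{linear} ranges $t \in (L, \delta n]$ and $t \in [(1-\delta)n, n-1]$; the value $t = n$ is witnessed by $H$ itself.

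For the bounded range I would invoke Theorem~\ref{thm_transference}. Let $F_L$ be the \emph{fan}: a vertex $u$ joined to all vertices of a path $x_1 x_2 \cdots x_L$, so that a copy of $F_L$ contains a cycle $u x_i x_{i+1} \cdots x_{i+\ell-2} u$ of every length $\ell$ with $3 \le \ell \le L+1$. Apply Theorem~\ref{thm_transference} to the $(2L-1)$-uniform hypergraph $\mathcal{H}$ on vertex set $E(K_n)$ whose edges are the copies of $F_L$, with $p_n = n^{-1/2}$ and $\alpha = 1/2$. Since $\chi(F_L) = 3$, the Erd\H{o}s--Stone theorem gives that every $n$-vertex graph with $(\tfrac12 + o(1))\binom n2$ edges contains $F_L$, and the Erd\H{o}s--Simonovits supersaturation theorem upgrades this to: every graph with at least $(\tfrac12 + \varepsilon)\binom n2$ edges contains at least an $f(\varepsilon)$-fraction of all copies of $F_L$ in $K_n$, for some non-decreasing $f \colon (0,1) \to (0,1)$ depending only on $L$ --- so $\mathcal H$ is $(\tfrac12, f, \varepsilon_0)$-dense. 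A routine second-moment estimate, using that $F_L$ is balanced with the relevant $2$-density equal to $2$ (the same scale as a triangle, which is why $p_n = n^{-1/2}$ is the right threshold), shows $\mathcal H$ is $(K, p_n)$-bounded for a constant $K = K(L)$; moreover $p_n |V(\mathcal H)| \to \infty$ and $p_n^{2L-1}|E(\mathcal H)| = \Theta(n^{3/2}) \to \infty$. Taking $q = p$ (which forces $C$ to be at least the constant the theorem yields; since $p = Cn^{-1/2} \le n^{-1/3}$ this is in range) and running the theorem with $\varepsilon/2$ in place of $\varepsilon$ gives that a.a.s.\ every subgraph of $G(n,p)$ with more than $(\tfrac12 + \varepsilon)\binom n2 p$ edges contains $F_L$; in particular $G'$ does, and hence $G'$ has cycles of all lengths in $\{3, \dots, L+1\}$.

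For the linear ranges I would perform surgery on $H$ using the chords of $G'$, i.e.\ its $\ge (\tfrac14 + \tfrac\varepsilon2)n^2 p - n$ edges outside $H$. The elementary moves are: a chord $v_a v_b$ whose shorter $H$-arc has $s$ edges simultaneously produces a cycle of length $s+1$ and one of length $n - s + 1$; and a bounded number of chords can be spliced into $H$ so as to reroute around (and thereby skip) a prescribed number of vertices, changing the length of the resulting cycle by any prescribed bounded amount. The catch is that one may not demand a chord of any single span --- deleting $(\tfrac14 - \tfrac\varepsilon2)n^2 p$ edges is enough to destroy every chord of span at most $2\delta n$ --- so each target length must be realised through a whole family of admissible configurations, of which one must survive. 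Here I would use two robustness properties of the host graph: $G(n,p)$ a.a.s.\ has bounded co-degrees and has no dense spot (every set of $m \le \sqrt n$ vertices spans $O(m)$ edges), so the surplus of $\Theta(n^2 p)$ chords of $G'$ cannot be hoarded on a small vertex set, and a simple degree count then shows a positive fraction of the vertices are incident to $\Omega(\varepsilon np)$ chords. With this, for the long range one removes $s \le \delta n$ vertices and recloses $H$ through a short detour built from chords incident to the neighbourhoods of the removed vertices; for the short range one builds a short cycle from a long-span chord and a return path of two short $H$-subpaths whose lengths sum to the target minus two, joined by further chords near its ends. In each case only $O(1)$ chords are inserted, so the relevant events stay within the a.a.s.\ budget, and one finishes with a union bound over the $O(n)$ target lengths.

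The main obstacle is precisely this last surgery and its robustness against an adversary who may concentrate all deletions on span classes or short intervals. I expect the cleanest organisation is a short dichotomy on the minimum span $s_0$ of a chord of $G'$ with respect to $H$: if $s_0 \le \delta n$, a single chord already delivers short cycles up to length $s_0 + 1$ and long cycles down to length $n - s_0 + 1$, and the remaining lengths are obtained by combining this chord with others or with an $F_L$-gadget; if every chord has span exceeding $\delta n$, then all chords are long-range and the splicing moves above, fed by the edge-counting estimates for $G(n,p)$, produce every required length directly. Verifying that this counting genuinely survives all adversarial choices --- not merely a typical one --- is where I expect most of the work to lie.
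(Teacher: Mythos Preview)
Your treatment of the bounded range $\{3,\dots,L\}$ via the fan $F_L$ and Theorem~\ref{thm_transference} is sound and is genuinely different from the paper, which simply cites Theorem~\ref{thm_resilientsmallcycle} for odd lengths and Theorem~\ref{thm_bondysimonovits} for even lengths up to $7$. Your route is more uniform; theirs avoids verifying $(K,p_n)$-boundedness for an auxiliary $(2L-1)$-uniform hypergraph.

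The genuine gap is the linear range. What you have written there is not a proof but a plan whose central difficulty you yourself flag as unresolved. The dichotomy on the minimum chord span $s_0$ does not close the argument: a single chord of span $s_0$ gives exactly the two lengths $s_0+1$ and $n-s_0+1$, not an interval, and ``combining this chord with others or with an $F_L$-gadget'' is not a mechanism until you specify, for each target $t\in(L,\delta n]\cup[(1-\delta)n,n-1]$, a concrete family of $O(1)$-chord configurations so numerous that removing a $(\tfrac12-\varepsilon)$-fraction of $E(G(n,p))$ cannot kill all of them. Your sketch neither defines these families nor counts them, and the heuristics you offer (bounded codegrees, no dense spots, ``positive fraction of vertices incident to $\Omega(\varepsilon np)$ chords'') do not by themselves produce, for a \emph{given} $t$, a surviving configuration of the right length.

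The paper's solution is to stay with transference for the entire linear range as well. For each $l\in\{0,\dots,\delta n\}$ it defines an explicit $4$-edge configuration tied to the Hamilton labeling --- an \emph{$l$-shortcut}, four chords anchored at $i_1,i_1{+}1,i_2,i_2{+}1,i_3,i_3{+}1,i_4,i_4{+}l{+}1$ on $C_n$ --- whose union with $C_n$ contains cycles of length $l+8$ \emph{and} $n-l$ simultaneously. A supersaturation lemma (Lemma~\ref{lemma_saturation}) shows every $(\tfrac12+\varepsilon')$-dense subgraph of $K_n$ contains $\Omega(n^4)$ $l$-shortcuts for every $l\le\tfrac{\varepsilon'}{16}n$ and every labeling; a direct second-moment computation (Lemma~\ref{lemma_Kpbounded}) shows the associated $4$-uniform hypergraph is $(K,n^{-1/2})$-bounded. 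Theorem~\ref{thm_transference} then yields an exponential failure probability that survives the union bound over $n!$ labelings and $O(n)$ values of $l$. The key idea you are missing is precisely this: the right gadget is not a fixed graph like the fan but an $l$-indexed family of $4$-chord patterns relative to the Hamilton cycle, and one runs transference on \emph{that} family rather than attempting ad hoc surgery.
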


If we choose $C$ to be large enough, then the existence of cycles of lengths 3 to 7
directly follows from 
Haxell, Kohayakawa, and {\L}uczak's Theorem \ref{thm_resilientsmallcycle} (odd cycles),
and Bondy and Simonovits' Theorem \ref{thm_bondysimonovits} (even cycles). Thus, in the remainder
of this section we will only focus on cycles of length 8 and above.

Fix a labeling of the vertices of the complete graph $K_n$ with numbers from the set $\modulon$ of remainders modulo $n$. Throughout this section, addition of the elements of $\modulon$ will be performed modulo $n$. Let $C_n$ be the subgraph of $K_n$ consisting of the edges $\{i, i+1\}$ for all $i \in \modulon$. In order to better visualize our argument, we will from now on assume that $C_n$ is drawn as a circle and the vertices $0, \ldots, n-1$ lie in the clockwise order on that circle. Moreover, for each $k \in \modulon$, we denote its distance from $0$ on that circle by $\|k\|$; more precisely, let
\[
\| k \| = \min \left\{ x \geq 0 \colon x \equiv k \mod n \text{ or } x \equiv -k \mod n \right\}.
\]
Given an integer $l$ with $0 \leq l \leq \frac{n}{2}$, we will call a subgraph $X$ of $K_n \setminus C_n$ an $l$-shortcut if it is of one of the following types:
\begin{itemize}
  \setlength{\itemsep}{1pt}
  \setlength{\parskip}{0pt}
  \setlength{\parsep}{0pt}
\item[(i)]
  There are $i_1, i_2, i_3, i_4 \in \modulon$, such that $i_1, i_1 + 1, i_2, i_2 + 1, i_3, i_3 + 1, i_4$ and $i_4 + l + 1$ are all distinct and lie in the clockwise order on the cycle $C_n$, and $X$ consists of the edges $\{i_1, i_3\}, \{i_1 + 1, i_4\}, \{i_2, i_4 + l + 1\}$, and $\{i_2 + 1, i_3 + 1\}$.
\item[(ii)]
  There are $i_1, i_2, i_3, i_4 \in \modulon$, such that $i_1, i_1 + 1, i_2, i_2 + 1, i_4, i_4 + l + 1, i_3$ and $i_3 + 1$ are all distinct and lie in the clockwise order on the cycle $C_n$, and $X$ consists of the edges $\{i_1, i_3\}, \{i_1 + 1, i_4\}, \{i_2, i_4 + l + 1\}$, and $\{i_2 + 1, i_3 + 1\}$.
\end{itemize}
Observe that for every $l \in \{0, \ldots, \frac{n}{2}\}$ and an $l$-shortcut $X$, the graph $C_n \cup X$ contains cycles of lengths $l + 8$ and $n - l$, see Figure~\ref{fig:shortcuts}.

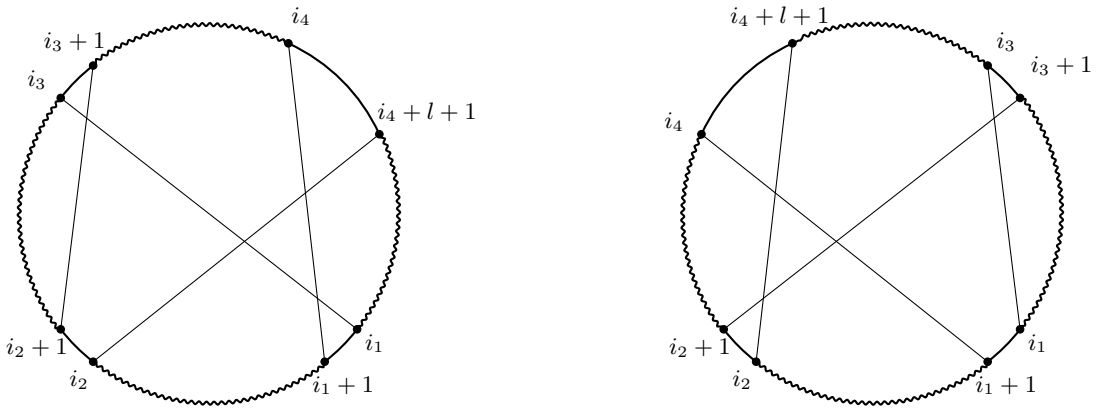
\begin{figure}[b]
  \centering
  \begin{tabular}{ccc}
    %%
%% $l$-shortcut, type (i)
%%

\begin{tikzpicture}
  \foreach \a / \r / \j in {
    128/29/i_3+1, 142/29/i_3, 
    218/29/i_2+1, 232/28/i_2,
    308/29/i_1+1, 322/28/i_1,
    25/32/i_4+l+1, 65/29/i_4
  }
  {
    \draw [fill=black] (\a:25mm) circle (0.5mm);
    \draw (\a:\r mm) node {\footnotesize{$\j$}};
  }

  \foreach \a / \b in {142/218, 232/308, -38/25, 65/128}
  {
    \draw [thick, decorate, decoration={snake, amplitude=.2mm, segment length=1mm}] (\a:25mm) arc (\a:\b:25mm);
  }

  \foreach \a / \b in {128/142, 218/232, 308/322, 25/65}
  {
    \draw [thick] (\a:25mm) arc (\a:\b:25mm);
  }

  \foreach \a / \b in {-38/142, 308/65, 218/128, 232/25}
  {
    \draw (\a:25mm) -- (\b:25mm);
  }  
\end{tikzpicture} & \hspace{0.5in} & %%
%% $l$-shortcut, type (ii)
%%

\begin{tikzpicture}
  \foreach \a / \r / \j in {
    115/29/i_4+l+1, 155/29/i_4, 
    218/29/i_2+1, 232/28/i_2,
    308/29/i_1+1, 322/28/i_1,
    38/32/i_3+1, 52/29/i_3
  }
  {
    \draw [fill=black] (\a:25mm) circle (0.5mm);
    \draw (\a:\r mm) node {\footnotesize{$\j$}};
  }

  \foreach \a / \b in {155/218, 232/308, -38/38, 52/114}
  {
    \draw [thick, decorate, decoration={snake, amplitude=.2mm, segment length=1mm}] (\a:25mm) arc (\a:\b:25mm);
  }

  \foreach \a / \b in {115/155, 218/232, 308/322, 38/52}
  {
    \draw [thick] (\a:25mm) arc (\a:\b:25mm);
  }

  \foreach \a / \b in {-38/52, 308/155, 218/38, 232/115}
  {
    \draw (\a:25mm) -- (\b:25mm);
  }
  
\end{tikzpicture}
  \end{tabular}
  \caption{Two $l$-shortcuts of types (i) and (ii), respectively; marked are the cycles of length $n-l$.}
  \label{fig:shortcuts}
\end{figure}

Our strategy will be to show that a.a.s.~every subgraph $G'$ of $G(n,p)$ with more than $(\frac{1}{2} + \varepsilon)\frac{n^2p}{2}$ edges contains an $l$-shortcut for each $l \in \{0, \ldots, \delta n\}$ and every possible labeling of the vertices of $G'$. We will first prove it when $G(n,p)$ above is replaced by the complete graph ($p=1$), and extend it to all range of $p$ by applying Theorem~\ref{thm_transference}. 

\begin{LEMMA}
  \label{lemma_saturation}
  For every positive $\varepsilon_0$, there exists an integer $n_0$ such that if $\varepsilon' \ge \varepsilon_0$ and $n \ge n_0$, then every $n$-vertex graph $G'$ with $e(G') \geq (\frac{1}{2} + \varepsilon')\frac{n^2-n}{2}$ contains at least $(\frac{\varepsilon'}{16})^8 n^4$ $l$-shortcuts for every $l \in \{0, \ldots, \frac{\varepsilon'}{16} n\}$ and every labeling of the vertex set of $G'$ with $\modulon$.
\end{LEMMA}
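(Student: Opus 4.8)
The goal is essentially a counting statement: every graph with edge density exceeding $1/2$ by a margin $\varepsilon'$ contains many $l$-shortcuts, for all short $l$ and all labelings. Since a labeling is fixed (identifying the vertices with $\modulon$, hence fixing the cycle $C_n$ and the circular order), the relevant structure is the dense graph $G' = K_n \setminus C_n$-part, i.e. the ``chords'' of the circle. An $l$-shortcut (type (i) or (ii)) is a configuration of four chords $\{i_1,i_3\}, \{i_1+1,i_4\}, \{i_2,i_4+l+1\}, \{i_2+1,i_3+1\}$ pinned down by the four parameters $i_1,i_2,i_3,i_4$ subject to a cyclic-order constraint. So what I need is: among the roughly $n^4$ choices of $(i_1,i_2,i_3,i_4)$ in the correct cyclic order, a positive proportion (at least $(\varepsilon'/16)^8$) give four chords all present in $G'$.

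First I would reduce to a clean statement about finding a single edge: call a pair $\{a,b\}$ \emph{bad} if it is \emph{not} an edge of $G'$ and \emph{not} an edge of $C_n$. The hypothesis $e(G') \ge (\frac12 + \varepsilon')\binom{n}{2}$ means there are at most $(\frac12 - \varepsilon')\binom{n}{2} + n$ bad pairs, i.e. the number of \emph{good} pairs (edges of $G'$ that are not circle edges) is at least $(\frac12+\varepsilon')\binom n2 - n \ge (\frac12 + \varepsilon'/2)\binom n2$ for $n$ large. So I want to count quadruples $(i_1,i_2,i_3,i_4)$, in the prescribed cyclic order, such that all four chords listed above are good edges. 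The natural tool is a greedy/iterative selection: pick the parameters one at a time, each time discarding the choices that would make the corresponding chord bad. Concretely, I'd set up nested sums: for each ordered choice of the four ``anchor arcs'' on the circle, the four chords form a matching on eight vertices $i_1, i_1+1, i_2, i_2+1, i_3, i_3+1, i_4, i_4+l+1$, and one can select, say, $i_3$ and $i_3+1$ last; after $i_1, i_1+1, i_2, i_2+1, i_4, i_4+l+1$ are fixed, the constraints on $i_3$ are: $\{i_1, i_3\}$ good and $\{i_2+1, i_3+1\}$ good (a constraint on $i_3$ through a fixed shift), plus $i_3$ must land in the correct arc. Each ``good chord through a fixed endpoint'' constraint kills at most $(\frac12 - \varepsilon'/2)n + O(1)$ candidates for the free endpoint; two such constraints kill at most $(1 - \varepsilon')n + O(1)$, leaving $\ge \varepsilon' n - O(1) \ge (\varepsilon'/2) n$ good choices for $i_3$ (this is where we need $l \le (\varepsilon'/16)n$ to be small: the arc lengths are flexible, and we have enough room). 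Iterating this vertex by vertex — roughly, each new vertex contributes a factor $\ge (\varepsilon'/c) n$ for a small absolute constant $c$, over four vertices (the other four being $+1$ or $+l+1$ translates, not free) — yields at least $(\varepsilon'/16)^8 n^4$ shortcuts once the bookkeeping of the arc/order constraints is done; the exponent $8$ and constant $16$ are generous, absorbing the $O(1)$ additive losses, the split into types (i) and (ii), and the fact that it's really four free vertices each contributing a density factor roughly $\varepsilon'$, squared per vertex because two good-edge constraints apply.

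The main obstacle, and the part requiring real care rather than routine estimation, is handling the cyclic-order constraints simultaneously with the density constraints: when I fix a free endpoint, it must both avoid the $\le (1-\varepsilon')n$ ``bad'' positions \emph{and} lie in a prescribed open arc determined by the already-chosen points and by $l$. I need to argue that these two restrictions can always be satisfied together with room to spare, which is why $l$ must be bounded by a small multiple of $\varepsilon' n$ (so the arc $[i_4, i_4+l+1]$ is short and doesn't eat up too much of the circle) and why I should choose the order of selection cleverly — e.g. first lay down the ``long'' arcs $i_1 \to i_2$, $i_2+1 \to i_3$ (or the type-(ii) variant), $i_3+1 \to i_4$, $i_4+l+1 \to i_1$, ensuring each has length $\ge \varepsilon' n / 10$, say, which already fixes most of the combinatorial shape, and only then count the good-edge constraints inside the remaining freedom. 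A secondary (minor) obstacle is making the $n_0 = n_0(\varepsilon_0)$ dependence explicit: all the $O(1)$ terms (from the $+1$ and $+l+1$ shifts, from $\|k\|$, from the $-n$ in the edge count) must be dominated by $\varepsilon_0 n$, which holds for $n \ge n_0(\varepsilon_0)$ and uniformly for all $\varepsilon' \ge \varepsilon_0$. I would organize the write-up by first stating the ``good-pair density'' reduction as a one-line claim, then doing the nested-sum count with the selection order fixed, proving the per-step lower bound $\ge (\varepsilon'/16) n$ on surviving choices as a small sub-claim, and finally multiplying through.
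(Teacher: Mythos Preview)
Your greedy per-step estimate is where the argument breaks. You write that ``each `good chord through a fixed endpoint' constraint kills at most $(\tfrac12 - \varepsilon'/2)n + O(1)$ candidates for the free endpoint''. But the fixed endpoint here is a \emph{specific} vertex --- one of $i_1, i_1+1, i_2, i_2+1$ --- and the global edge count $e(G') \ge (\tfrac12+\varepsilon')\binom{n}{2}$ tells you nothing about the degree of a particular vertex. A vertex of $G'$ can have degree zero; then the constraint ``$\{v,x\}$ is good'' kills all candidates for $x$, not $(\tfrac12-\varepsilon'/2)n$. Even if you try to pick $i_1$ and $i_2$ first among high-degree vertices, the vertices $i_1+1$ and $i_2+1$ are forced by the definition of a shortcut and may have arbitrarily small degree. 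So the inclusion--exclusion step ``two constraints kill $\le (1-\varepsilon')n$, leaving $\ge \varepsilon' n$'' simply fails: you need $\deg_{G'}(i_1)+\deg_{G'}(i_2+1) > n$ and $\deg_{G'}(i_1+1)+\deg_{G'}(i_2) > n$, and neither follows from the edge count alone.

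The paper's proof addresses exactly this point, and it is the heart of the argument. First, an averaging shows that at least $\tfrac{\varepsilon'}{4}n$ consecutive pairs $(2i,2i+1)$ have $\deg_{G'}(2i)+\deg_{G'}(2i+1)\ge (1+\tfrac{\varepsilon'}{2})n$; call this set $I$. This alone is not enough, because what you need for the shortcut count is the \emph{cross} sums $\deg_{G'}(2i)+\deg_{G'}(2j+1)$ for $i\ne j$. The key trick is a second pigeonhole: restrict to $I'\subset I$ on which $\deg_{G'}(2i)$ lies in a window of width $\tfrac{\varepsilon'}{4}n$. Then for any $i,j\in I'$ one has $\deg_{G'}(2i)+\deg_{G'}(2j+1)\ge \deg_{G'}(2j)+\deg_{G'}(2j+1)-\tfrac{\varepsilon'}{4}n \ge (1+\tfrac{\varepsilon'}{4})n$, and \emph{now} the inclusion--exclusion count for $i_3$ and for $i_4$ goes through. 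A further pigeonhole clusters the chosen indices on a short arc of $C_n$, which is what handles the cyclic-order constraint you were worried about; that part is indeed routine. Your plan identifies the wrong main obstacle: the arc constraints are easy, the degree-sum constraints are where the work is.
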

\begin{proof}
Let $n_0 = \frac{128}{\varepsilon_0^2}$ and assume that $\varepsilon_0 \le \varepsilon'$ and $n \ge n_0$. 
Fix a labeling of the vertex set of $G'$ with $\modulon$. For a $k \in \modulon$, let $\deg_{G'}(k)$ denote the degree in $G'$ of the vertex with the label $k$. By our assumption on $e(G')$ and $n$,
  \[
  \sum_{i = 0}^{\lfloor\frac{n}{2}-1\rfloor}\big( \deg_{G'}(2i) + \deg_{G'}(2i+1) \big) \geq 2e(G') - \deg_{G'}(n-1) \geq 2e(G') - n \geq (1 + \varepsilon')\frac{n^2}{2},
  \]
  and hence the set $I$ defined by
  \[
  I = \big\{ i \in \{0, \ldots, \lfloor\frac{n}{2}-1\rfloor\} \colon \deg_{G'}(2i) + \deg_{G'}(2i+1) \geq (1 + \frac{\varepsilon'}{2})n \big\}
  \]
  contains at least $\frac{\varepsilon'}{4}n$ elements. For a $k$ with $0 \leq k \leq (1-\frac{\varepsilon'}{4})n$, let $I(k) = \{i \in I \colon \deg_{G'}(2i) \in [k, k+\frac{\varepsilon'}{4}n)\}$. A simple averaging argument implies that there exists a $k$ such that
  \[
  |I(k)| \geq \frac{|I|}{\lceil\frac{4}{\varepsilon'}\rceil} \geq \frac{\varepsilon'}{8}|I| \geq \frac{(\varepsilon')^2}{32}n.
  \]
  Fix any such $k$ and let $I' = I(k)$. Observe that for all $i, j \in I'$, we have $|\deg_{G'}(2i) - \deg_{G'}(2j)| \leq \frac{\varepsilon'}{4}n$, and hence
  \begin{equation}
    \label{eq:deg2i2jp}
    \deg_{G'}(2i) + \deg_{G'}(2j+1) \geq \deg_{G'}(2j) + \deg_{G'}(2j+1) - \frac{\varepsilon'}{4}n \geq \left(1 + \frac{\varepsilon'}{4}\right)n.
  \end{equation}
Another simple averaging argument implies that there is a subset $I'' \subset I'$ with at least $\frac{\varepsilon'}{16} |I'|$ elements, such that for all $i,j \in I''$, the distance between $2i$ and $2j$ on the cycle $C_n$ satisfies $\|2i - 2j\| \leq \frac{\varepsilon'}{16}n$. The assertion of Lemma~\ref{lemma_saturation} now easily follows from the following claim.

\begin{CLAIM}
  \label{claim_shortcuts}
  For every pair $i, j$ of distinct elements of $I''$, and every $l$ with $0 \leq l \leq \frac{\varepsilon'}{16} n$, there are at least $\left(\frac{\varepsilon'}{16}n\right)^2$ $l$-shortcuts with $\{i_1, i_2\} = \{2i, 2j\}$.
\end{CLAIM}
\begin{proof}[Proof of Claim~\ref{claim_shortcuts}]
Fix an $l$ with $0 \le l \le \frac{\varepsilon'}{16} n$. By the definition of $I''$, $\|2i - 2j\| \leq \frac{\varepsilon'}{16}n$. Let $\{i_1, i_2\} = \{2i, 2j\}$, such that $i_2 = i_1 + k$, for some $0 < k \leq \frac{\varepsilon'}{16}n$, where the addition is modulo $n$, i.e., $i_2$ lies in distance at most $\frac{\varepsilon'}{16}n$ from $i_1$ while moving around $C_n$ in the clockwise direction. Denote by $A$ the set $\{i_2 + 2, \ldots, i_1 - 1\}$ of vertices of $C_n$ lying on the longer of the two arcs connecting $i_2 + 1$ to $i_1$. Clearly, $|A| \geq \left(1-\frac{\varepsilon'}{16}\right)n - 2$. Let $A' = \{i \in A \colon i+l+1 \in A\}$ and note that $|A'| \geq |A| - (l+1) \geq (1-2\cdot\frac{\varepsilon'}{16}-\frac{\varepsilon'}{16})n$. Let
  \[
  B = \big\{i \in A' \colon \{i_1 + 1, i\}, \{i_2, i+l+1\} \in E(G')\big\}.
  \]
  Moreover, let $N_1 = N_{G'}(i_1 + 1)$ and $N_2 = \{i \in \modulon \colon i + l + 1 \in N_{G'}(i_2)\}$, and note that $B = A' \cap N_1 \cap N_2$. By \eqref{eq:deg2i2jp},
  \begin{eqnarray}
    \label{ineq:B}
    \left( 1 + \frac{\varepsilon'}{4} \right)n & \leq & \deg_{G'}(i_1 + 1) + \deg_{G'}(i_2) = |N_1| + |N_2| = |N_1 \cup N_2| + |N_1 \cap N_2| \\
    \nonumber
    & = & |N_1 \cup N_2| + |(\modulon - A') \cap N_1 \cap N_2| + |A' \cap N_1 \cap N_2| \leq 2n - |A'| + |B|,
  \end{eqnarray}
  and therefore $|B| \geq (\frac{\varepsilon'}{4} - 2\cdot\frac{\varepsilon'}{16} - \frac{\varepsilon'}{16})n \geq \frac{\varepsilon'}{16}n$. Fix some $i_4 \in B$, let $J = \{i_4, \ldots, i_4+l+1\}$, $A'' = \{i \in A \setminus J \colon i+1 \in A \setminus J\}$, and note that $|A''| \geq |A| - |J| - 2 \geq (1 - 2\cdot\frac{\varepsilon'}{16} - \frac{\varepsilon'}{16})n$. Moreover, let
  \[
  C = \big\{i \in A'' \colon \{i_1, i\}, \{i_2 + 1, i + 1\} \in E(G') \big\}.
  \]
  Similarly as in \eqref{ineq:B},
  \[
  \left( 1 + \frac{\varepsilon'}{4} \right)n \leq \deg_{G'}(i_1) + \deg_{G'}(i_2+1) \leq 2n - |A''| + |C|,
  \]
  and therefore $|C| \geq (\frac{\varepsilon'}{4} - 2\cdot\frac{\varepsilon'}{16} - \frac{\varepsilon'}{16})n \geq \frac{\varepsilon'}{16}n$. Finally, note that for every $i_3 \in C$, the graph $X \subset G'$ consisting of the edges $\{i_1, i_3\}, \{i_1+1, i_4\}, \{i_2, i_4 + l + 1\}$, and $\{i_2 + 1, i_3 + 1\}$ is an $l$-shortcut.
\end{proof}

To finish the proof of Lemma~\ref{lemma_saturation}, note that by Claim~\ref{claim_shortcuts}, the total number of $l$-shortcuts is at least
\[
{|I''| \choose 2} \cdot \left(\frac{\varepsilon'}{16}n\right)^2 \geq \frac{|I''|^2}{4} \cdot \left(\frac{\varepsilon'}{16}\right)^2 \cdot n^2 \geq \frac{|I'|^2}{4} \cdot \left(\frac{\varepsilon'}{16}\right)^4 \cdot n^2 \geq \left(\frac{\varepsilon'}{16}\right)^8 \cdot n^4.
\]
\end{proof}

Given integers $l$ and $n$ with $0 \leq l \leq \frac{n}{2}$, let $H_n^l$ be the $4$-uniform hypergraph on the vertex set $V(H_n^l) = E(K_n)$ whose hyperedges are all $l$-shortcuts in $K_n \setminus C_n$. More precisely, every hyperedge corresponds to some four edges in the graph (four vertices in the hypergraph) that form an $l$-shortcut. Since the sequence $(i_1, i_2, i_3, i_4)$ uniquely determines an $l$-shortcut, one can observe that
\begin{equation}
  \label{ineq:EHln}
  |V(H^l_n)| = |E(K_n)| = \frac{n^2-n}{2} \quad\text{and}\quad cn^4 \leq |E(H^l_n)| \leq n^4,
\end{equation}
where $c$ is some positive constant that does not depend on $l$ or $n$. In order to apply Theorem~\ref{thm_transference} to the hypergraphs $H^l_n$ and find $l$-shortcuts in subgraphs of random graphs, 
we need to show that they are well-behaved, i.e., $H_n^l$ is $(\alpha, f, \varepsilon_0)$-dense (see Definition~\ref{dfn_alphadense}) and $(K,(n^{-1/2}))$-bounded (see Definition~\ref{dfn_Kpbounded}) for some suitably chosen $\alpha$, $f$, $\varepsilon_0$, and $K$. 

The following statement is an immediate corollary of Lemma \ref{lemma_saturation}.

\begin{COR}
  \label{cor_shortcuts}
  Let $f \colon (0, 1) \rightarrow (0,1)$ be the (non-decreasing) function defined by $f(\varepsilon') = (\frac{\varepsilon'}{16})^8$ for all $\varepsilon' \in (0,1)$. For every positive $\varepsilon_0$, there exist a positive $\delta$ and an integer $n_0$ such that for all $l$ and $n$ satisfying $n \ge n_0$ and $0 \leq l \leq \delta n$, the hypergraph $H^l_n$ is $(\frac{1}{2}, f, \varepsilon_0)$-dense.
\end{COR}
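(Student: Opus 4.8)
This corollary is essentially a repackaging of Lemma~\ref{lemma_saturation}, so I do not expect a genuine obstacle: all the combinatorial content has already been extracted. The two points that require (mild) care are the translation between subsets of $V(H^l_n)$ and subgraphs of $K_n$, and the fact that the constant $\delta$ must be chosen independently of $\varepsilon$. The latter works simply because the admissible range of shortcut lengths in Lemma~\ref{lemma_saturation} is $[0, \frac{\varepsilon'}{16}n]$, which only grows with $\varepsilon'$, so it suffices to use the value at $\varepsilon' = \varepsilon_0$.

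Concretely, given $\varepsilon_0 > 0$ I would set $\delta := \varepsilon_0/16$ and let $n_0$ be the constant supplied by Lemma~\ref{lemma_saturation} for this $\varepsilon_0$. Fix $n \ge n_0$ and $l$ with $0 \le l \le \delta n$. To verify that $H^l_n$ is $(\frac{1}{2}, f, \varepsilon_0)$-dense, I must show that for every $\varepsilon \ge \varepsilon_0$ and every $U \subseteq V(H^l_n)$ with $|U| \ge (\frac{1}{2} + \varepsilon)|V(H^l_n)| = (\frac{1}{2} + \varepsilon)\frac{n^2-n}{2}$, one has $|E(H^l_n[U])| \ge f(\varepsilon)|E(H^l_n)|$. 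If $\varepsilon > \frac{1}{2}$ then $(\frac{1}{2}+\varepsilon)\frac{n^2-n}{2} > \frac{n^2-n}{2} = |V(H^l_n)|$, so no such $U$ exists and the condition is vacuous (in particular this disposes of all $\varepsilon \ge 1$, where $f$ is not defined); hence I may assume $\varepsilon_0 \le \varepsilon \le \frac{1}{2}$, so that $f(\varepsilon) = (\frac{\varepsilon}{16})^8$ is well defined.

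Now recall $V(H^l_n) = E(K_n)$, so $U$ is precisely the edge set of a graph $G'$ on the vertex set of $K_n$ with $e(G') \ge (\frac{1}{2} + \varepsilon)\frac{n^2-n}{2}$; and since the hyperedges of $H^l_n$ are exactly the $l$-shortcuts of $K_n \setminus C_n$ relative to the fixed labeling by $\modulon$, the hyperedges of $H^l_n[U]$ are exactly the $l$-shortcuts all four of whose edges lie in $G'$, i.e.\ the $l$-shortcuts contained in $G'$. Since $\delta n = \frac{\varepsilon_0}{16}n \le \frac{\varepsilon}{16}n$, Lemma~\ref{lemma_saturation} applies with $\varepsilon' = \varepsilon$ and with the given labeling, and it produces at least $(\frac{\varepsilon}{16})^8 n^4 = f(\varepsilon)\, n^4$ such $l$-shortcuts. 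Combining this with the bound $|E(H^l_n)| \le n^4$ from~\eqref{ineq:EHln} yields $|E(H^l_n[U])| \ge f(\varepsilon)\, n^4 \ge f(\varepsilon)\,|E(H^l_n)|$, which is exactly what is required. Finally, $f$ is non-decreasing on $(0,1)$ since $\varepsilon' \mapsto (\varepsilon'/16)^8$ is increasing, completing the verification.
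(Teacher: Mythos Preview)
Your proof is correct and is precisely the intended deduction: the paper itself does not give any details, simply declaring the statement an immediate corollary of Lemma~\ref{lemma_saturation}, and what you have written is exactly the natural unpacking of that remark (identifying $U \subseteq V(H^l_n)=E(K_n)$ with a subgraph $G'$, applying Lemma~\ref{lemma_saturation} with the fixed labeling and $\varepsilon'=\varepsilon$, and comparing against the upper bound $|E(H^l_n)|\le n^4$ from~\eqref{ineq:EHln}). Your choice $\delta=\varepsilon_0/16$ and your handling of the vacuous range $\varepsilon>\tfrac12$ are both fine.
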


Our next lemma establishes $(K,(n^{-1/2}))$-boundedness of the hypergraphs $H_n^l$.

\begin{LEMMA}
  \label{lemma_Kpbounded}
  There exists a positive constant $K$ such that for all $n$ and $l$ with $0 \leq 2l \leq n$, the hypergraph $H^l_n$ is $(K, (n^{-1/2}))$-bounded.
\end{LEMMA}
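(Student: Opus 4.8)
The plan is to verify Definition~\ref{dfn_Kpbounded} directly with $k = 4$ and $p = n^{-1/2}$, so that we must bound $\mathbb{E}\big[\sum_{v} \deg_i(v, V(H^l_n)_q)^2\big]$ for every $i \in \{1,2,3\}$ and every $q \in [n^{-1/2}, 1]$. Since $|V(H^l_n)| = \binom{n}{2}$ and $|E(H^l_n)| = \Theta(n^4)$ by~\eqref{ineq:EHln}, the quantity $Kq^{2i}\,|E(H^l_n)|^2/|V(H^l_n)|$ is of order $Kq^{2i}n^6$, so it suffices to prove that $\mathbb{E}\big[\sum_{v} \deg_i(v, V(H^l_n)_q)^2\big] = O(q^{2i}n^6)$ with an implicit constant that does not depend on $n$ or $l$; then any sufficiently large absolute $K$ works.

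The combinatorial input is the following counting estimate, whose constants are absolute (in particular independent of $l$): every edge of $K_n$ lies in $O(n^2)$ $l$-shortcuts, every two edges of $K_n$ lie simultaneously in $O(n)$ of them, and every three edges lie simultaneously in $O(1)$ of them. To see this, recall that an $l$-shortcut is specified by its type, (i) or (ii), together with the quadruple $(i_1,i_2,i_3,i_4)$, and that its four edges $\{i_1,i_3\}$, $\{i_1+1,i_4\}$, $\{i_2,i_4+l+1\}$, $\{i_2+1,i_3+1\}$ involve the coordinate pairs $\{i_1,i_3\}$, $\{i_1,i_4\}$, $\{i_2,i_4\}$, $\{i_2,i_3\}$, which form a $4$-cycle on $\{i_1,i_2,i_3,i_4\}$. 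Hence prescribing one of the four edges (as an unordered pair of vertices) pins down two coordinates up to $O(1)$ choices, prescribing two edges pins down at least three coordinates, and prescribing three edges pins down all four; summing over the $O(1)$ ways of assigning the given edges to roles and over the two types gives the three bounds. Shortcuts in which some of the four edges coincide can only lower these counts, so the upper bounds are safe.

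Now fix a vertex $v$ of $H^l_n$ and write $X_v = \deg_i(v, V(H^l_n)_q)$; let $d_v$ denote the number of hyperedges (that is, shortcuts) containing $v$, so $d_v = O(n^2)$ by the first bound above. Expanding the square, $\mathbb{E}[X_v^2] = \sum_{A,B}\Pr\big[|A'\cap V(H^l_n)_q|\ge i \text{ and } |B'\cap V(H^l_n)_q|\ge i\big]$, where the sum runs over ordered pairs of hyperedges $A,B \ni v$ and $A' = A\setminus\{v\}$, $B' = B\setminus\{v\}$ each have three elements. A union bound over $i$-subsets $S\subseteq A'$ and $T\subseteq B'$ bounds this probability by $\binom{3}{i}^2 q^{2i-\min(i,\,|A'\cap B'|)}$, because $|S\cup T| = 2i - |S\cap T| \ge 2i - \min(i,|A'\cap B'|)$ and $q\le 1$. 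Grouping the pairs $(A,B)$ by $s := |A'\cap B'| \in \{0,1,2,3\}$ and bounding the number of pairs with $|A'\cap B'|\ge s$ by $d_v\binom{3}{s}$ times the number of shortcuts containing a fixed set of $s+1$ edges, we obtain from the counting estimate
\[
\mathbb{E}[X_v^2] = O\big(q^{2i}n^4 + q^{2i-1}n^3 + q^{2i-\min(i,2)}n^2 + q^i n^2\big).
\]
Summing over the $\binom{n}{2} = O(n^2)$ vertices $v$ multiplies each term by $n^2$, and one checks term by term that each of the resulting four summands is $O(q^{2i}n^6)$ as soon as $q \ge n^{-1/2}$ --- the inequalities needed are $qn\ge 1$, $q^{\min(i,2)}n^2\ge 1$, and $q^i n^2 \ge 1$, all of which hold since $q \ge n^{-1/2}$ and $i\le 3$. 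This yields $\mathbb{E}\big[\sum_v X_v^2\big] = O(q^{2i}n^6)$, completing the proof.

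I expect the main obstacle to be the bookkeeping behind the counting estimate: one must be careful that the bounds on the number of shortcuts through a prescribed set of edges are genuinely uniform in $l$, and that the two shortcut types and the possible coincidences among the four edges of a shortcut are handled (they only help). The probabilistic part is then a routine second-moment calculation, and the choice $p = n^{-1/2}$ is exactly what makes the error terms $q^{2i-1}n^5$, $q^{2i-\min(i,2)}n^4$, and $q^i n^4$ negligible compared with the main term $q^{2i}n^6$.
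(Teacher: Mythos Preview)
Your proof is correct and follows essentially the same approach as the paper: both arguments expand the second moment as a sum over pairs of shortcuts through a fixed edge, group by the size of their intersection, use the counting estimate that fixing $j$ edges of a shortcut pins down at least $j+1$ of the parameters $(i_1,i_2,i_3,i_4)$, and then verify that $q \ge n^{-1/2}$ makes every error term dominated by the main term $q^{2i}n^6$. Your parametrization by $s = |A'\cap B'|$ differs from the paper's (which uses $|X_1\cap X_2|$ and an inner sum over $j$), but this is cosmetic.
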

\begin{proof}
  First, observe that once we fix $i$ edges of an $l$-shortcut for some $i \in \{1,2,3\}$, we are fixing at least $i+1$ elements in the sequence $(i_1, i_2, i_3, i_4)$ that uniquely describes this $l$-shortcut (up to at most 8! possibilities as each endpoint of an edge can correspond to $i_1, i_1+1, i_2,$ etc.). Therefore, there are absolute constants $c_1$, $c_2$, and $c_3$, such that every $i$ edges of $K_n \setminus C_n$ are contained in at most $c_in^{3-i}$ $l$-shortcuts for every $i \in \{1, 2, 3\}$ and all $n$ and $l$ with $n \geq 2l$ (we require $n \geq 2l$ in order to have enough space between $i_4$ and $i_4 + l + 1$). 

 Assume that $q \geq n^{-1/2}$, and fix integers $n$ and $l$ with $n \geq 2l$. Let $e$ be an edge of $K_n \setminus C_n$ (hence $e$ is also a vertex of $H^l_n$). For $i \in \{1, 2, 3\}$, define $d_i(e)$ as,
  \[
  d_i(e) = \Ex\left[ \deg_i(e,V(H^l_n)_q)^2 \right],
  \]
  where $\deg_i(e,V(H^l_n)_q) = |\{X \in E(H^l_n) \colon |X \cap (V(H^l_n)_q \setminus \{ e \} )| \ge i \text{ and } e \in X \}|$ is defined before Definition~\ref{dfn_Kpbounded}. Observe that $\deg_i(e,V(H^l_n)_q)^2$ is equal to
  \[
  |\{(X_1, X_2) \in E(H^l_n)^2 \colon |X_k \cap (V(H^l_n)_q \setminus \{ e \} )| \ge i \text{ for } k = 1,2 \text{ and } e \in X_1 \cap X_2 \}|.
  \]
  Thus, in order to calculate $d_i(e)$ we can consider all pairs of $l$-shortcuts $X_1, X_2$ that share the edge $e$, and consider the possible combinations of $|(X_1 \cap X_2) \cap V(H^l_n)_q|$, $|X_1 \cap V(H^l_n)_q|$, and $|X_2 \cap V(H^l_n)_q|$ that give $|X_k \cap (V(H^l_n)_q \setminus \{ e \} )| \ge i$ for $k=1,2$. By parametrizing the
size of $\abs{(X_1 \cap X_2) \cap (V(H^l_n)_q \setminus \{ e \} )}$ by $j$, this interpretation gives the inequality
  \begin{equation}
    \label{ineq_die}
    d_i(e) \le \sum_{e \in X_1 \cap X_2} \sum_{j=0}^{\min\{i,|X_1 \cap X_2| - 1\}} {4 \choose i}^2 q^{2i-j}.
  \end{equation}
  For $t \in \{1, 2, 3, 4\}$, let ${\cal F}_t = |\{(X_1, X_2) \colon |X_1 \cap X_2| \geq t, e \in X_1 \cap X_2\}|$. Once we bound the size of the sets ${\cal F}_t$, we can obtain a bound on $d_i(e)$ using inequality~\eqref{ineq_die}. As mentioned at the beginning of the proof, if we fix $t$ edges of an $l$-shortcut, then we are fixing at least $t+1$ elements of the sequence $(i_1, i_2, i_3, i_4)$ describing an $l$-shortcut (up to at most $8!$ possibilities). Since both $X_1$ and $X_2$ contain the edge $e$, at least two values are fixed for both $X_1$ and $X_2$. Thus, $|\mathcal{F}_1| = O(n^{4})$. Similarly, we have $|\mathcal{F}_t| = O(n^{5-t})$ for $t \in \{1, 2, 3\}$ and  $|\mathcal{F}_4| = O(n^{2})$. It follows that
  \begin{align*}
  d_1(e) & \leq 4^2 \cdot \left( |\mathcal{F}_1|q^2 + |\mathcal{F}_2|q \right) = O(n^4q^2 + n^3q) \leq c' n^4 q^2,\\
  d_2(e) & \leq 6^2 \cdot \left( |\mathcal{F}_1|q^4 + |\mathcal{F}_2|q^3 + |\mathcal{F}_3|q^2 \right) = O(n^4q^4 + n^3q^3 + n^2q^2) \leq c' n^4 q^4,\\
  d_3(e) & \leq 4^2 \cdot \left( |\mathcal{F}_1|q^6 + |\mathcal{F}_2|q^5 + |\mathcal{F}_3|q^4 + |\mathcal{F}_4|q^3 \right) = O(n^4q^6 + n^3q^5 + n^2q^4 + n^2q^3) \leq c' n^4 q^6
\end{align*}
for some absolute constant $c'$, where the last inequality in each row holds by our assumption that $q \ge n^{-1/2}$. Finally, recall from~\eqref{ineq:EHln} that $|E(H^l_n)|^2/|V(H^l_n)|^2 \geq cn^4$ for some positive constant $c$ not depending on $n$ or $l$. It follows that for each $i \in \{1, 2, 3\}$,
\[
\Ex\left[ \sum_{e \in V(H^l_n)} \deg_i(e,V(H^l_n)_q)^2 \right] = \sum_{e \in V(H^l_n)} \Ex\left[ \deg_i(e,V(H^l_n)_q)^2 \right] \leq \frac{c'}{c} \cdot q^{2i} \frac{|E(H^l_n)|^2}{|V(H^l_n)|},
\]
i.e., the sequence $(H^l_n)$ is $(\frac{c'}{c}, (n^{-1/2}))$-bounded.
\end{proof}

Given a positive real $\delta$, let $\Pd$ be the following graph property: An $n$-vertex graph $G$ satisfies $\Pd$ if and only if $G$ contains an $l$-shortcut for every $l$ with $0 \leq l \leq \delta n$ and every labeling of the vertices of $G$ with $\modulon$. Clearly, $\Pd$ is monotone increasing.

\begin{LEMMA}
  \label{lemma_Pd}
  For all positive $\varepsilon$, there exist positive $\delta$ and $C$ such that if $Cn^{-1/2} \leq p(n) \leq n^{-1/3}$, then $G(n,p)$ a.a.s.~satisfies the following. Every subgraph $G'$ of $G(n,p)$ with more than $(\frac{1}{2} + \varepsilon)e(G(n,p))$ edges satisfies $\Pd$.
\end{LEMMA}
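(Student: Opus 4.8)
The plan is to apply the transference theorem (Theorem~\ref{thm_transference}) to the hypergraphs $H_n^l$ together with all of their relabelings, and then to take a union bound over the shortcut lengths $l$ and the vertex labelings. First I would fix the parameters in the correct order. Set $\alpha = \tfrac12$, let $f$ be the non-decreasing function from Corollary~\ref{cor_shortcuts}, and let $K \ge 1$ be the absolute constant of Lemma~\ref{lemma_Kpbounded} (enlarge it if necessary so that $K\ge 1$). Feeding $\alpha$, $f$, $K$, and the given $\varepsilon$ into Theorem~\ref{thm_transference} — with $p_n = n^{-1/2}$, $v_n = \binom n2$, and $e_n = \lfloor c n^4\rfloor$, where $c$ is the constant from~\eqref{ineq:EHln}, so that $p_n v_n \to \infty$ and $p_n^4 e_n \to \infty$ — yields constants $b,\varepsilon_0,C_1,n_1$. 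Now apply Corollary~\ref{cor_shortcuts} with this particular $\varepsilon_0$ to obtain $\delta$ and $n_2$; shrinking $\delta$ if needed so that $\delta \le \tfrac12$, this guarantees that $H_n^l$ is $(\tfrac12,f,\varepsilon_0)$-dense for every $l\le \delta n$ and $n\ge n_2$. Finally set $C = C_1$ and $n_0 = \max(n_1,n_2)$.

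Next, for each $l$ with $0\le l\le \delta n$ and each bijection $\pi\colon\{1,\dots,n\}\to\modulon$, let $H_n^{l,\pi}$ be the $4$-uniform hypergraph on the vertex set $E(K_n)$ whose hyperedges are the $l$-shortcuts computed with respect to the cycle induced by the labeling $\pi$. Since $H_n^{l,\pi}$ is isomorphic to $H_n^l$ (the isomorphism being the relabeling of $E(K_n)$ induced by $\pi$), it inherits $(\tfrac12,f,\varepsilon_0)$-density from Corollary~\ref{cor_shortcuts} and $(K,(n^{-1/2}))$-boundedness from Lemma~\ref{lemma_Kpbounded}, and moreover $|V(H_n^{l,\pi})| = \binom n2 = v_n$ and $|E(H_n^{l,\pi})| \ge cn^4 \ge e_n$ by~\eqref{ineq:EHln}. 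The key observation is that, for any $q$, the random set $V(H_n^{l,\pi})_q$ is simply a $q$-random subset of $E(K_n)$ and hence has the distribution of $E(G(n,q))$, independently of the choice of $\pi$ and $l$. Thus, taking $q = p$ (which satisfies $C p_n = Cn^{-1/2}\le q\le n^{-1/3}$ by hypothesis), Theorem~\ref{thm_transference} tells us that for every $n\ge n_0$, with probability at least $1 - e^{-bp\binom n2}$ every subset $W\subseteq E(G(n,p))$ with $|W|\ge (\tfrac12+\varepsilon)e(G(n,p))$ contains a hyperedge of $H_n^{l,\pi}$, i.e.\ contains an $l$-shortcut with respect to the labeling $\pi$.

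To conclude, I would union-bound over the at most $n$ choices of $l$ and the $n!$ choices of $\pi$: the probability that some pair $(l,\pi)$ fails is at most $n\cdot n!\cdot e^{-bp\binom n2}$. Since $p\ge Cn^{-1/2}$ gives $bp\binom n2 = \Omega(n^{3/2})$, whereas $\log(n\cdot n!) = O(n\log n) = o(n^{3/2})$, this bound tends to $0$. Hence a.a.s.\ for every $l\le \delta n$, every labeling $\pi$, and every subgraph $G'\subseteq G(n,p)$ with $e(G')>(\tfrac12+\varepsilon)e(G(n,p))$, the edge set $E(G')$ contains an $l$-shortcut with respect to $\pi$ — which is exactly the statement that $G'$ has property $\Pd$.

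The argument is mostly a matter of assembling the previous lemmas, so there is no single deep obstacle; the points that require care are (a) getting the quantifier order right, namely that $\varepsilon$ produces $\varepsilon_0$ through Theorem~\ref{thm_transference} and only then does $\varepsilon_0$ produce $\delta$ through Corollary~\ref{cor_shortcuts}, and (b) absorbing the $n!$ labelings in the union bound, which is precisely where the hypothesis $p\gg n^{-1/2}$ is used (it makes the exponent $\Omega(n^{3/2})$ dominate the $\log(n!) = O(n\log n)$ loss). One should also verify the routine compatibility conditions of Theorem~\ref{thm_transference}: $\alpha = \tfrac12\ge 0$, $K\ge 1$, $f\colon(0,1)\to(0,1)$ non-decreasing, $n^{-1/3}\ge q\ge Cp_n$, and the divergence of $p_n v_n$ and $p_n^4 e_n$, all of which hold by the setup above.
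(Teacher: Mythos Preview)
Your proposal is correct and follows essentially the same route as the paper: fix $\alpha=\tfrac12$, $f$, $K$, feed $\varepsilon$ into Theorem~\ref{thm_transference} to obtain $b,\varepsilon_0,C,n_0$, then feed $\varepsilon_0$ into Corollary~\ref{cor_shortcuts} to obtain $\delta$, apply the transference theorem to each $H_n^l$ (for each labeling), and finish with the union bound $n!\cdot n\cdot e^{-bp\binom{n}{2}}=o(1)$. Your write-up is in fact slightly more explicit than the paper's on two points --- the relabeled hypergraphs $H_n^{l,\pi}$ and the verification that $bp\binom{n}{2}=\Omega(n^{3/2})$ beats $\log(n\cdot n!)=O(n\log n)$ --- but the argument is the same.
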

\begin{proof}
 Let $k = 4$, $\alpha = \frac{1}{2}$, $p_n = n^{-1/2}$, $v_n = {n \choose 2}$, $e_n = cn^4$, where $c$ is a positive constant that satisfies~\eqref{ineq:EHln}, and let $K$ be as in the statement of Lemma~\ref{lemma_Kpbounded}.  Let $f$ be the function defined in Corollary~\ref{lemma_Kpbounded}.
  Note that
  \[
  \lim_{n \to \infty} e_n(n^{-1/2})^4 = \infty \quad \text{and} \quad \lim_{n \to \infty} v_nn^{-1/2} = \infty,
  \]
  and let $b$, $\varepsilon_0$, $C$, and $n_0$ be the numbers satisfying the conclusion of Theorem~\ref{thm_transference} for $\alpha$, $f$, $\varepsilon$, $k$, $K$, $(p_n)$, $(v_n)$, and $(e_n)$ as above. Let $n_1$ and $\delta$ be the parameters obtained from Corollary \ref{cor_shortcuts} by using the parameter $\varepsilon_0$. 
  
  Assume that $n \geq \max\{n_1, n_0\}$ and fix an $l$ with $0 \leq l \leq \delta n$. Furthermore, fix a labeling of the vertices of $G(n,p)$ with $\modulon$. By Corollary~\ref{cor_shortcuts}, the hypergraph $H^l_n$ is $(\frac{1}{2}, f, \varepsilon_0)$-dense, and by Lemma~\ref{lemma_Kpbounded}, it is $(K, (n^{-1/2}))$-bounded. Moreover, $|V(H^l_n)| \geq v_n$ and $|E(H^l_n)| \geq e_n$. Let $p$ satisfy $Cn^{-1/2} \leq p(n) \leq n^{-1/3}$. Since $n \geq n_0$, it follows from Theorem~\ref{thm_transference} that with probability at least $1 - \exp(-bp{n \choose 2})$, every $G' \subset G(n,p)$ with $e(G') \geq (\frac{1}{2}+\varepsilon)e(G(n,p))$ contains a hyperedge of $H^l_n$, which is an $l$-shortcut with respect to our fixed labeling of the vertices. Hence, with probability at least $1 - n! \cdot n \cdot \exp(-bp{n \choose 2})$, which clearly is $1 - o(1)$, the random graph $G(n,p)$ satisfies $\Pd$.
\end{proof}

Having proved Lemma~\ref{lemma_Pd}, we are finally ready to show the existence of short and long cycles.

\begin{proof}[Proof of Theorem~\ref{thm_mainthmpart3}]
  As it was remarked at the beginning of this section, we can restrict our attention to cycles of length at least $8$. Let $C$ and $\delta$ be numbers satisfying the conclusion of Lemma~\ref{lemma_Pd} for $\frac{\varepsilon}{4}$. Moreover, let $p'(n) = Cn^{-1/2}$. Since a.a.s.~$e(G(n,p')) \leq (1+\frac{\varepsilon}{8})\frac{n^2p'}{2}$, then by Lemma~\ref{lemma_Pd}, a.a.s.~every subgraph of $G(n,p')$ with more than $(\frac{1}{2} + \frac{\varepsilon}{2})\frac{n^2p'}{2}$ edges satisfies $\Pd$. By Proposition~\ref{prop_probrestriction}, if $p(n) \geq Cn^{-1/2}$, then a.a.s.~every subgraph $G'$ of $G(n,p)$ with at least $(\frac{1}{2}+\varepsilon)\frac{n^2p}{2}$ edges satisfies $\Pd$. Finally, observe that every Hamiltonian graph with property $\Pd$ contains a cycle of length $t$ for every $t \in [8, \delta n] \cup [(1-\delta)n,n]$.
\end{proof}

%%
%%
%%  Medium Length Cycles
%%
%%

\subsection{Medium Length Cycles}

\begin{THM} \label{thm_mainthmpart2}
  For all positive $\varepsilon$ and $\delta$, there exists a constant $C$ such that if $p \geq Cn^{-1/2}$, then $G(n,p)$ a.a.s.~satisfies the following: Every Hamiltonian subgraph $G'$ of $G(n,p)$ with more than $(\frac{1}{2}+\varepsilon)\frac{n^2p}{2}$ edges contains a cycle of length $t$ for all $t \in [\delta n, (1-\delta)n]$.
\end{THM}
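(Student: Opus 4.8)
My plan is to reuse, essentially verbatim, the machinery developed for Theorem~\ref{thm_mainthmpart3} (a saturation lemma, a boundedness check, and the transference theorem), but built around a more flexible gadget that remains ``well spread'' for detour lengths all the way up to about $\tfrac n2$. Recall that adding an $l$-shortcut to $C_n$ creates cycles of lengths $l+8$ and $n-l$, and that Lemma~\ref{lemma_saturation} is confined to $l\le\frac{\varepsilon'}{16}n$ only because the averaging in Claim~\ref{claim_shortcuts} needs one arc $A$ of $C_n$ of length close to $n$ that is disjoint from the detour interval $\{i_4,\dots,i_4+l+1\}$ --- and such an arc ceases to exist once $l$ is linear in $n$. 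To get around this I would introduce a bigger gadget --- call it an \emph{$l$-bridge} --- consisting of a constant number $m$ of chords of $K_n\setminus C_n$ whose endpoints are described by a constant number $m'$ of free parameters in $\modulon$, arranged so that $C_n\cup X$ still contains cycles of lengths $l+O(1)$ and $n-l+O(1)$, but in which the detour of total length $\approx l$ is broken into two arcs of length $\le\frac n4+O(\delta n)$ and the complementary part of the cycle is broken similarly, so that every arc of $C_n$ used has length bounded away from $n$. One should choose the combinatorial layout so that the greedy averaging that locates the gadget only ever intersects a neighbourhood with an arc of length $\Omega_{\varepsilon',\delta}(n)$. Letting $l$ range over $[\tfrac\delta2 n,(\tfrac12+\tfrac\delta2)n]$, the lengths $l+O(1)$ and $n-l+O(1)$ then cover all of $[\delta n,(1-\delta)n]$ except for $O(1)$ boundary values, which are already supplied by Theorem~\ref{thm_mainthmpart3}.

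Granting the right gadget, the rest of the proof is a line-by-line transcription of Lemmas~\ref{lemma_saturation}--\ref{lemma_Pd} and the proof of Theorem~\ref{thm_mainthmpart3}. First comes a saturation lemma in the spirit of Lemma~\ref{lemma_saturation}: every $n$-vertex graph $G'$ with $e(G')\ge(\tfrac12+\varepsilon')\binom n2$ contains $\Omega_{\varepsilon',\delta}(n^{m'})$ $l$-bridges for every $l$ in the above range and every labelling of $V(G')$ by $\modulon$, deduced as before from the surplus degree-sum inequality $\sum_i\big(\deg_{G'}(2i)+\deg_{G'}(2i+1)\big)\ge(1+\varepsilon')\frac{n^2}2$ by repeated averaging (now over more index classes), bookkeeping which arcs of $C_n$ have been used so that every needed neighbourhood intersection stays non-empty; this immediately yields, as in Corollary~\ref{cor_shortcuts}, that the $m$-uniform hypergraph $H^l_n$ on $V(H^l_n)=E(K_n)$ whose hyperedges are the $l$-bridges is $(\tfrac12,f,\varepsilon_0)$-dense for a fixed power function $f=f(\varepsilon',\delta)$. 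Second, the same accounting as in Lemma~\ref{lemma_Kpbounded} --- fixing $i$ of the $m$ chords pins down at least $i+1$ of the $m'$ parameters --- shows the $H^l_n$ are $(K,(n^{-1/2}))$-bounded, and one checks the hypotheses $p_n^m e_n\to\infty$ and $p_n v_n\to\infty$ of Theorem~\ref{thm_transference}, which hold because the gadget is ``tight'' ($m'>m/2$). Third, feeding this into Theorem~\ref{thm_transference} and combining with Proposition~\ref{prop_probrestriction} exactly as in Lemma~\ref{lemma_Pd} and the proof of Theorem~\ref{thm_mainthmpart3} --- a union bound over all $l$ in the range and all $n!$ labellings --- gives that a.a.s.\ every $G'\subset G(n,p)$ with more than $(\tfrac12+\varepsilon)\frac{n^2p}2$ edges contains, with respect to the labelling induced by any of its Hamilton cycles, an $l$-bridge for each admissible $l$; since $G'$ is Hamiltonian, this produces cycles of every length in $[\delta n,(1-\delta)n]$.

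The only genuinely new work --- and the main obstacle --- is the saturation lemma, and this is exactly where the tightness of the constant $\tfrac12$ bites. When $G'$ is close to the extremal near-bipartite graph, every cycle of the ``wrong'' parity must route through the part of $G'$ carrying the surplus $\varepsilon'\binom n2$ edges, so most copies of the gadget are forced to concentrate wherever that surplus happens to sit; the gadget and the averaging argument therefore have to be robust enough to absorb an \emph{arbitrary} placement of the surplus. Concretely, I expect to choose the parameters $i_1,\dots,i_{m'}$ one at a time, at each step picking an index class whose degree sum exceeds $(1+\Omega(\varepsilon'))n$ (mimicking the choice of the set $I$ in Lemma~\ref{lemma_saturation}), then splitting whatever arc has grown too long and recursing, always keeping the unused arcs of size $\Omega_{\varepsilon',\delta}(n)$. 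Getting a single gadget design for which this runs uniformly for all $l$ up to $(\tfrac12+\tfrac\delta2)n$, and re-verifying $(K,(n^{-1/2}))$-boundedness for it, is the heart of the matter; the transference and probability-monotonicity steps are then routine adaptations of what has already been done.
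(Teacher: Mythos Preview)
Your plan is a reasonable extrapolation of the Section~3.1 machinery, but the paper takes a completely different and much simpler route for medium lengths, avoiding Theorem~\ref{thm_transference} altogether. The key observation is that a \emph{$2$-crossing} suffices: if $e_1\in E_i$ and $e_2\in E_{i+l}$ are two chords whose endpoints alternate on $C_n$, where $E_i=\{\{x,y\}:x+y\equiv i\pmod n\}$, then $C_n\cup\{e_1,e_2\}$ already contains a cycle of length $l+2$. The paper orders each $E_i$ linearly, uses Chernoff plus a union bound over all $n!$ labellings to show that a.a.s.\ in $G(n,p)$ all but $n^{3/4}$ directions are ``good'' (edges of $G$ distributed evenly along every short interval of $E_i$), and then performs a direct double count: the number of \emph{close} crossings in $G(n,p)$ between edges in good directions $E_i$ and $E_{i+l}$ is at least $(1-4\varepsilon'-4\beta)\beta n^3p^2$, while deleting at most $(\tfrac12-\tfrac\varepsilon2)\frac{n^2p}{2}$ edges destroys at most $(1-\varepsilon+\varepsilon')\beta n^3p^2$ of them, because goodness bounds how many close crossings any single edge participates in. No saturation lemma, no hypergraph boundedness check, no transference---just concentration and counting.

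Your approach might be salvageable in principle, but as you yourself concede, the heart of the matter---actually specifying the $l$-bridge and proving the saturation lemma uniformly over $l\in[\tfrac\delta2 n,(\tfrac12+\tfrac\delta2)n]$---is left entirely undone, and this is not a routine adaptation. The recursive ``split whichever arc has grown too long'' heuristic does not obviously terminate in a \emph{bounded} number of steps independent of $l$, and both the density claim (via averaging) and the $(K,(n^{-1/2}))$-boundedness claim (``fixing $i$ chords pins down $i{+}1$ parameters'') depend entirely on the concrete combinatorial layout of the gadget, which you never give. So as written this is a plan, not a proof, and the paper's $2$-crossing argument shows that the whole detour through larger gadgets and transference is unnecessary for this range of cycle lengths.
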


Fix a labeling of the vertices of the complete graph $K_n$ with numbers from the set $\modulon$ of remainders modulo $n$. Throughout this section, the addition of the elements of $\modulon$ will be performed modulo $n$. We define the following partition of $E(K_n)$:
\[
E(K_n) = \bigcup_{i = 0}^{n-1}E_i, \mbox{ where } E_i = \big\{\{x,y\} \colon x+y \equiv i \mod n\big\}.
\]
We will later refer to the elements of $E_i$ as the edges in {\em direction} $i$. For each $i \in \modulon$, there is a natural ordering $\le_i$ of the elements of $E_i$ defined as follows. If we evenly place the numbers from $\modulon$ on a circle, each set $E_i$ will consist of all parallel edges in certain direction. We order these edges with respect to their distance from the point $\frac{i}{2}$ (the arc from $\lfloor \frac{i}{2} \rfloor$ to $\lceil \frac{i}{2} \rceil$ in case $i$ is odd), see Figure~\ref{fig:Ei}. For example, $E_0$ consists
of the $\lfloor \frac{n}{2} \rfloor$ edges $\{x, -x\}$ for positive integers $x < \frac{n}{2}$, and $\{x_1, -x_1\} \le_0 \{x_2, -x_2\}$ for two positive integers $x_1, x_2 < \frac{n}{2}$ if and only if $x_1 \le x_2$.
%We will later refer to the elements of $E_i$ as the edges in {\em direction} $i$. For each $i \in \modulon$, %there is a natural ordering of the elements of $E_i$ defined as follows. If we evenly place the numbers from %$\modulon$ on a circle, each set $E_i$ will consist of all parallel edges in certain direction. We order %these edges with respect to their distance from the point $\frac{i}{2}$ (the arc from $\lfloor \frac{i}{2} %\rfloor$ to $\lceil \frac{i}{2} \rceil$ in case $i$ is odd), see Figure~\ref{fig:Ei}.
%Formally, we define the ordering $\leq_i$ on $E_i$ by
%\begin{eqnarray}
%  \label{def_ordering}
%  \{x_1,y_1\} \leq_i \{x_2,y_2\} & \Leftrightarrow & \|2x_1 - i\|_2 \leq \|2x_2 - i\|_2,
%\end{eqnarray}
%where $\|x\|_2 = \min\{y \geq 0 \colon y \equiv x \mod 2n \mbox{ or } y \equiv -x \mod 2n\}$. Note that every %edge $\{x,y\} \in E_i$ satisfies $2x + 2y = 2i \mod 2n$, and hence $\|2x - i\|_2 = \|2y - i\|_2$. It follows %that the ordering $\leq_i$ is well-defined. 
Since every $n$-vertex graph $G$ is a subgraph of the complete graph $K_n$, we tacitly assume that when we fix a labeling of the set of vertices of $G$ with $\modulon$, $E(G)$ inherits the partition into sets $E_i$ with their orders $\leq_i$.

Let $C_n$ be the subgraph of $K_n$ consisting of the edges $\{i,i+1\}$ for all $i \in \modulon$. We call two edges $e_1, e_2 \in E(K_n) \setminus E(C_n)$ {\em crossing} if their endpoints are all distinct and lie alternately on the cycle $C_n$. With all this notation at hand, observe that for every $i \in \modulon$ and $l \in \{2, \ldots, n-2\}$, the graph $C_n \cup \{e_1,e_2\}$, where $e_1 \in E_i$ and $e_2 \in E_{i+l}$ are crossing edges, contains cycles of lengths $l+2$ and $n-l+2$, see Figure~\ref{fig:Ei}.

\begin{figure}[h]
  \centering
  \begin{tabular}{ccc}
    %%
%% $E_i$ and $\leq_i$
%%

\begin{tikzpicture}
  \clip (-2.8,-3.0) rectangle (3.5,3.2);

  \draw [thick] (0,0) circle (25mm);

  \draw [fill=black] (45:25mm) circle (0.5mm);
  \draw (45:29mm) node {\footnotesize{$\frac{i}{2}$}};

  \draw [fill=black] (225:25mm) circle (0.5mm);
  \draw (225:30mm) node {\footnotesize{$\frac{n+i}{2}$}};

  \foreach \a / \i in {25/1,45/2,65/3}
  {
    \draw (45-\a:25mm) -- (45+\a:25mm);
    \draw [fill=black] (45-\a:25mm) circle (0.5mm);
    \draw [fill=black] (45+\a:25mm) circle (0.5mm);
    \draw (45-\a:30mm) node {\footnotesize{$\frac{i}{2}+\i$}};
    \draw (45+\a:29mm) node {\footnotesize{$\frac{i}{2}-\i$}};
    
    \draw (225-\a:25mm) -- (225+\a:25mm);
    \draw [fill=black] (225-\a:25mm) circle (0.5mm);
    \draw [fill=black] (225+\a:25mm) circle (0.5mm);
  }

  \draw [thick,->] (0,1) -- node [above,xshift=-2mm] {\small{$\leq_i$}} (-1,0);

  \foreach \x in {-0.25,0,0.25}
  {
    \draw [fill=black] (\x,\x) circle (0.2mm);
  }
\end{tikzpicture} & \hspace{0.5in} & %%
%% $l$-crossing
%%

\begin{tikzpicture}
  \clip (-2.8,-3.0) rectangle (3.5,3.2);

  \draw [fill=black] (120:25mm) circle (0.5mm);
  \draw (120:28mm) node {\footnotesize{$x$}};
  \draw [fill=black] (-50:25mm) circle (0.5mm);
  \draw (-50:29mm) node {\footnotesize{$i-x$}};
  \draw (120:25mm) -- (-50:25mm);

  \draw [fill=black] (45:25mm) circle (0.5mm);
  \draw (45:28mm) node {\footnotesize{$y$}};
  \draw [fill=black] (-100:25mm) circle (0.5mm);
  \draw (-100:28mm) node {\footnotesize{$i+l-y$}};
  \draw (45:25mm) -- (-100:25mm);

  \draw [thick, decorate, decoration={snake, amplitude=.2mm, segment length=1mm}] (45:25mm) arc (45:120:25mm);
  \draw [thick, decorate, decoration={snake, amplitude=.2mm, segment length=1mm}] (-50:25mm) arc (-50:-100:25mm);
  \draw [thick] (120:25mm) arc (120:260:25mm);
  \draw [thick] (-50:25mm) arc (-50:45:25mm);
\end{tikzpicture}
  \end{tabular}
  \caption{The set $E_i$ in the case when both $i$ and $n$ are even, the arrow points from $\leq_i$-smaller to $\leq_i$-larger elements; a crossing between an edge $\{x,i-x\} \in E_i$ and an edge $\{y, i+l-y\} \in E_{i+l}$, marked is the cycle of length $l+2$.}
    \label{fig:Ei}
\end{figure}
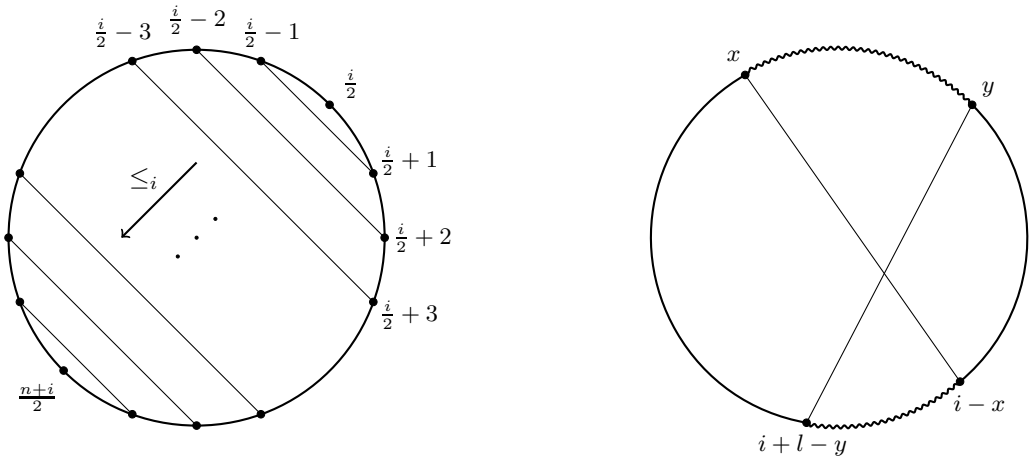

Our strategy will be to show that a.a.s.~every subgraph $G'$ of $G(n,p)$ with more than $(\frac{1}{2} + \varepsilon)\frac{n^2p}{2}$ edges contains such pair $e_1, e_2$ for each $l \in \{\delta n - 2, \ldots, (1-\delta)n \}$ and every possible labeling of the vertices of $G'$. Before we start working with the random graph, we need to introduce some more notation.

Note that even though the size of $E_i$ depends on the parity of $i$ and $n$, it always satisfies $\frac{n}{2}-1 \leq |E_i| \leq \frac{n}{2}$. Given a $\beta \in (0, \frac{1}{6})$ and a natural number $k$ with $1 \leq k \leq (\frac{1}{2} - \beta)n$, denote by $E_i^k$ the set of $\beta n$ consecutive (with respect to $\le_i$) edges in $E_i$, starting from the $k$\textsuperscript{th} smallest, i.e., $E_i^k$ is the interval of length $\beta n$ in $E_i$ whose leftmost endpoint is the $k$\textsuperscript{th} smallest element of $E_i$. Also, let $M_i \subset E_i$ denote the set of $(\frac{1}{2}-2\beta)n$ middle elements of $E_i$ in the
ordering $\le_i$, i.e., $E_i$ without the leftmost and rightmost subintervals of lengths $\beta n$. Let $G$ be a graph on the vertex set $\modulon$. For $i \in \modulon$, $\varepsilon' > 0$ and $p \in [0,1]$, we will say that the direction $E_i$ is {\em $(\beta, \varepsilon', p)$-good in $G$} (or simply {\em good}) if for all $k$ with $1 \leq k \leq (\frac{1}{2} - \beta)n$, $G$ satisfies
\[
\big||E(G) \cap E_i^k| - \beta n p\big| \leq \varepsilon' \beta n p \quad \text{and} \quad \left||E(G) \cap M_i| - \left(\frac{1}{2}-2\beta\right) n p\right| \leq \varepsilon' (\frac{1}{2} - 2\beta) n p.
\]

\begin{LEMMA}
  \label{lemma_gooddirections}
  If $p \geq Cn^{-1/2}$ for some positive constant $C$, and $\beta, \varepsilon' \in (0,\frac{1}{6})$, then a.a.s.~for every labeling of the vertices of $G(n,p)$ with $\modulon$, there are at most $n^{3/4}$ directions that are not $(\beta, \varepsilon', p)$-good in $G(n,p)$.
\end{LEMMA}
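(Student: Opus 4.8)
The plan is to fix a labeling of the vertices of $G(n,p)$ with $\modulon$ and, for a single fixed direction $E_i$ and a single fixed choice of $k$, control the random variable $|E(G(n,p)) \cap E_i^k|$, which is a binomial random variable $\Bi(\beta n, p)$ since $E_i^k$ is a set of exactly $\beta n$ distinct pairs and edges appear independently. Similarly $|E(G(n,p)) \cap M_i| \sim \Bi((\frac{1}{2}-2\beta)n, p)$. By Chernoff's inequality (Theorem~\ref{thm_Chernoff}), each of these deviates from its mean by a multiplicative $\varepsilon'$ factor with probability at most $e^{-\Omega_{\varepsilon'}(\beta n p)}$; since $p \ge Cn^{-1/2}$, we have $\beta n p \ge \beta C n^{1/2}$, so each such bad event has probability at most $e^{-c' n^{1/2}}$ for some $c' = c'(\beta, \varepsilon', C) > 0$. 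Call the direction $E_i$ \emph{bad} (for this labeling) if any of these $O(n)$ events fails; a union bound over the at most $(\frac{1}{2}-\beta)n + 1 \le n$ choices of $k$ plus the single event for $M_i$ gives that $E_i$ is bad with probability at most $n \cdot e^{-c' n^{1/2}}$.

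Next I would bound the expected number of bad directions, for a fixed labeling, by $n \cdot n \cdot e^{-c' n^{1/2}} = n^2 e^{-c' n^{1/2}}$. The natural move is Markov's inequality: the probability that there are more than $n^{3/4}$ bad directions is at most $n^2 e^{-c' n^{1/2}} / n^{3/4} \le n^2 e^{-c' n^{1/2}}$, which is super-exponentially small in $n^{1/2}$. Finally, to handle \emph{all} labelings simultaneously, take a union bound over the $n!$ labelings of the vertex set. Since $n! \le n^n = e^{n \log n}$ and $e^{-c' n^{1/2}}$ wins against $e^{n \log n}$... it does not: $n \log n$ dominates $n^{1/2}$. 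So the naive union bound over labelings fails, and this is the main obstacle.

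To get around this, I would observe that the event ``$E_i$ is $(\beta, \varepsilon', p)$-good'' depends only on which edges of $K_n$ lie in each of the sets $E_i^k$ and $M_i$, and these sets are determined combinatorially by the \emph{relabeled} graph. A cleaner approach: instead of fixing the labeling and re-partitioning the edges, fix the \emph{partition} $E_0, \ldots, E_{n-1}$ of $E(K_n)$ once and for all (with the orders $\le_i$), and note that over the randomness of $G(n,p)$, each edge is present independently; a relabeling of $G(n,p)$ by a permutation $\pi$ is distributed the same as $G(n,p)$ itself. Thus for each \emph{fixed} labeling the probability that more than $n^{3/4}$ directions are bad is $q_n := n^2 e^{-c' n^{1/2}}$, and we want a statement uniform over labelings. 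The resolution used in such arguments is to make the per-direction failure probability small enough to beat $n!$: here we can afford a much stronger Chernoff bound. In fact, rather than allowing each of the roughly $n$ choices of $k$ to contribute, note that the intervals $E_i^k$ overlap heavily, so it suffices to control $|E(G) \cap E_i^k|$ on a net of $O(1/\varepsilon')$ well-spaced values of $k$ together with monotonicity, reducing the count of events per direction to $O_{\varepsilon'}(1)$; this changes nothing essential. The genuine fix is that $e^{-\Omega_{\varepsilon'}(\beta n p)}$ with $p \gg n^{-1/2}$ is actually $e^{-\omega(n^{1/2})}$, but to beat $n! = e^{\Theta(n\log n)}$ we instead count differently: rather than union-bounding over \emph{all} labelings, we union-bound over the at most $n$ directions and, for a fixed direction $E_i$ and fixed $k$, over all labelings the set $E(G) \cap \pi(E_i^k)$ ranges over — but for fixed $\pi$ it is still $\Bi(\beta n, p)$, and the number of \emph{distinct} sets $\pi(E_i^k)$ of size $\beta n$ is at most $\binom{\binom{n}{2}}{\beta n} \le n^{2\beta n} = e^{O(\beta n \log n)}$. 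Choosing $\beta$ small (which we are free to do, shrinking it if necessary) makes $O(\beta n \log n)$ smaller than the Chernoff exponent only if $p \gg n^{-1/2}\log n$; since the hypothesis is only $p \ge Cn^{-1/2}$, we must instead exploit that the \emph{collection} of relevant sets, ranging over $i$, $k$, and $\pi$, is exactly the collection of all ``intervals of parallel chords,'' whose total number is polynomial in $n$ — there are $n$ directions, $n/2$ orientations of the circle... concretely, the number of ordered labelings that produce distinct partitions is governed by the dihedral symmetry, so the number of genuinely distinct families $\{E_i^k\}$ is only $O(n^{O(1)})$. Proving this polynomial count is the crux: once it is established, a union bound over $\mathrm{poly}(n)$ events each of probability $\le e^{-\Omega(n^{1/2})}$ finishes the lemma.

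So, in summary: Step 1, for one direction and one labeling, apply Chernoff to the binomials $|E(G)\cap E_i^k|$ and $|E(G)\cap M_i|$; Step 2, union-bound over the $O(n)$ values of $k$ to bound the probability a fixed direction is bad by $e^{-\Omega_{\varepsilon'}(np)}$; Step 3, observe that for a fixed labeling, bad directions are independent-ish enough that Markov gives $\Pr[\#\text{bad} > n^{3/4}] \le n^{2}e^{-\Omega(np)}/n^{3/4}$; Step 4 --- the main obstacle --- handle all $n!$ labelings, which I would do not by a crude union bound but by noting that the relevant edge-subsets $E_i^k$ and $M_i$, as $(i,k,\text{labeling})$ vary, form a family of size only polynomial in $n$ (each such set is an ``interval of parallel chords'' on $n$ points on a circle, and there are $O(n^3)$ of these), so a union bound over $\mathrm{poly}(n)$ events of probability $e^{-\Omega(n^{1/2})}$ each is $o(1)$, and within each labeling at most $n^{3/4}$ directions fail because the total number of (direction, $k$) failures is a.a.s.\ at most, say, $n^{1/2}\log n \ll n^{3/4} \cdot 1$, hence at most $n^{3/4}$ directions are bad. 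I expect Step 4, specifically the bookkeeping that lets one replace the factorial union bound by a polynomial one, to be the only real content; Steps 1--3 are routine Chernoff-and-Markov.
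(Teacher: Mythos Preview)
Your Steps 1 and 2 are fine and match the paper. The real problem is in Steps 3 and 4.

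In Step 3 you use Markov's inequality, which only gives $\Pr[\#\text{bad} > n^{3/4}] \le n^{5/4} e^{-\Omega(np)} = e^{-\Omega(n^{1/2})}$ for a fixed labeling. As you yourself notice, this does not survive a union bound over $n!$ labelings. In Step 4 you try to rescue this by claiming that, as $(i,k,\text{labeling})$ varies, the family of sets $E_i^k$ and $M_i$ has only polynomially many members. This claim is false. The ``circle'' on which the chords are parallel is the Hamilton cycle $0,1,\ldots,n-1,0$ determined by the labeling; different labelings give different Hamilton cycles of $K_n$, and there are $(n-1)!/2$ of those. Consequently the collection of sets $\pi(E_i^k)$, as $\pi$ ranges over all permutations, contains essentially all matchings of size $\beta n$ in $K_n$, and its cardinality is super-polynomial. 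So the polynomial-union-bound route does not work.

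The fix the paper uses is the one you brushed past with the phrase ``independent-ish'': for a \emph{fixed} labeling, the sets $E_0,\ldots,E_{n-1}$ are pairwise disjoint subsets of $E(K_n)$, so the events ``$E_i$ is not good'' are genuinely mutually independent. Hence
\[
\Pr\big[\#\text{bad directions} > n^{3/4}\big] \le \binom{n}{n^{3/4}}\big(e^{-cnp/2}\big)^{n^{3/4}} \le 2^n \cdot e^{-\Omega(n^{3/4}\cdot np)} = e^{-\Omega(n^{5/4})},
\]
using $np \ge Cn^{1/2}$. This bound \emph{does} beat $n!\cdot n = e^{O(n\log n)}$, so the crude union bound over all labelings now goes through. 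Replace your Markov step by this independence argument and the lemma follows immediately.
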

\begin{proof}
  Let $G$ be a graph drawn from $G(n,p)$ and fix a labeling of the set of vertices of $G$ with $\modulon$. By Chernoff's inequality (Theorem~\ref{thm_Chernoff}) , for every $i$ and $k$,
  \[
  P\left(\big||E(G) \cap E_i^k| - \beta n p\big| > \varepsilon' \beta n p\right) \leq e^{-cnp}
  \]
  for some positive constant $c = c(\beta,\varepsilon')$, and similarly (note that $\frac{1}{2} - 2\beta \geq \beta$)
  \[
  P\left(\left||E(G) \cap M_i| - \left(\frac{1}{2}-2\beta\right) n p\right| > \varepsilon' \left(\frac{1}{2}-2\beta\right) n p\right) \leq e^{-cnp}.
  \]
  Hence, the probability that a particular $E_i$ is not good in $G$ is at most $e^{-cnp/2}$ by the union bound. Since the sets $E_i$ are pairwise disjoint, the events `$E_i$ is not good' are mutually independent. Therefore, the probability that there are more than $n^{3/4}$ bad directions is at most
  \[
  {n \choose n^{3/4}}\left(e^{-cnp/2}\right)^{n^{3/4}} \leq 2^n \cdot e^{-cn^{7/4}p/2} = e^{-c' n^{5/4}},
  \]
for some constant $c'$ depending on $c$ and $C$. 
Finally, since there are only $n!$ different labellings, the probability of there
being a labeling with more than $n^{3/4}$ bad directions is $o(1)$.
\end{proof}

Recall that, given a labeling of the vertex set of $K_n$ with $\modulon$, we say that two edges $e_1, e_2 \in E(K_n) \setminus E(C_n)$ cross if their endpoints lie alternately on the cycle $C_n$. Since in the proof of Theorem~\ref{thm_mainthmpart2}, we are planning to use an averaging argument to show the existence of a suitable pair of crossing edges, it would be very convenient if for every $i$ and $l$, every edge from $E_i$ crossed the same number of edges from $E_{i+l}$. Unfortunately, this is not true. To overcome this obstacle, we introduce the notion of close crossings, for which such uniformity statement holds approximately. Similarly as before, for each $k \in \modulon$, denote its distance from $0$ on the cycle $C_n$ by $\|k\|$; more precisely, let
\[
\| k \| = \min \left\{ x \geq 0 \colon x \equiv k \mod n \text{ or } x \equiv -k \mod n \right\}.
\]
Given a $\beta \in (0,\frac{1}{6})$, a crossing between edges $\{x_1,y_1\}$ and $\{x_2,y_2\}$ will be called {\em close} if the smallest of the distances between their endpoints is at most $\beta n$, i.e., $\min\{\|x_1-x_2\|, \|x_1-y_2\|, \|y_1-x_2\|, \|y_1-y_2\|\} \leq \beta n$. The following simple observations will be crucial in the proof of Theorem~\ref{thm_mainthmpart2}.

\begin{LEMMA}
  \label{lemma_closecrossings}
  For every $i \in \modulon$, $\beta \in (0,\frac{1}{6})$, and $l \in \{2\beta n + 1, \ldots, (1-2\beta)n-1\}$, the following holds.
  \begin{enumerate}
    \setlength{\itemsep}{1pt}
    \setlength{\parskip}{0pt}
    \setlength{\parsep}{0pt}
  \item[(i)]
    Every edge in $E_i$ forms close crossings with at most $2\beta n$ edges from $E_{i+l}$, and these edges can be covered by a set of the form $E_{i+l}^{k_1} \cup E_{i+l}^{k_2}$ for some $k_1$ and $k_2$ with $1 \leq k_1, k_2 \leq (\frac{1}{2}-\beta)n$.
  \item[(ii)]
    At least $(\frac{1}{2} - 2\beta)n$ edges in $E_i$ form close crossings with exactly $2\beta n$ edges from $E_{i+l}$, and these $2\beta n$ edges constitute a set of the form $E_{i+l}^{k_1} \cup E_{i+l}^{k_2}$ for some $k_1$ and $k_2$ with $1 \leq k_1, k_2 \leq (\frac{1}{2}-\beta)n$.
  \item[(iii)]
    The $(\frac{1}{2} - 2\beta)n$ edges from (ii) cover the interval $M_i \subset E_i$.
  \end{enumerate}
\end{LEMMA}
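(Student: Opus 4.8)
The plan is to unwind the combinatorial geometry of the two ``parallel chord'' families $E_i$ and $E_{i+l}$. Drawing $0,\dots,n-1$ on the circle $C_n$, two edges of $E(K_n)\setminus E(C_n)$ cross exactly when, regarded as chords, they meet inside the disk, equivalently when their four endpoints alternate around $C_n$. It is convenient to parametrize each direction: for $j\in\modulon$, the $k$-th smallest edge of $E_j$ in the order $\le_j$ has endpoints $\lceil j/2\rceil+k$ and $\lfloor j/2\rfloor-k$, so its two endpoints sit at circular ``depth'' $k$ on either side of the pole $\approx j/2$, and increasing $k$ is exactly the order $\le_j$. (I will suppress the harmless $\pm1$ coming from the parities of $j$ and $n$; the $C_n$-edge of direction $j$, if any, is the $k=0$ edge and plays no role.)

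Fix $e_1\in E_i$, say its $s$-th edge, with endpoints $u,v$, so the arc of $C_n$ joining $u$ to $v$ through the pole of direction $i$ has length $2s$. The first observation is that every $e_2\in E_{i+l}$ forming a \emph{close} crossing with $e_1$ has an endpoint within distance $\beta n$ of $u$ or of $v$, so it suffices to describe, for $w\in\{u,v\}$, the set $N_w$ of edges of $E_{i+l}$ that cross $e_1$ and have an endpoint in the arc $[w-\beta n,\,w+\beta n]$. If $e_2=\{c,d\}\in E_{i+l}$ and $c\in[w-\beta n,w+\beta n]$, then $d\equiv(i+l)-c$ lies within $\beta n$ of $(i+l)-w$, which equals $v+l$ for $w=u$ and $u+l$ for $w=v$; since $2\beta n<l<(1-2\beta)n$, this ``far'' endpoint is at distance more than $\beta n$ from both $u$ and $v$ (the only exception being $s\approx l/2$, handled below), hence lies strictly inside one fixed arc of $e_1$ — which one depending only on whether $l\gtrless 2s$. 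Therefore $e_2$ crosses $e_1$ iff $c$ lies on the complementary side of $w$, i.e.\ in the appropriate half of the window $[w-\beta n,w+\beta n]$. Passing back through the parametrization of $E_{i+l}$ (whose pole is $\approx i/2+l/2$), the edges with an endpoint in $[w-\beta n,w+\beta n]$ form a $\le_{i+l}$-interval of at most $2\beta n$ edges, and the crossing ones among them form a \emph{contiguous} sub-interval whose length a short computation pins at exactly $\beta n$ (the half-window $[w-\beta n,w+\beta n]\setminus\{w\}$ splitting into two blocks of $\beta n$ positions) whenever $e_1$ is not among the $\beta n$ outermost edges of $E_i$. The remaining possibilities — the far endpoint $v+l$ (or $u+l$) being itself within $\beta n$ of $u$ or $v$, or the window $[w-\beta n,w+\beta n]$ getting truncated in the $E_{i+l}$-parametrization because $w$ is near a pole of direction $i+l$ — are finitely many, and in each of them the window shrinks to $\le\beta n$ edges but \emph{every} edge in it crosses $e_1$, so again $|N_w|=\beta n$.

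For (ii) and (iii), let $e_1$ range over the $(\tfrac12-2\beta)n$ middle edges of $E_i$, i.e.\ $e_1\in M_i$; then $\beta n<s<|E_i|-\beta n$, so $\|u-v\|>2\beta n$ and the windows $[u-\beta n,u+\beta n]$ and $[v-\beta n,v+\beta n]$ are disjoint. Because $l>2\beta n$, the two $\le_{i+l}$-intervals containing $N_u$ and $N_v$ are disjoint as well, so $N_u$ and $N_v$ are disjoint $\le_{i+l}$-intervals of exactly $\beta n$ crossing edges each; hence the set of close-crossing partners of $e_1$, which equals $N_u\cup N_v$, is a disjoint union $E_{i+l}^{k_1}\cup E_{i+l}^{k_2}$ of size exactly $2\beta n$. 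This is (ii) with the promised set taken to be all of $M_i$, and since that set is then precisely $M_i$, it trivially covers $M_i$, which is (iii). Finally, (i) in the complementary range — $e_1$ among the $\beta n$ outermost edges of $E_i$ on either end — is easier: the shorter arc of $e_1$ then has at most $2\beta n$ vertices, every edge of $E_{i+l}$ crossing $e_1$ has an endpoint among them, and this constrains such an edge's rank in $E_{i+l}$ to a window of width $\le2\beta n$ around the $\approx(l/2)$-th edge; thus $e_1$ has at most $2\beta n$ crossing partners in $E_{i+l}$ altogether (a fortiori at most $2\beta n$ close ones) and they lie in a set $E_{i+l}^{k_1}\cup E_{i+l}^{k_2}$.

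I expect the work to concentrate in the second paragraph: the relative positions of $s$, of the shift $l/2$ between the poles of $E_i$ and $E_{i+l}$, and of the pole truncations produce a handful of sub-cases, and in each one must check both that the crossing edges form a contiguous $\le_{i+l}$-interval and that it has length \emph{exactly} $\beta n$ rather than $\beta n\pm O(1)$. Tracking the parity-induced $\pm1$ shifts in the definitions of $E_j$ and $\le_j$ alongside this is the only genuine friction, since conceptually everything is dictated by the picture of two transverse families of parallel chords.
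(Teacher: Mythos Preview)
Your plan is sound and would eventually yield a proof, but the paper's argument is considerably shorter because it organizes the same geometry around one observation you do not use: for a fixed $e_1=\{x_1,y_1\}\in E_i$, the set $C(e_1)$ of \emph{all} edges of $E_{i+l}$ that cross $e_1$ is already a $\le_{i+l}$-interval, of length $\min\{\|x_1-y_1\|,\|l\|,n-\|l\|\}-1$. The close crossings are then exactly the leftmost and rightmost $\beta n$ elements of this interval, which gives (i) at once; and since the hypothesis on $l$ forces $\|l\|-1,\ n-\|l\|-1\ge 2\beta n$, the interval has length $\ge 2\beta n$ if and only if $\|x_1-y_1\|>2\beta n$, i.e.\ $e_1\in M_i$, which is (ii) and (iii) in one line. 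This single observation absorbs your endpoint-window analysis together with all of its sub-cases (pole truncation, the $s\approx l/2$ coincidence, parity shifts), so what you flag as ``where the work concentrates'' simply disappears.

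There is also a genuine slip in your treatment of the pole case. When $w$ lies within $\beta n$ of a pole $P$ of $E_{i+l}$, the edges of $E_{i+l}$ with an endpoint in $[w-\beta n,w+\beta n]$ are those of rank $1,\dots,\beta n+\|w-P\|$, which is between $\beta n$ and $2\beta n$ edges, not ``$\le\beta n$'' as you write; and not all of them cross $e_1$ --- only those with rank exceeding $\|w-P\|$ do. The count $|N_w|=\beta n$ is still correct, but for the reason just given, not the one you state. (Also, ``finitely many'' is misleading: this sub-case arises for roughly $2\beta n$ choices of $e_1\in M_i$, not $O(1)$.) None of this is fatal, but it is exactly the friction the paper's interval description of $C(e_1)$ avoids.
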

\begin{proof}
  Fix an edge $e_1 \in E_i$ and assume that $e_1 = \{x_1, y_1\}$. Note that the set $C(e_1)$ of edges in $E_{i+l}$ that cross $e_1$ is an interval in the ordering $\leq_{i+l}$ of $E_{i+l}$. The edges in $E_{i+l}$ that form close crossings with $e_1$ come precisely from the leftmost and the rightmost subintervals of $C(e_1)$ of lengths $\beta n$.
Hence $(i)$ follows. Moreover, the length of $C(e_1)$ is $\min\{\|x_1-y_1\|, \|l\|, n - \|l\| \} - 1$. Note that $\|x_1-y_1\| \leq 2\beta n$ precisely for those $2\beta n$ edges $\{x_1,y_1\} \in E_i$ which come from the leftmost and the rightmost subintervals of $E_i$ of lengths $\beta n$, i.e., $E_i \setminus M_i$. Also $\|l\| - 1,  n- \|l\| - 1 \geq 2 \beta n$ by our assumption on $l$, so $e_1$ forms a close crossing with less than $2\beta n$ edges if and only if $\|x_1-y_1\| \leq 2\beta n$. This proves $(ii)$ and $(iii)$.
\end{proof}

With the above two lemmas at hand, we are ready to show the existence of medium length cycles.

\begin{proof}[Proof of Theorem \ref{thm_mainthmpart2}]
  Let $\beta = \min\{\frac{\delta}{4}, \frac{\varepsilon}{10}\}$ and $\varepsilon' = \frac{\varepsilon}{10}$. Let $G$ be a graph drawn from $G(n,p)$. By Lemma~\ref{lemma_gooddirections}, a.a.s.~every labeling of the vertices of $G$ with $\modulon$ yields at most $\varepsilon' n$ directions that are not $(\beta, \varepsilon', p)$-good in $G$. Moreover, by Chernoff's inequality, a.a.s.~$e(G) \leq (1+\frac{\varepsilon}{2})\frac{n^2p}{2}$. Fix a $t \in [\delta n, (1-\delta)n]$. We will show that, conditioned on the above two events (which hold a.a.s.), every Hamiltonian subgraph $G'$ of $G$ with more than $(\frac{1}{2}+\varepsilon)\frac{n^2p}{2}$ edges contains a cycle of length $t$.

  Fix such a subgraph $G'$ and a labeling of its vertices such that $C_n$ is a Hamilton cycle in $G'$, and let $l = t - 2$. It suffices to show that for some $i \in \modulon$, the graph $G'$ contains some $e_1 \in E_i$ and $e_2 \in E_{i+l}$ which form a close crossing, since we have already observed that the graph $C_n \cup \{e_1,e_2\}$ contains a cycle of length $l + 2$, and $l$ was chosen so that $l + 2 = t$. Let $I$ be the set of directions that are good in $G$ and recall that $|I| \geq (1-\varepsilon')n$. We start by estimating the number of close crossings between pairs of edges of $G$ which came from $E_i$ and $E_{i+l}$ such that both $i$ and $i+l$ are in $I$. Denote this quantity by $X$. For the remainder of the proof we will assume that $2l \neq n$; the case $2l = n$ can be resolved using an almost identical argument and we leave it to the reader. The number of pairs $\{i, i+l\} \subset I$ is at least $(1-2\varepsilon')n$ (here we use the assumption that $2l \neq n$ and hence $i + l \neq i-l$). Let us fix one of them. Since $l \in (3\beta n, (1-3\beta n))$ by our assumption on $t$, Lemma~\ref{lemma_closecrossings} proves that each of the at least $(1-\varepsilon')(\frac{1}{2}-2\beta)np$ edges in $M_i \cap E(G)$ forms a close crossing with every edge from some two disjoint sets $E_{i+l}^k$ of sizes $\beta n$ each. Since $i+l \in I$, the graph $G$ has at least $(1-\varepsilon')\beta n p$ edges in each such set $E_{i+l}^k$. It follows that
  \[
  X \geq (1-2\varepsilon')n \cdot (1 - \varepsilon')\left(\frac{1}{2} - 2\beta\right)np \cdot 2(1-\varepsilon')\beta n p \geq (1-4\varepsilon'-4\beta)\beta n^3p^2.
  \]
  On the other hand, since every edge $e_1 \in E_i$ forms close crossings with at most $2\beta n$ edges from $E_{i \pm l}$, and such edges are covered by sets $E_{i \pm l}^k$ (Lemma~\ref{lemma_closecrossings}), every edge in a good direction $i$ can form at most $(1+ \varepsilon') \cdot 4\beta n p$ close crossings with edges in a good direction $i \pm l$, i.e., crossings counted by $X$. Hence, the number $X'$ of crossings in $G$ that are counted by $X$ but are not contained in $G'$ satisfies
  \[
  X' \leq \big( e(G) - e(G') \big) \cdot (1+\varepsilon') \cdot 4\beta n p \leq (1+\varepsilon') \cdot \frac{1-\varepsilon}{2} \cdot \frac{n^2p}{2} \cdot 4\beta n p \leq (1 - \varepsilon + \varepsilon') \beta n^3 p^2.
  \]
  Since $5 \varepsilon' + 4\beta < \varepsilon$, it follows that $X > X'$ and hence $G'$ must contain a crossing pair of edges from some $E_i$ and $E_{i+l}$. Finally, recall that these two edges, together with the edges of the spanning cycle in $G'$, form a graph containing a cycle of length $t$.
\end{proof}

%%
%%
%%
%%  Concluding Remarks
%%
%%
%%
%%

\section{Concluding Remarks}
\label{section_concludingremarks}

\subsection{Necessity of Hamiltonicity}

In this paper, we proved that if $p \gg n^{-1/2}$, then $G(n,p)$
a.a.s.~satisfies the following property: Every Hamiltonian subgraph
$G' \subset G(n,p)$ with more than $(\frac{1}{2} + o(1)){n \choose 2}p$ edges
is pancyclic. Our proof heavily relied on the fact that $G'$ contains
a Hamilton cycle, and as discussed in the introduction, it is important
to assume the Hamiltonicity. However, if we are only interested in finding short
cycles, cycles of length $3$ up to $\delta(\varepsilon) n$, then it is no longer
necessary to assume the Hamiltonicity of $G'$. The proof of this fact will be given in the appendix.

\subsection{4-pancyclicity}

The general idea of our proof of Theorem \ref{thm_mainthm}
was to find a particular configuration of edges
which form a cycle of desired length together with the given Hamilton cycle.
Let $k$-crossing be a configuration consisting of $k$ edges depicted in Figure~\ref{fig:k-crossing}.
When finding medium length cycles, we only considered 2-crossings (which we called crossing pairs),
while when finding short and long cycles, we considered 4-crossings (which we called shortcuts).
One of the reasons that the proof consists of two separate parts
is because it is impossible to use only 2-crossings to find long cycles.
For example, after fixing a Hamilton cycle, there are only about $n^2$ 2-crossings in the
complete graph $K_n$ that form a cycle of length $n-1$ together with the fixed Hamilton cycle;
only about $n^2p^2$ of them will appear in $G(n,p)$. Since
$n^2p^2 \ll n^2p$, we can easily remove all the 2-crossings which
form a cycle of length $n-1$ together with the given Hamilton cycle.
This issue has been resolved by considering 4-crossings.

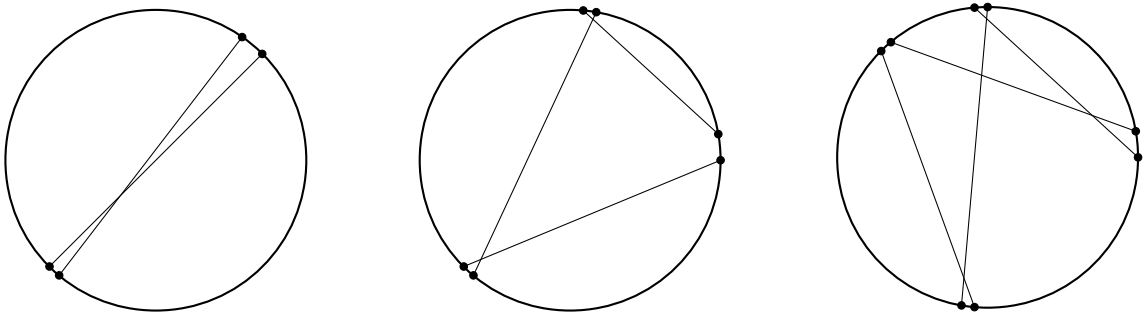
\begin{figure}[h!]
  \centering
  \begin{tabular}{ccccc}
    %%
%% 2-crossing
%%

\begin{tikzpicture}
%  \clip (-2.8,-3.0) rectangle (3.5,3.2);

  \draw [thick] (0,0) circle (20mm);

  \draw [fill=black] (45:20mm) circle (0.5mm);

  \draw [fill=black] (55:20mm) circle (0.5mm);

  \draw [fill=black] (225:20mm) circle (0.5mm);

  \draw [fill=black] (230:20mm) circle (0.5mm);

  \draw (45:20mm) -- (225:20mm);
    
  \draw (55:20mm) -- (230:20mm);

\end{tikzpicture} & \hspace{0.2in} & %%
%% 3-crossing
%%

\begin{tikzpicture}
  %\clip (-2.8,-3.0) rectangle (3.5,3.2);

  \draw [thick] (0,0) circle (20mm);

  \draw [fill=black] (0:20mm) circle (0.5mm);

  \draw [fill=black] (10:20mm) circle (0.5mm);

  \draw [fill=black] (80:20mm) circle (0.5mm);
  
  \draw [fill=black] (85:20mm) circle (0.5mm);

  \draw [fill=black] (225:20mm) circle (0.5mm);

  \draw [fill=black] (230:20mm) circle (0.5mm);

  \draw (0:20mm) -- (225:20mm);
    
  \draw (85:20mm) -- (10:20mm);
  
  \draw (230:20mm) -- (80:20mm);

\end{tikzpicture} & \hspace{0.2in} & %%
%% 4-crossing
%%

\begin{tikzpicture}
  %\clip (-2.8,-3.0) rectangle (3.5,3.2);

  \draw [thick] (0,0) circle (20mm);

  \draw [fill=black] (0:20mm) circle (0.5mm);

  \draw [fill=black] (10:20mm) circle (0.5mm);

  \draw [fill=black] (90:20mm) circle (0.5mm);

  \draw [fill=black] (95:20mm) circle (0.5mm);

  \draw [fill=black] (130:20mm) circle (0.5mm);

  \draw [fill=black] (135:20mm) circle (0.5mm);

  \draw [fill=black] (260:20mm) circle (0.5mm);

  \draw [fill=black] (265:20mm) circle (0.5mm);

  \draw (130:20mm) -- (10:20mm);
    
  \draw (135:20mm) -- (265:20mm);

  \draw (95:20mm) -- (0:20mm);
  
  \draw (90:20mm) -- (260:20mm);

\end{tikzpicture}
  \end{tabular}
  \caption{$2,3$ and $4$-crossings.}
  \label{fig:k-crossing}
\end{figure}

Following Cooper~\cite{Cooper1}, we call a graph {\em $k$-pancyclic} if
there is some Hamilton cycle in the graph such that for all $t \in \{3, \ldots, n\}$,
we can find a cycle of length $t$ using only the edges of this fixed Hamilton cycle
and at most $k$ other edges. Using this terminology, Theorems~\ref{thm_mainthmpart3}
and~\ref{thm_mainthmpart2} say that a.a.s.~every subgraph $G'$ as above
is `almost' $4$-pancyclic (recall that the existence of cycles of length 3 to 7
relied on Theorems~\ref{thm_resilientsmallcycle} and \ref{thm_bondysimonovits}). 
In fact, also for $t$ with $3 \le t \le 7$, we can show that there exists a cycle
of length $t$ which uses only the edges of the given Hamilton cycle and at most 4 other edges.
This statement is vacuously true for cycles of lengths $3$ and $4$. For cycles of lengths $5$, $6$,
and $7$, it can be shown using the methods from the proof of Theorem~\ref{thm_mainthmpart3},
by considering a slightly modified definition of a shortcut, where vertices in some
of the pairs $\{i_j, i_j+1\}$ are merged. Thus, a minor modification of our argument
establishes the fact that a.a.s.~every subgraph $G'$ as above is $4$-pancyclic.
Moreover, we believe that it is true that a.a.s.~each such subgraph $G'$ is $3$-pancyclic.
In order to show this, it would be sufficient to prove a statement corresponding to
Lemma~\ref{lemma_saturation} for $3$-crossings (see Figure~\ref{fig:k-crossing}).
However, at this point we do not know how to prove such a statement.

\medskip

\noindent {\bf Acknowledgement. } This work was done during the IPAM Long Program, Combinatorics: Methods and Applications in Mathematics and Computer Science. We would like to greatly thank the members of staff at IPAM, the organizers of this program, and all its participants for creating a fantastic research environment. We would also like to thank our advisors J{\'o}zsef Balogh and Benny Sudakov, and the anonymous referee. 

%%
%%
%%  Short Cycles
%%
%%
\appendix

\section{Short Cycles without Hamiltonicity}

As mentioned in the concluding remark, when finding short cycles, i.e.,
cycles of length $3$ to $\delta(\varepsilon) n$, it is not necessary to
have the additional condition that the subgraph is Hamiltonian.
In the appendix, we prove the following theorem which establishes
this fact.

\begin{THM} \label{thm_mainthmpart1}
For all positive $\varepsilon$, there exists a constant $C$ such that if $p \geq Cn^{-1/2}$, then $G(n,p)$
a.a.s.~satisfies the following. Every subgraph
$G' \subset G(n,p)$ with more than $(\frac{1}{2}+\varepsilon)\frac{n^2p}{2}$ edges contains
a cycle of length $t$ for all $3 \leq t \leq \frac{\varepsilon}{25600}n$.
\end{THM}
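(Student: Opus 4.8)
The plan is to reduce to the pancyclicity results already established (Theorem~\ref{thm_mainthm2}) together with Sudakov--Vu's resilient Hamiltonicity theorem \cite{SuVu}, by extracting inside $G'$ a \emph{dense Hamiltonian core}: a linear-sized induced subgraph whose minimum degree is above $\tfrac12$ times its expected degree. The reason Hamiltonicity of $G'$ itself can be dropped here is that such a core is forced by the edge count alone, and once the core has minimum degree a bit above $\tfrac12$ its density, Hamiltonicity of the core comes for free from Sudakov--Vu. One necessarily loses something: the core will have only $\Theta_\varepsilon(n)$ vertices, in fact of order $\sqrt\varepsilon\,n$, which is exactly why one can only reach cycles of length $\tfrac{\varepsilon}{25600}n$ rather than $\Theta(\varepsilon)n$.

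To build the core, set $\varepsilon_1=\varepsilon/4$ and run the obvious cleaning process: let $W_0=V(G')$, and while the current set $W_j$ contains a vertex $v$ with $\deg_{G'[W_j]}(v)\le\bigl(\tfrac12+\varepsilon_1\bigr)|W_j|\,p$, delete one such $v$ and set $W_{j+1}=W_j\setminus\{v\}$; let $W$ be the set at which the process halts. By construction $\delta(G'[W])>\bigl(\tfrac12+\varepsilon_1\bigr)|W|\,p$. Each deletion destroys at most $\bigl(\tfrac12+\varepsilon_1\bigr)|W_j|\,p$ edges of $G'$, and $\sum_j|W_j|=\sum_j(n-j)\le\tfrac{n^2+n}{2}$, so telescoping gives $e(G')-e(G'[W])\le\bigl(\tfrac12+\varepsilon_1\bigr)p\cdot\tfrac{n^2+n}{2}$; combined with $e(G')>\bigl(\tfrac12+\varepsilon\bigr)\tfrac{n^2p}{2}$ this yields $e(G'[W])>(\varepsilon-\varepsilon_1)\tfrac{n^2p}{2}-O(np)>\tfrac{\varepsilon}{2}\cdot\tfrac{n^2p}{2}$ for $n$ large. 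On the other hand, a Chernoff bound together with a union bound over all vertex subsets shows that a.a.s.\ no set $S$ with $|S|\le\mu n$ induces more than $\tfrac{\varepsilon}{2}\cdot\tfrac{n^2p}{2}$ edges of $G(n,p)$, provided $\mu$ is a sufficiently small constant multiple of $\sqrt\varepsilon$ (so that the expected number of induced edges in such an $S$ is a small fraction of the threshold); the union bound converges because the per-set failure probability is $e^{-\Omega(\varepsilon n^2p)}$, which is $e^{-\omega(n)}$ since $n^2p\gg n$. Since $e(G'[W])\le e(G(n,p)[W])$, we conclude $|W|>\mu n$.

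Write $m=|W|>\mu n$. Once $G(n,p)$ and $G'$ are fixed, $W$ is a fixed set, so $G(n,p)[W]$ is distributed as $G(m,p)$; moreover $p\ge Cn^{-1/2}$ implies $p\ge C'm^{-1/2}$ with $C'$ as large as we wish, by taking $C=C(\varepsilon)$ large. Since $\delta(G'[W])>(\tfrac12+\varepsilon_1)mp$, Sudakov--Vu's theorem (applied inside $G(m,p)$) shows $G'[W]$ is Hamiltonian; since also $e(G'[W])\ge\tfrac12 m\cdot(\tfrac12+\varepsilon_1)mp=(\tfrac12+\varepsilon_1)\tfrac{m^2p}{2}$, Theorem~\ref{thm_mainthm2} (applied inside $G(m,p)$ with parameter $\varepsilon_1/2$) shows $G'[W]$ is pancyclic. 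Hence $G'$ contains a cycle of every length $t$ with $3\le t\le m$, and as $m>\mu n\ge\tfrac{\varepsilon}{25600}n$ the theorem follows. (Alternatively, one may skip Theorem~\ref{thm_mainthm2} and Sudakov--Vu and invoke directly inside $G(m,p)$ the local resilience result of Krivelevich, Lee, and Sudakov \cite{KrLeSu}.)

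The part calling for the most care is the interaction between the data-dependence of $W$ and the probabilistic inputs: since $W$ depends on $G'$ --- hence on $G(n,p)$ --- each statement used inside $G(m,p)$ must hold simultaneously for all $\le 2^n$ possible $W$'s, which is fine because those statements are proved either by transference (Theorems~\ref{thm_resilientsmallcycle}--\ref{thm_mainthm2}) or by Sudakov--Vu's method, both of which fail with probability $e^{-\omega(n)}$, so a union bound over vertex subsets still converges (equivalently, all the randomness they use is captured by pseudorandomness properties of $G(n,p)$ that hold for every such $W$ at once). Relatedly, in the cleaning step one must use the \emph{absolute} deletion threshold $(\tfrac12+\varepsilon_1)|W_j|\,p$ and bound destroyed edges by $\sum_j|W_j|\le\tfrac{n^2+n}{2}$ rather than edge-by-edge; this sidesteps the \emph{false} assertion that every linear-sized induced subgraph of $G(n,p)$ has minimum degree $(1-o(1))|S|\,p$ (it can be as small as $0$, when $S$ consists of a vertex together with non-neighbours of it), replacing it by the robust fact that small vertex sets of $G(n,p)$ are not too dense, which is all that the lower bound on $|W|$ requires.
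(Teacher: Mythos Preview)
Your approach is genuinely different from the paper's and, if completed, would even give cycles up to length $\Theta(\sqrt{\varepsilon})\,n$ rather than $\Theta(\varepsilon)\,n$. The paper does not build a Hamiltonian core at all; instead it locates a single vertex $w$ whose second neighbourhood in $G'$ induces at least $\frac{\varepsilon}{16}n^2p$ edges (this is the content of Lemma~\ref{lemma_findingspecialvertex}), passes to a subset $Z\subset N^{(2)}_{G'}(w)$ of minimum degree $\Theta(\varepsilon)np$, uses the expansion property of large-minimum-degree subgraphs of $G(n,p)$ together with P\'osa's rotation lemma to find a long path inside $Z$ from a chosen vertex $w_2$, and then closes each prefix of that path into a cycle through $w$ via two length-two connections back to $w$. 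This is self-contained: the only probabilistic inputs are expansion and codegree properties of $G(n,p)$ that are proved directly, and neither Theorem~\ref{thm_mainthm2} nor Sudakov--Vu is invoked.

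Your argument has a real gap precisely at the sentence ``$G(n,p)[W]$ is distributed as $G(m,p)$''. As you yourself note two paragraphs later, $W$ is determined by $G'$ and hence by $G(n,p)$, so this is simply false; what you need is that the conclusions of Sudakov--Vu and of Theorem~\ref{thm_mainthm2} (or of~\cite{KrLeSu}) hold for $G(n,p)[W]$ simultaneously for \emph{every} linear-sized $W$. Your proposed fix --- a union bound over $\le 2^n$ subsets --- requires each of these black-box theorems to fail on $G(m,p)$ with probability $e^{-\omega(n)}$. For the transference-based parts of Theorem~\ref{thm_mainthm2} this can indeed be read off the proofs (Lemma~\ref{lemma_Pd} fails with probability $e^{-\Omega(m^{3/2})}$ and Lemma~\ref{lemma_gooddirections} with probability $e^{-\Omega(m^{5/4})}$). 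But for Sudakov--Vu you merely assert the bound; their proof uses several pseudorandomness properties of $G(m,p)$, and some natural candidates (e.g.\ per-vertex degree concentration) only hold with probability $1-e^{-\Theta(mp)}=1-e^{-\Theta(\sqrt n)}$, which does not beat $2^n$. The honest route is the one you sketch parenthetically: list the pseudorandomness properties actually used in~\cite{SuVu} or~\cite{KrLeSu}, and check that each is either an upper-bound property inherited from $G(n,p)$ or a lower-bound property on linear-sized sets holding in $G(n,p)$ with probability $1-e^{-\Omega(n^{3/2})}$, hence valid for every $W$ at once. That is doable but it is genuine work, and it means reopening those proofs rather than citing them --- which rather defeats the point of the reduction. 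The paper's direct argument sidesteps all of this at the cost of the special-vertex lemma.
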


The following well-known results from extremal graph theory will
be used in our proof.
%%
%% Finding minimum degree subgraph
%%
First theorem allows
one to find a large minimum degree subgraph in a graph with many edges (see, e.g., \cite[Proposition 1.2.2]{Diestel}).
\begin{LEMMA} \label{lemma_findingmindegree}
Let $G$ be a graph on $n$ vertices with at least $dn$ edges. Then $G$ contains a subgraph $G'$ with minimum degree at least $d$.
\end{LEMMA}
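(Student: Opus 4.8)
The plan is to use the classical greedy vertex-deletion argument. Starting from $G_0 = G$, I would iterate the following step: if the current graph $G_t$ contains a vertex $v$ with $\deg_{G_t}(v) < d$, then set $G_{t+1} = G_t - v$; otherwise, stop. Since each step deletes exactly one vertex, the process terminates after at most $n$ steps, producing some graph $G^{*} = G_{\tau}$. By the stopping rule, every vertex of $G^{*}$ has degree at least $d$ in $G^{*}$, so the only thing that needs to be checked is that $G^{*}$ is nonempty; then $G' := G^{*}$ is the desired subgraph. (If $d \le 0$ the statement is trivial, as $G' = G$ already has minimum degree $\ge 0 \ge d$; so assume $d > 0$.)

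The key step is a global edge count showing the process cannot exhaust all vertices. When we delete a vertex $v$ from $G_t$, we remove exactly $\deg_{G_t}(v)$ edges, and since $\deg_{G_t}(v)$ is a nonnegative integer strictly smaller than $d$, each deletion removes strictly fewer than $d$ edges. Hence after any $k$ deletions, the total number of edges removed is strictly less than $kd$. If the process ever reached the empty graph, it would have performed at most $n$ deletions and removed all $e(G)$ edges, giving $e(G) < nd$; but this contradicts the hypothesis $e(G) \ge dn$. Therefore the process halts while the vertex set is still nonempty, which completes the proof.

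There is essentially no deep obstacle here; the argument is entirely elementary. The only points that require a little care are the bookkeeping in the edge count — specifically, using that degrees are integers to turn "$\deg(v) < d$" into "strictly fewer than $d$ edges removed," so that the summed bound $e(G) < nd$ genuinely contradicts $e(G) \ge dn$ — and the trivial treatment of the $d \le 0$ case. Everything else is just observing that termination of the greedy process is equivalent to having reached a subgraph with minimum degree at least $d$.
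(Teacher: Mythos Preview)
Your argument is correct; this is exactly the standard greedy deletion proof (as in Diestel, Proposition~1.2.2). The paper itself does not supply a proof of this lemma but merely cites Diestel, so there is nothing further to compare.
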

%%
%% Posa rotation lemma
%%
Next lemma is P{\'o}sa's rotation-extension lemma (see, e.g., \cite[Ch. 10, Problem 20]{Lovasz}).
\begin{LEMMA} \label{lemma_posarotation}
Let $G$ be a graph such that $|N(X)\setminus X| \geq 2|X| -1$ for all $X \subset V(G)$
with $|X| \leq t$. Then for any vertex $v$ of $G$, there exists a path
of length $3t-2$ in $G$ that has $v$ as an end point.
\end{LEMMA}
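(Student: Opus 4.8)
The plan is to mimic the strategy behind Theorem~\ref{thm_mainthmpart3}: for each target length $t$ we produce a bounded‑size ``shortcut'' of edges of $G'$ which, together with a long backbone path living inside $G'$, closes up into a cycle of length exactly $t$. The one genuinely new difficulty, compared with the Hamiltonian case, is that the backbone cannot be taken to be a spanning cycle, so we must produce a long path inside $G'$ by hand. First, I would invoke Proposition~\ref{prop_probrestriction}: since ``$G(n,p)$ has global resilience at least $(\tfrac12-\tfrac\varepsilon2)\tfrac{n^2p}{2}$ with respect to the (monotone) property of containing a $t$‑cycle for every $3\le t\le \tfrac{\varepsilon}{25600}n$'' is exactly the statement we want (up to the a.a.s.\ estimate $e(G(n,p))=(1+o(1))\binom n2 p$), it suffices to prove the theorem for $p=Cn^{-1/2}$, which in particular lands in the range where Theorem~\ref{thm_transference} is available. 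Cycles of length $3$ up to some absolute constant $t_0$ are then handled directly on $G'$: odd lengths by Theorem~\ref{thm_resilientsmallcycle} (our hypothesis $p\ge Cn^{-1/2}$ is far stronger than the $n^{-1+1/(l-1)}$ required there) and even lengths by Theorem~\ref{thm_bondysimonovits} (indeed $e(G')\ge\tfrac\varepsilon2 n^2p\gg n^{1+1/k}$ for every fixed $k$). So it remains to find cycles of every length $t$ with $t_0\le t\le \tfrac{\varepsilon}{25600}n$.

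For the intermediate lengths I would first extract, via Lemma~\ref{lemma_findingmindegree}, a subgraph $G''\subseteq G'$ with $\delta(G'')\ge\tfrac\varepsilon4 np$; a short edge‑counting argument (comparing $e(G'')$ with the a.a.s.\ upper bound on $e_{G(n,p)}(Y)$ over all vertex sets $Y$) shows $|V(G'')|=\Omega(n)$. The same a.a.s.\ property of $G(n,p)$ — obtained from Theorem~\ref{thm_Chernoff} together with a union bound over all vertex subsets $Y$, using the multiplicative Chernoff tail for the large $Y$ and a cruder large‑deviation estimate for the small $Y$ — gives that every subgraph of $G(n,p)$ with minimum degree at least $\tfrac\varepsilon4 np$ satisfies $|N(X)\setminus X|\ge 2|X|-1$ for all $X$ with $|X|\le c\varepsilon n$. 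Feeding this into Lemma~\ref{lemma_posarotation} produces a path $P\subseteq G''\subseteq G'$ on $\Omega(\varepsilon n)$ vertices. I would then relabel so that the vertices of $P$ occupy positions $0,1,\dots,L$ on a line (the remaining vertices occupying $L+1,\dots,n-1$ in some order), define a \emph{path $l$‑shortcut} exactly as in Section~\ref{section_mainthm} but with ``consecutive on $C_n$'' replaced by ``consecutive on $P$'', and check that $P$ together with the four chords of such a shortcut contains a cycle of length $l+8$ using only positions from a bounded window together with the sub‑path of length $l+1$ between $v_{i_4}$ and $v_{i_4+l+1}$. The combinatorial input is the obvious path analogue of Lemma~\ref{lemma_saturation} — proved by the same degree‑sum averaging over consecutive pairs of path vertices — while the boundedness estimates of Lemma~\ref{lemma_Kpbounded} carry over verbatim. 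Applying Theorem~\ref{thm_transference} to the path‑shortcut hypergraphs, with an extra union bound over the $\le n!$ orderings of the vertex set and over all $l\le \delta n$ (which is negligible against the failure probability $e^{-\Omega(pn^2)}=e^{-\Omega(n^{3/2})}$), yields a path $l$‑shortcut of $G'$ along $P$ for every $l$, and hence a cycle of length $l+8$ for every $l$.

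The main obstacle, and the reason the admissible $\delta$ is so small, is that the backbone $P$ is not spanning, so the path analogue of Lemma~\ref{lemma_saturation} cannot be quoted for the whole vertex set: one must instead run the shortcut count, and the transference argument, on the subgraph $G'[V(P)]$, which forces one to guarantee that $V(P)$ still carries a $(\tfrac12+\Omega(\varepsilon))$‑fraction of the edges of $G(n,p)[V(P)]$, i.e.\ that $P$ misses only an $O(\varepsilon)$‑fraction of $G'$'s edge mass. Securing such a long path is the real content: the crude expansion bound above only yields a path of length $\Theta(\varepsilon n)$, so one needs a more careful, iterated rotation–extension argument inside the expanding graph $G''$ (or, in the complementary regime where the edges of $G'$ concentrate on a proper subset $S$, the observation that $G'[S]$ is then dense and Hamiltonian, so that Theorem~\ref{thm_mainthm2} — possibly via Sudakov and Vu's resilient Hamiltonicity — already makes it pancyclic on $|S|\gg \delta n$ vertices). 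Once $P$ is this long, the pieces from the previous paragraphs assemble a cycle of every length $t\in[t_0,\tfrac{\varepsilon}{25600}n]$, completing the proof.
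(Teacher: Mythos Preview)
Your proposal does not address the stated lemma at all. The statement in question is P\'osa's rotation--extension lemma: a purely deterministic assertion that any graph with the expansion property $|N(X)\setminus X|\ge 2|X|-1$ for all $|X|\le t$ contains, from any prescribed endpoint, a path of length $3t-2$. There is no random graph, no $G(n,p)$, no $G'$, no cycle length to target, and no edge probability $p$ in the hypotheses. What you have written is an outline for Theorem~\ref{thm_mainthmpart1} (short cycles in dense subgraphs of $G(n,p)$ without the Hamiltonicity assumption), which is a different statement entirely; indeed, in your own outline you \emph{invoke} Lemma~\ref{lemma_posarotation} rather than prove it.

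For the record, the paper does not supply its own proof of Lemma~\ref{lemma_posarotation} either: it is quoted as a standard result (see the reference to \cite[Ch.~10, Problem~20]{Lovasz}), with the remark that the claim about the prescribed endpoint is implicit in that proof. A correct proof proceeds by the classical rotation argument: take a longest path $P=v_0v_1\cdots v_m$ with $v_0=v$, observe that all neighbours of $v_m$ lie on $P$, and for each such neighbour $v_i$ perform a rotation to obtain a new longest path ending at $v_{i+1}$; iterating and collecting the set $S$ of endpoints so produced, one shows that $N(S)\subseteq S\cup S^-\cup S^+$ (predecessors and successors on $P$), whence $|N(S)\setminus S|\le 2|S|-1$, forcing $|S|>t$ by the expansion hypothesis and hence $m\ge |S\cup S^-\cup S^+|-1\ge 3t-2$. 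None of the random‑graph machinery you deploy is relevant here.
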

\begin{REM}
The original statement does not say anything about the end point of the path.
However, this fact is implicit in the proof of this lemma given in \cite{Lovasz}.
\end{REM}

The proof of Theorem \ref{thm_mainthmpart1} follows the line of
Krivelevich, Lee, and Sudakov \cite{KrLeSu},
and most of the lemmas we use are variants of lemmas
from \cite{KrLeSu}. We will uses these lemmas to gain control
of subgraphs of a random graph. Since the modifications are quite simple, we will omit proofs of these lemmas.
First lemma, which is a variant of \cite[Lemma 3.3]{KrLeSu}, controls the growth of the neighborhood of a set.
%%
%%  Neighborhood growth lemma
%%
\begin{LEMMA} \label{lemma_nbdgrowth}
For every $\varepsilon' \in (0,1)$, there exists a constant $C_0$ such that the following holds.
If $p = C n^{-1/2}$ for some constant $C \ge C_0$, and $r \in (0,1]$, then a.a.s.~every
subgraph $G' \subset G(n,p)$ satisfies the following property.
For all $X \subset V$ with $\abs{X} \ge \varepsilon' np$ and $\deg_{G'}(X) \geq \abs{X} rnp$,
we have $\abs{N_{G'}(X)} \geq (1 - \varepsilon')rn$.
\end{LEMMA}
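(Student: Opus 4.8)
The plan is to remove the quantifier over subgraphs $G'$ and reduce to an edge-distribution statement about $G(n,p)$ alone. The key observation is deterministic: if $G' \subseteq G(n,p)$ and $\deg_{G'}(X) \ge |X|rnp$, then every edge of $G'$ incident to $X$ has an endpoint in $W := N_{G'}(X)$, whence
\[
e_{G(n,p)}(X,W) \;\ge\; e_{G'}(X,W) \;=\; \deg_{G'}(X) \;\ge\; |X|rnp .
\]
So it suffices to show that a.a.s.\ $G(n,p)$ has the property that $e_{G(n,p)}(X,W) < |X|rnp$ for \emph{every} $X$ with $|X| \ge \varepsilon'np$ and \emph{every} $W$ with $|W| < (1-\varepsilon')rn$; from here $G'$ plays no role and only $G(n,p)$ is random.

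The point of this reformulation is that the target $|X|rnp$ is much larger than $\mathbb{E}\big[e_{G(n,p)}(X,W)\big] \le |X||W|p$: it beats it by the factor $rn/|W| > (1-\varepsilon')^{-1}$, which is $\Theta(1)$ only when $|W| = \Theta(n)$ and is polynomially large when $|W|$ is small. I would prove the statement with Chernoff's inequality (Theorem~\ref{thm_Chernoff}) and a union bound, in two ranges of $|W|$. When $|W|$ is large (say $|W| \ge n^{1/2}\log n$), the expectation is big enough that the one-sided bound $e_{G(n,p)}(X,W) \le (1+\varepsilon'/4)|X||W|p$ fails for a fixed pair with probability $\exp\!\big(-\Omega(\varepsilon'^2|X||W|p)\big)$, which — provided $C \ge C_0(\varepsilon')$ — beats the union-bound cost $\binom{n}{|X|}\binom{n}{|W|}$; and then $(1+\varepsilon'/4)|X||W|p < (1+\varepsilon'/4)(1-\varepsilon')|X|rnp < |X|rnp$. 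When $|W|$ is small, sharp concentration is unavailable but unnecessary: using the a.a.s.\ bound $\Delta(G(n,p)) \le 2np$ one has $e_{G(n,p)}(X,W) \le 2np|W| < |X|rnp$ whenever $|W| < \frac{1}{2}r|X|$, and the narrow leftover window is handled by the same ``crude bound plus union bound'' idea applied to the Poisson-type variables $\deg_{G(n,p)}(w,X)$, where allowing $e_{G(n,p)}(X,W)$ to be a $\mathrm{polylog}(n)$ multiple of its mean already keeps it far below $|X|rnp$ while making the tail probability small enough to absorb the union bound.

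The step I expect to be the real obstacle is the uniform concentration in the first range, precisely because $p = \Theta(n^{-1/2})$ is the critical density: there are $\exp(\Theta(n^{1/2}\log n))$ sets of size $\Theta(np)$ while the Chernoff bound for a pair of such sets is only $\exp(-\Theta(n^{1/2}))$, so a naive union bound with $(1+\varepsilon')$-concentration loses a logarithmic factor. The fix is exactly the two-regime device above — tight concentration where the expectations are large, and a deliberately lossy but still-sufficient bound where they are not — which is what forces $C$ to be large in terms of $\varepsilon'$ and is why this lemma is a \emph{variant} of, rather than a verbatim copy of, \cite[Lemma 3.3]{KrLeSu}; I would follow that argument essentially line by line.
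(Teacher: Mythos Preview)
The paper omits the proof of this lemma entirely, noting only that it is a variant of \cite[Lemma 3.3]{KrLeSu} and that ``the modifications are quite simple.'' Your reduction---replacing the quantifier over $G'$ by the deterministic inequality $e_{G(n,p)}(X,N_{G'}(X))\ge \deg_{G'}(X)$ and then bounding $e_{G(n,p)}(X,W)$ uniformly via Chernoff plus a union bound---is exactly the standard approach and is what the reference does; your proposal is therefore consistent with the paper's (implicit) argument.

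One small caution: your ``for all $W$'' reformulation, taken literally over all $r\in(0,1]$ simultaneously, amounts to $e_{G(n,p)}(X,W)\le |X||W|p/(1-\varepsilon')$ for \emph{every} $W$, which is false for tiny $W$ (take $W=\{w\}$ and $X=N_G(w)$). This is harmless because in the lemma $r$ is a fixed constant chosen before $n\to\infty$ (and in the two applications in Lemma~\ref{lemma_findingspecialvertex} one has $r\ge 1/8-\varepsilon/20$), so the relevant $W$ satisfy $|W|<(1-\varepsilon')rn=\Theta(n)$ while the max-degree bound already forces $|W|\ge r|X|/2$. Within that window your two-regime scheme does go through; just be sure to phrase the statement you union-bound over with $r$ fixed, not as a uniform-in-$r$ edge-density estimate.
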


%%
%%  Expansion of large minimum degree subgraph
%%
Next lemma establishes the expansion property of subgraphs of random graphs
with large minimum degree. The same lemma appeared in \cite[Lemma 3.4]{KrLeSu}.
\begin{LEMMA} \label{lemma_largemindegreesubgraphexpands}
If $p = C n^{-1/2}$ for some constant $C> 0$, and $\varepsilon' > 0$, then a.a.s.
every subgraph $G' \subset G(n,p)$ with minimum degree at
least $\varepsilon' np$ satisfies the following expansion property.
For all $X \subset V$ with $|X| \leq \frac{1}{80}\varepsilon' n$, $|N_{G'}(X)\backslash X| \geq
2|X|$.
\end{LEMMA}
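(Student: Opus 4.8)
The plan is to prove the expansion property not by a union bound over the (super-exponentially many) subgraphs $G'$, but by observing that the failure of expansion for \emph{any} subgraph $G'$ with $\delta(G') \ge \varepsilon' np$ forces $G(n,p)$ itself to contain an atypically dense small set, and then showing this is $o(1)$-unlikely. So suppose $G' \subset G(n,p)$ has $\delta(G') \ge \varepsilon' np$ and there is a set $X$ with $a := |X| \le \frac{1}{80}\varepsilon' n$ but $|N_{G'}(X) \setminus X| < 2a$. Put $Z = N_{G'}(X) \setminus X$ and $W = X \cup Z$, so $|W| < 3a$ and, crucially, $N_{G'}(x) \subseteq W$ for every $x \in X$. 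Since the edges of $G'$ are edges of $G(n,p)$, the number of edges of $G(n,p)$ that lie inside $W$ and meet $X$ is at least $\frac12\sum_{x \in X}\deg_{G'}(x) \ge \frac{\varepsilon' np}{2}\,a$. (As each $x$ has at least $\varepsilon' np$ neighbors inside $W \setminus \{x\}$, we also see $3a > \varepsilon' np$, so only $a$ in the range $[\varepsilon' np/3,\ \varepsilon' n/80]$ can occur, though the calculation below is valid for all $a$.) Thus it suffices to show: a.a.s.\ $G(n,p)$ contains no set $X$ of some size $a \le \frac{1}{80}\varepsilon' n$ together with a disjoint set $Z$, $|Z| < 2a$, such that $G(n,p)[X \cup Z]$ has at least $\frac{\varepsilon' np}{2}a$ edges meeting $X$.

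For a fixed such pair $(X,Z)$, the number of pairs lying inside $W = X \cup Z$ and meeting $X$ is at most $a(|W|-1) < 3a^2$, so the number of these present in $G(n,p)$ is dominated by $\Bi(3a^2,p)$. Writing $s = \lceil \frac{\varepsilon' np}{2}a \rceil$, the first-moment bound gives
\[
P\big(\Bi(3a^2,p) \ge s\big) \le \binom{3a^2}{s}p^s \le \Big(\frac{3ea^2 p}{s}\Big)^{s} \le \Big(\frac{6ea}{\varepsilon' n}\Big)^{s} \le \Big(\frac{6e}{80}\Big)^{s} \le 4^{-s} \le 4^{-\varepsilon' npa/2},
\]
where we used $a \le \frac{1}{80}\varepsilon' n$ and $6e < 20$. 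The number of choices of the pair $(X,Z)$ is at most $\binom{n}{a}\sum_{z < 2a}\binom{n}{z} \le 2a\, n^{3a} \le e^{4a\ln n}$, so by the union bound the probability that a bad configuration exists for some $a$ is at most
\[
\sum_{a \ge 1} e^{4a\ln n}\cdot 4^{-\varepsilon' npa/2} = \sum_{a \ge 1}\exp\!\Big( 4a\ln n - \tfrac{\ln 4}{2}\,\varepsilon' C\sqrt{n}\; a \Big).
\]
Since $p = Cn^{-1/2}$, we have $np = C\sqrt n \gg \ln n$, so for $n$ large the bracket is at most $-\frac{\ln 4}{4}\varepsilon' C\sqrt n\, a$ uniformly in $a$, and the sum is a convergent geometric series of value $o(1)$. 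This establishes the lemma.

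The point worth highlighting is not any single step but the balance of the two estimates: the a priori number of candidate bad pairs $(X,Z)$ is only $n^{O(a)} = e^{O(a\ln n)}$, while packing at least $\frac{\varepsilon'}{2}npa$ edges into a set of size below $3a$ costs probability $\exp(-\Omega(npa)) = \exp(-\Omega(a\sqrt n))$, and $\sqrt n$ comfortably dominates $\ln n$; this is exactly where the hypothesis $p = Cn^{-1/2}$ (so that $np \to \infty$ polynomially) is used, and why no lower bound on $C$ is needed. The constant $\frac{1}{80}$ plays only a cosmetic role, ensuring the base $6ea/(\varepsilon' n)$ in the first-moment bound is safely below $1$ (here below $\tfrac14$); any sufficiently small absolute constant would work, and the factor $2$ in ``$2|X|$'' is likewise inessential. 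One should also note explicitly that, as the argument only ever speaks about edge counts in $G(n,p)$, it is genuinely a statement about $G(n,p)$ alone, with the subgraph $G'$ entering solely to supply the set $X$, the set $Z$, and the edge lower bound — so the $o(1)$ probability above is over the choice of $G(n,p)$, and no further union bound is required.
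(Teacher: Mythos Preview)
Your proof is correct. The paper itself does not give a proof of this lemma; it simply cites \cite[Lemma~3.4]{KrLeSu} and states that ``the modifications are quite simple, [so] we will omit proofs of these lemmas.'' Your argument is exactly the standard one that sits behind such a citation: reduce the existence of a non-expanding set in \emph{some} high-minimum-degree subgraph to the existence in $G(n,p)$ of a set $W$ of size below $3|X|$ carrying at least $\tfrac{\varepsilon'}{2}np|X|$ edges incident to $X$, and kill this by a first-moment/union bound over the at most $n^{O(|X|)}$ choices of $(X,Z)$. The balance you identify---$\exp(-\Omega(np\cdot a))$ beating $n^{O(a)}$ because $np = C\sqrt{n} \gg \log n$---is precisely the content of the lemma, and your remark that no lower bound on $C$ is needed, and that $\tfrac{1}{80}$ and the expansion factor $2$ are cosmetic, is accurate. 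There is nothing to correct.
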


%%
%%  Existence of vertex with lots of edges in 2nd nbd
%%
The final lemma is the key ingredient in the proof of Theorem \ref{thm_mainthmpart1}. It
asserts the existence of a vertex with many edges in
its 2nd neighborhood.
\begin{LEMMA} \label{lemma_findingspecialvertex}
For every $\varepsilon \in (0, \frac{1}{16})$, there exists a constant $C_0$ such that the following holds.
If $p = Cn^{-1/2}$ for some constant $C \ge C_0$, then $G(n,p)$
a.a.s.~satisfies the following. Every subgraph
$G' \subset G(n,p)$ with more than $(\frac{1}{2}+\varepsilon)\frac{n^2p}{2}$ edges contains a vertex $w$ such that
$e(N^{(2)}_{G'}(w)) \geq \frac{\varepsilon}{16} n^2p$.
\end{LEMMA}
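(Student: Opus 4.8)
The plan is to prove the stronger quantitative statement that $\frac{1}{n}\sum_{w \in V}e\big(N^{(2)}_{G'}(w)\big) \geq \frac{\varepsilon}{8}n^2p$, which immediately produces a vertex $w$ with $e\big(N^{(2)}_{G'}(w)\big) \geq \frac{\varepsilon}{8}n^2 p > \frac{\varepsilon}{16}n^2p$. Set $\varepsilon' = \varepsilon/8$. First I would record the two a.a.s.\ properties of $G(n,p)$ that I condition on: that $\Delta(G(n,p)) \leq (1+\varepsilon')np$ (Theorem~\ref{thm_Chernoff} and a union bound over the $n$ vertices, valid since $np = Cn^{1/2} \gg \log n$), and the conclusion of Lemma~\ref{lemma_nbdgrowth} for the parameter $\varepsilon'$, which is already phrased uniformly over all subgraphs $G' \subseteq G(n,p)$. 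From here on the argument is deterministic: $G(n,p)$ is a fixed graph with these two properties, $G' \subseteq G(n,p)$ is an arbitrary subgraph with $e(G') > (\tfrac12+\varepsilon)\tfrac{n^2p}{2} = (\tfrac14+\tfrac\varepsilon2)n^2p$, and all neighbourhoods are taken in $G'$.

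The starting point is that, for every vertex $w$, the complement $V \setminus N^{(2)}_{G'}(w)$ equals $\big(\{w\}\cup N(w)\big)$ together with a subset of $V \setminus N(N(w))$ (the set of vertices at distance at least $3$ from $w$, since $N(N(w)) \subseteq \{v : \mathrm{dist}_{G'}(w,v)\le 2\}$). Because $|N(w)| \le (1+\varepsilon')np = O(n^{1/2})$ and $\Delta(G') = O(n^{1/2})$, the first piece has total degree $O(n)$, so
\[
e\big(N^{(2)}_{G'}(w)\big) \;\geq\; e(G') - \deg_{G'}\!\big(V \setminus N(N(w))\big) - O(n).
\]
Summing over $w$ and using the identity
\[
\sum_{w \in V} \deg_{G'}\!\big(V \setminus N(N(w))\big) \;=\; \sum_{v \in V} \deg_{G'}(v)\,\big(n - |N(N(v))|\big),
\]
which holds because $v \notin N(N(w)) \iff N(w) \cap N(v) = \emptyset \iff w \notin N(N(v))$, reduces the task to an upper bound on $\sum_v \deg_{G'}(v)\big(n-|N(N(v))|\big)$. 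For each $v$ with $\deg_{G'}(v) \ge \varepsilon' np$ I would apply Lemma~\ref{lemma_nbdgrowth} to $X = N_{G'}(v)$ with the ratio $r_v = \min\{1,\ \deg_{G'}(N(v))/(\deg_{G'}(v)\,np)\}$ (the hypotheses hold since $|X| = \deg_{G'}(v) \ge \varepsilon' np$ and $\deg_{G'}(X) = \deg_{G'}(N(v)) \ge |X| r_v np$), obtaining $|N(N(v))| \ge (1-\varepsilon')r_v n$; the vertices with $\deg_{G'}(v) < \varepsilon' np$ contribute at most $\varepsilon' n^3p$ in total, of lower order. Hence the sum is at most $n\big(2e(G') - (1-\varepsilon')\sum_v \deg_{G'}(v)\,r_v\big) + \varepsilon'n^3p$.

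The gain now comes from $e(G')$ being large. Since $\Delta(G') \le (1+\varepsilon')np$ we have $r_v \ge \deg_{G'}(N(v))/(\deg_{G'}(v)np) - \varepsilon'$, and therefore, by the Cauchy--Schwarz inequality,
\[
\sum_{v} \deg_{G'}(v)\,r_v \;\geq\; \frac{1}{np}\sum_v \deg_{G'}(v)^2 - 2\varepsilon' e(G') \;\geq\; \frac{(2e(G'))^2}{n^2p} - 2\varepsilon' e(G').
\]
Substituting this, and using $2e(G')/(n^2p) > \tfrac12 + \varepsilon$ together with $n^3p < 4ne(G')$, one gets $\sum_v \deg_{G'}(v)\big(n-|N(N(v))|\big) \le (1 - 2\varepsilon + 8\varepsilon')\,n\,e(G') = (1-\varepsilon)\,n\,e(G')$ for the choice $\varepsilon' = \varepsilon/8$. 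Plugging this back into the sum of the first displayed bounds over all $w$ yields $\sum_{w}e\big(N^{(2)}_{G'}(w)\big) \ge \varepsilon\, n\, e(G') - O(n^2) \ge \frac{\varepsilon}{8}n^3p$ for $n$ large (using $e(G') > \tfrac14 n^2p$ and $O(n^2) = o(n^3p)$), and dividing by $n$ finishes the argument.

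I expect the only real work to be bookkeeping rather than ideas: verifying the double-counting identity carefully (in particular that $\{v : \mathrm{dist}_{G'}(w,v) \ge 3\} \subseteq V \setminus N(N(w))$, and that the ``boundary'' $\{w\} \cup N(w)$ really is negligible, which is exactly where $p = Cn^{-1/2}$ enters, forcing all degrees to be $O(n^{1/2})$), and tracking the various $\varepsilon'$-sized error terms so the final constant stays above $\varepsilon/16$ — there is a comfortable factor-$2$ of slack. The one conceptual point, which I would isolate first, is that replacing $e(G')$ by $(\tfrac14 + \tfrac\varepsilon2)n^2p$ turns the elementary inequality $\sum_v \deg_{G'}(v)^2 \ge (2e(G'))^2/n$ into the statement that, in a degree-weighted average, a vertex's second neighbourhood misses only a $(1-\varepsilon)$-fraction of the graph's edge-incidences.
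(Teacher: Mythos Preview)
Your averaging argument is correct and takes a genuinely different route from the paper's proof. The paper proceeds by a two-stage construction with a case split: it first locates a vertex $v_0$ with at least $\tfrac{\varepsilon}{2}np$ neighbours of degree $\ge (\tfrac12+\tfrac{\varepsilon}{2})np$, so that Lemma~\ref{lemma_nbdgrowth} forces $|N^{(2)}_{G'}(v_0)| \ge (\tfrac12+\tfrac{\varepsilon}{8})n$; if $e(N^{(2)}_{G'}(v_0))$ is already large we are done, and otherwise it fixes $X \subset N^{(2)}_{G'}(v_0)$ of size just over $n/2$, sets $Y = V\setminus X$, deletes the edges inside $X$, finds a second candidate $v_1 \in Y$ whose second neighbourhood covers a $(2r - O(\varepsilon))$-fraction of $Y$ (where $r \approx e(X,Y)/n^2p$), and finally shows by an edge-count that $e(N^{(2)}_{G'}(v_1)) < \tfrac{\varepsilon}{16}n^2p$ would contradict $e(G') > (\tfrac12+\varepsilon)\tfrac{n^2p}{2}$. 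Your approach replaces this case analysis with a single global inequality via the symmetry $v \notin N(N(w)) \iff w \notin N(N(v))$ and Cauchy--Schwarz on $\sum_v \deg_{G'}(v)^2$; this is cleaner and in fact yields the stronger conclusion that the \emph{average} of $e(N^{(2)}_{G'}(w))$ over all $w$ exceeds $\tfrac{\varepsilon}{16}n^2p$. One minor bookkeeping point: in the line ``the sum is at most $n\big(2e(G') - (1-\varepsilon')\sum_v \deg_{G'}(v)\,r_v\big) + \varepsilon'n^3p$'' the inner sum must range only over the high-degree vertices (those with $\deg_{G'}(v) \ge \varepsilon' np$), since Lemma~\ref{lemma_nbdgrowth} does not apply to the rest; when you then pass to $\sum_v \deg_{G'}(v)^2$ over all $v$ you incur an additional $O(\varepsilon' n^2 p)$ loss from the low-degree vertices, which pushes the coefficient of $\varepsilon'$ from $8$ to roughly $11$ or $12$ --- but as you anticipate, the slack against the target $\tfrac{\varepsilon}{16}$ absorbs this comfortably.
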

\begin{proof}
Let $G=G(n,p)$ and $G'$ be a subgraph satisfying $e(G') \ge (\frac{1}{2}+\varepsilon)\frac{n^2p}{2}$.
By Chernoff's inequality (Theorem~\ref{thm_Chernoff}), we may assume that $\Delta(G') \le \Delta(G) \le (1 + \varepsilon)np$.
Let $B$ be the collection of all the vertices
which have degree at least $(\frac{1}{2} + \frac{\varepsilon}{2})np$ in $G'$.
Then, by the inequality
\begin{align*}
\left(\frac{1}{2} + \varepsilon\right)n^2p &\leq 2 e(G') = \deg_{G'}(V)
 = \deg_{G'}(B) + \deg_{G'}(V \setminus B)
\leq \deg_{G'}(B) + \left(\frac{1}{2} + \frac{\varepsilon}{2}\right)n^2p ,
\end{align*}
we have $\frac{\varepsilon}{2}n^2p \le \deg_{G'}(B) = \sum_{v \in V} \abs{ N_{G'}(v) \cap B }$. Therefore,
there should exist a vertex $v_0$ which satisfies $\abs{ N_{G'}(v_0) \cap B } \geq \frac{\varepsilon}{2} n p$.
Then $\deg_{G'}(N_{G'}(v_0) \cap B) \ge (\frac{1}{2} + \frac{\varepsilon}{2})np \abs{N_{G'}(v_0) \cap B}$, and
by Lemma \ref{lemma_nbdgrowth}, $\abs{ N_{G'}(N_{G'}(v_0) \cap B) } \geq (\frac{1}{2} + \frac{\varepsilon}{4})n$
for large enough $C_0$. Thus,
\[ \abs{ N^{(2)}_{G'}(v_0) } \geq \abs{N_{G'}(N_{G'}(v_0))} - \abs{N_{G'}(v_0)} - \abs{\{v_0\}} \geq \left(\frac{1}{2} + \frac{\varepsilon}{4}\right)n - (1 + \varepsilon)np - 1 \geq \left( \frac{1}{2} + \frac{\varepsilon}{8} \right)n \]
for large enough $n$. If $e(N^{(2)}_{G'}(v_0)) \geq \frac{\varepsilon}{16} n^2p$, then we have found the vertex $w = v_0$ as claimed, so assume otherwise.

\begin{figure}[h]
  \centering
  \begin{tabular}{ccc}
    %%
%% $N^{(2)}(v_0)$
%%

\begin{tikzpicture}
  \clip (-0.5,-1.5) rectangle (6.5,3);

  \draw [fill=black] (0,0) circle (0.5mm);
  \draw (0,-0.2) node {\footnotesize{$v_0$}};
  
  \draw [thick] (3.5,0) ellipse (2.3cm and 1cm);
  \draw (5.7,1) node {\footnotesize{$N^{(2)}(v_0)$}};

  \draw [thick] (3.2,0) ellipse (1.6cm and 0.7cm);
  \draw (5,0.3) node {\footnotesize{$X$}};
  
  \draw [thick] (1,2) ellipse (1cm and 0.5cm);
  \draw (0.1,2.5) node {\footnotesize{$B$}};
  
  \draw [thick] (0.8,2) ellipse (0.5cm and 0.25cm);
  \draw [<-] (1.3,2.2) -- (2.6,2.5);
  \draw (2.6,2.5) node [right] {\footnotesize{$N(v_0) \cap B$}};
  
  \draw (0,0) -- (1.2746,1.9213);
  \draw (0,0) -- (0.3014,2.0186);
  
  \draw (1.1,2.2) -- (4.99,0.7618);
  \draw (0.3114,1.9468) -- (1.2401,-0.186);
\end{tikzpicture} & \hspace{0.1in} & %%
%% $N^{(2)}(v_1)$
%%

\begin{tikzpicture}
  \clip (-0.5,-1.5) rectangle (7.5,3);

  \draw [fill=black] (5,1.8) circle (0.5mm);
  \draw (5.2,1.6) node {\footnotesize{$v_1$}};

  \draw [thick] (1,0.7) ellipse (1.2cm and 2cm);
  \draw (0.1,2.4) node {\footnotesize{$X$}};

  \draw [thick] (5,0.7) ellipse (1.2cm and 2cm);
  \draw (4.1,2.4) node {\footnotesize{$Y$}};

  \draw [thick] (5,0.05) ellipse (0.6cm and 1.2cm);
  \draw [<-] (5.5,-0.8) -- (6.1,-1);
  \draw (6.1,-1) node [right] {\footnotesize{$N^{(2)}(v_1)$}};

  \draw [thick] (1,0.05) ellipse (0.5cm and 0.9cm);
  \draw (0.4,0.9) node {\footnotesize{$B_X$}};

  \draw [thick] (1,0.3) ellipse (0.25cm and 0.5cm);
  \draw [<-] (0.95,0.85) -- (0.8,1.5);
  \draw (0.8,1.5) node [above,xshift=2mm] {\footnotesize{$N(v_1) \cap B_X$}};

  \draw (5,1.8) -- (1.0611,-0.1848);
  \draw (5,1.8) -- (0.9691, 0.7962);

  \draw (0.9704,-0.1965) -- (4.9289,-1.1415);
  \draw (0.9859,0.7992) -- (4.9662,1.2481);
\end{tikzpicture}
  \end{tabular}
  \caption{The second neighborhood of either $v_0$ or $v_1$ must contain many edges.}
\end{figure}
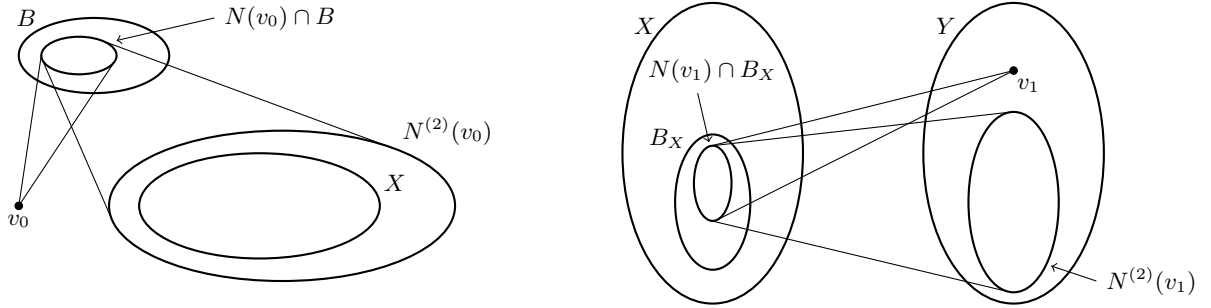

Let $X \subset N^{(2)}_{G'}(v_0)$ be such that $|X| = (\frac{1}{2} + \frac{\varepsilon}{80})n$ and $Y = V(G) \setminus X$
so that $\abs{ Y } = (\frac{1}{2} - \frac{\varepsilon}{80})n$.
Let $G''$ be the graph obtained from $G'$ by removing all the edges within $X$.
Then by assumption, we have
\[ e(G'') \geq e(G') - \frac{\varepsilon}{16} n^2p \geq \left(\frac{1}{2}+\frac{\varepsilon}{2} \right)\frac{n^2p}{2}. \]
By Chernoff's inequality (Theorem~\ref{thm_Chernoff}), a.a.s.~$e_{G}(Y) \le \frac{n^2p}{8}$ and $e_G(X,Y) \leq (1 + \frac{\varepsilon}{64}) \abs{ X } \abs{ Y } p$.
Therefore, we can find $r \in [0, \frac{1}{4} + \frac{\varepsilon}{64}]$ satisfying $e_{G''}(X,Y) = rn^2p$. Moreover,
\[ \frac{n^2p}{4} \le e(G'') = e_{G''}(X) + e_{G''}(X,Y) + e_{G''}(Y) \leq \frac{\varepsilon}{16} n^2p + rn^2p + \frac{n^2p}{8}, \]
and thus $r > 1/16 \ge \varepsilon$. Let $B_X \subset X$ be the collection of vertices in $X$ which have
at least $(2r - \frac{\varepsilon}{20})np$ neighbors in $Y$. Then the following inequality holds:
\begin{align*}
rn^2p &= e_{G''}(X,Y) = \sum_{v \in X} \abs{ N_{G''}(v) \cap Y } = \sum_{v \in X} \abs{ N_{G''}(v) } = \deg_{G''}(B_X) + \deg_{G''}(X \setminus B_X) \\
&\leq \deg_{G''}(B_X) + \left(2r - \frac{\varepsilon}{20}\right)np |X| = \deg_{G''}(B_X) + \left(2r - \frac{\varepsilon}{20}\right)\left(\frac{1}{2} + \frac{\varepsilon}{80}\right)n^2p.
\end{align*}
Subtracting $\left(r - \frac{\varepsilon}{40}\right)n^2p$ from each side gives $\frac{\varepsilon}{40}n^2p \leq \deg_{G''}(B_X) +  \frac{\varepsilon}{80}\left(2r - \frac{\varepsilon}{20}\right)n^2p$,
which implies that $\deg_{G''}(B_X) \geq \frac{\varepsilon}{80}n^2p$ because of the restriction $r \le \frac{3}{8}$ and $\varepsilon < \frac{1}{16}$.
Moreover, since
\[ \sum_{y \in Y} \abs{ N_{G''}(y) \cap B_X } = \deg_{G''}(B_X) \geq \frac{\varepsilon}{80}n^2p, \]
there should exist a vertex $v_1 \in Y$ which satisfies $\abs{ N_{G''}(v_1) \cap B_X } \geq \frac{\varepsilon}{80}n p$.
Then by Lemma \ref{lemma_nbdgrowth},
$\abs{ N_{G''}(N_{G''}(v_1) \cap B_X) } \geq (1 - \frac{\varepsilon}{20})(2r - \frac{\varepsilon}{20})n \geq (2r - \frac{\varepsilon}{10})n$
for large enough $C_0$.
Moreover, $N_{G''}(N_{G''}(v_1) \cap B_X) \subset N^{(2)}_{G''}(v_1) \cap Y$ since we removed all the edges within $X$. Thus,
\begin{align*}
 \abs{ N^{(2)}_{G''}(v_1) \cap Y } &\geq \abs{N_{G''}(N_{G''}(v_1) \cap B_X)} - \abs{N_{G''}(v_1)} - \abs{\{v_1\}} \\
&\geq \left(2r - \frac{\varepsilon}{10}\right)n - (1+\varepsilon)np - 1 \geq \left(2r - \frac{\varepsilon}{8} \right)n
\end{align*}
for large enough $n$. If $e(N^{(2)}_{G''}(v_1)) \geq \frac{\varepsilon}{16} n^2p$ then we have found the vertex $w = v_1$ as required.
We claim that this should always be the case.
Otherwise, let $Y' \subset N^{(2)}_{G''}(v_1) \cap Y$ be a set of size $\abs{Y'} = (2r - \frac{\varepsilon}{8})n$ and
note that $\abs{Y \setminus Y'} \geq (\frac{1}{2} - \frac{\varepsilon}{80})n - (2r - \frac{\varepsilon}{8})n \geq \frac{\varepsilon}{32}n$.
Since both $Y \setminus Y'$ and $Y'$ are sets of linear size, we can use Chernoff's inequality (Theorem~\ref{thm_Chernoff}) to bound $e_{G''}(Y \setminus Y')$ and $e_{G''}(Y \setminus Y', Y')$
to get:
\begin{align*}
\left(\frac{1}{2}+\frac{\varepsilon}{2}\right)\frac{n^2p}{2} &\le e_{G''}(V) = e_{G''}(X) + e_{G''}(X,Y) + e_{G''}(Y) \le 0 + rn^2p + e_{G''}(Y) \\
&= rn^2p + e_{G''}(Y \setminus Y') + e_{G''}(Y \setminus Y', Y') + e_{G''}(Y') \\
&\le rn^2p + \left(1 + \frac{\varepsilon}{16} \right) \abs{Y \setminus Y'}^2\frac{p}{2} + \left(1 + \frac{\varepsilon}{16} \right) \abs{Y \setminus Y'}\abs{Y'}p + \frac{\varepsilon}{16} n^2p \\
&\le \frac{\varepsilon}{16} n^2p + rn^2p + \left(1 + \frac{\varepsilon}{16} \right) \left(\abs{Y}^2 - \abs{Y'}^2 \right)\frac{p}{2} \\
&\le  \frac{\varepsilon}{16} n^2p + rn^2p + \left(1 + \frac{\varepsilon}{16} \right) \left(\frac{1}{4} - \left(2r - \frac{\varepsilon}{8}\right)^2 \right)\frac{n^2p}{2} .
\end{align*}
Divide each side by $\frac{n^2p}{2}$ to get
\begin{align*}
\frac{1}{2}+\frac{\varepsilon}{2}
 &\le \frac{\varepsilon}{8} + 2r + \left(1 + \frac{\varepsilon}{16} \right) \left(\frac{1}{4} - \left(2r - \frac{\varepsilon}{8}\right)^2 \right) \le \frac{\varepsilon}{8} + 2r + \left(\frac{1}{4} - \left(2r - \frac{\varepsilon}{8}\right)^2 \right) + \frac{\varepsilon}{16}, \\
&\le \frac{3\varepsilon}{16} + 2r + \left(\frac{1}{4} - 4r^2 + \frac{r\varepsilon}{2}- \frac{\varepsilon^2}{64} \right) \le \frac{3\varepsilon}{16} + 2r + \frac{1}{4} - 4r^2 + \frac{3\varepsilon}{16}- \frac{\varepsilon^2}{64},
\end{align*}
which when rearranged gives,
$ \left( \frac{1}{2} - 2r \right)^2 + \frac{\varepsilon}{8} + \frac{\varepsilon^2}{64} \le 0$; this is a contradiction, since $\varepsilon > 0$. Therefore, either the second neighborhood of $v_0$ or $v_1$ should have had at least $\frac{\varepsilon}{16} n^2p$ edges inside it.
\end{proof}

%%
%%  Proof of existence of short cycles
%%
Now we are ready to show the existence of short cycles.
\begin{proof}[Proof of Theorem \ref{thm_mainthmpart1}]
By Proposition \ref{prop_probrestriction} we may assume that $p = Cn^{-1/2}$ for some constant $C$ to
be chosen later, and prove only this case.
Moreover, we may assume that $\varepsilon \le \frac{1}{16}$, since for larger $\varepsilon$, the statement follows from the case $\varepsilon = \frac{1}{16}$.
Let $G = G(n,p)$ and $G'$ be a subgraph satisfying $e(G') \geq (\frac{1}{2} + \varepsilon)\frac{n^2p}{2}$.
By Chernoff's inequality (Theorem~\ref{thm_Chernoff}), we know that a.a.s.~every pair of vertices has
codegree less than $(\log n) np^2 = C^2\log n$ in $G$, thus
throughout the proof we will assume that this estimate holds.
We can find cycles of length 3 by using Haxell, Kohayakawa, and {\L}uczak's Theorem \ref{thm_resilientsmallcycle}
and cycles of length 4 by Bondy and Simonovits' Theorem \ref{thm_bondysimonovits} by choosing $C = C(\varepsilon)$ to be large enough.

To find a cycle of length $t$ for some $5 \leq t \leq \frac{\varepsilon}{25600}n$, first apply Lemma \ref{lemma_findingspecialvertex} to
find a vertex $w$ which satisfies $e(N^{(2)}_{G'}(w)) \ge \frac{\varepsilon}{16} n^2p$. Then by Lemma \ref{lemma_findingmindegree},
there exists a subset $Z \subset N^{(2)}_{G'}(w)$ such that $G'[Z]$ has minimum degree at least $\frac{\varepsilon}{16}np$.
Pick an arbitrary vertex $w_2 \in Z$, and pick a vertex $w_1 \in N_{G'}(w)$ so that $w w_1 w_2$ is a path of length 2
in $G'$. Then remove all the neighborhoods of $w_1$ in $Z$ except $w_2$ and let $Z_1$
be the resulting set. Since the codegree of every pair of vertices
is less than $C^2\log n$, $G'[Z_1]$ will be a graph of minimum degree at least
$\frac{\varepsilon}{16} np - C^2 \log n$, which is at least $\frac{\varepsilon}{32}np$ for large enough $n$. Thus by Lemma \ref{lemma_largemindegreesubgraphexpands},
the graph $G'[Z_1]$ has an expansion property; namely, every set $X \subset V$ of size
 at most $|X| \leq \frac{\varepsilon}{25600}n$ satisfies $|N_{G'}(X)\backslash X| \geq
2|X|$. By Lemma \ref{lemma_posarotation}, we can find
a path of length $\frac{\varepsilon}{25600}n$ in $G'$ which has $w_2$ as an endpoint. Call this path
$w_2x_1x_2\ldots x_{\varepsilon n/25600}$. Note that by the definition of $Z_1$, 
for each $x_s$ in this path, there exists a vertex
$x_s'$ in $N_{G'}(w)$ that is not $w_1$ which forms a path $wx_s'x_s$ in $G'$.
Then $ww_1w_2x_1 \ldots x_{s} x_{s}'w$ forms a cycle of length $s+4$ in $G'$.
Thus, we have found cycles of length $t$ for all $5 \leq t \leq \frac{\varepsilon}{25600}n$.
\end{proof}

\bibliographystyle{plain}

\end{document}